\let\equation=\gather
\let\endequation=\endgather
\numberwithin{equation}{section}
\renewcommand*{\@fnsymbol}[1]{\ensuremath{\ifcase#1\or 1\or 2\or
   3\else\@ctrerr\fi}}
\newtheorem{theorem}{Theorem}[section]
\newtheorem{lemma}[theorem]{Lemma}
\newtheorem{proposition}[theorem]{Proposition}
\theoremstyle{definition}
\theoremstyle{remark}
\newtheorem{remark}[theorem]{Remark}
\newcounter{assum}
\newenvironment{assum}[1][]{\refstepcounter{assum}\equation\tag{\ensuremath{\mathrm{A}\theassum#1}}}{\endequation}
\DeclareMathOperator*{\esssup}{esssup}
\DeclareMathOperator*{\essinf}{essinf}
\renewcommand{\S}{{S^{d-1}}}
\newcommand{\R}{\mathbb{R}}
\newcommand{\X}{{\mathbb{R}^d}}
\newcommand{\N}{\mathbb{N}}
\newcommand{\eps}{\varepsilon}
\newcommand{\la}{\lambda}
\newcommand{\La}{\Lambda}
\newcommand{\kl}{\varkappa_{{\ell}}}
\newcommand{\kn}{\varkappa_{{n\ell}}}
\newcommand{\Tau}{\Upsilon} 
\newcommand{\x}{\mathcal{X}} 
\newcommand{\K}{\mathscr{K}}
\newcommand{\Tauout}{\mathscr{O}}
\newcommand{\Tauin}{\mathscr{C}}
\newcommand{\1}{1\!\!1}
\newcommand{\m}{{\mathfrak{m}}}
\newcommand{\A}{{\mathfrak{a}}}
\newcommand{\dist}{\mathrm{dist}\,} 
\newcommand{\inter}{{\mathrm{int}}}    
\newcommand{\Buc}{C_{ub}(\X)} 
\newcommand{\M}{{\mathcal{M}_\theta}(\R)}
\newcommand{\Utheta}{{C_\theta}}
\newcommand{\Ltheta}{{E^+_\theta}}
\newcommand{\Xinf}{\mathcal{X}_{\infty}}
\newcommand{\tXinf}{\widetilde{\mathcal{X}}_{\infty}}
\newcommand{\xt}{\mathcal{X}_T}
\newcommand{\locun}{\xRightarrow{\,\mathrm{loc}\ }}
\DeclareFontFamily{U}{mathx}{\hyphenchar\font45}
\DeclareFontShape{U}{mathx}{m}{n}{
      <5> <6> <7> <8> <9> <10>
      <10.95> <12> <14.4> <17.28> <20.74> <24.88>
      mathx10
      }{}
\DeclareSymbolFont{mathx}{U}{mathx}{m}{n}
\DeclareMathAccent{\widecheck}{0}{mathx}{"71} 
\title{Doubly nonlocal {F}isher--{KPP} equation: Front~propagation}
\author{Dmitri Finkelshtein\thanks{Department of Mathematics,
Swansea University, Singleton Park, Swansea SA2 8PP, U.K. ({\tt d.l.finkelshtein@swansea.ac.uk}).} \and Yuri Kondratiev\thanks{Fakult\"{a}t
f\"{u}r Mathematik, Universit\"{a}t Bielefeld, Postfach 110 131, 33501 Bielefeld,
Germany ({\tt kondrat@math.uni-bielefeld.de}).}  \and Pasha Tkachov\thanks{Gran Sasso Science Institute, Viale Francesco Crispi, 7, 67100 L'Aquila AQ, Italy ({\tt pasha.tkachov@gssi.it}).}}
\begin{document}
\maketitle

\begin{abstract}
We study propagation over $\X$ of the solution to a nonlocal nonlinear equation with anisotropic kernels, which can be interpretted as a doubly nonlocal reaction-diffusion  equation of the Fisher--KPP-type. 
We prove that if the kernel of the nonlocal diffusion is exponentially integrable in a direction and if the initial condition decays in this direction faster than any exponential function, then the solution propagates at most linearly in time in that direction. Moreover, if both the kernel and the initial condition have the above properties in any direction (being, in general, anisotropic), then we prove linear in time propagation of the corresponding solution over~$\X$. 

\textbf{Keywords:} nonlocal diffusion, Fisher--KPP equation, nonlocal nonlinearity, long-time behavior, front propagation, anisotropic kernels, integral equation

\textbf{2010 Mathematics Subject Classification:} 35K55, 35K57, 35B40 

\end{abstract}

\section{Introduction} 

We will study front propagation of solutions to the equation
\begin{equation}
\begin{aligned}
\dfrac{\partial u}{\partial t}(x,t)&=\varkappa^+ \int_{\X }a^+  (x-y)u(y,t)dy-m u(x,t)  -u(x,t)\, G\bigl(u(x,t)\bigr),\\
G\bigl(u(x,t)\bigr)&:=\kl u(x,t) + \kn \int_{\X }a^- (x-y)u(y,t)dy.
\end{aligned}
\label{eq:basic}
\end{equation}
Here $d\in\N$; $\varkappa^+, m>0$ and $\kl, \kn\geq0$  are constants, such that 
\begin{equation}\label{eq:nondegencomp}
      \varkappa^- := \kl+\kn > 0;
\end{equation}    
the kernels  $0\leq a^\pm \in L^{1}(\X)$ are probability densities, i.e. $\int_{\X }a^\pm (y)dy=1$.

For the case of the local nonlinearity in \eqref{eq:basic}, when $\kn=0$, the equation \eqref{eq:basic}  was considered, in particular, in \cite{CD2007,Yag2009,AGT2012,Gar2011,LSW2010,SLW2011,CDM2008,Sch1980,ZLW2012,SZ2010}. For a nonlocal nonlinearity and, especially, for the case $\kl=0$ in \eqref{eq:basic}, see e.g. \cite{Dur1988,FM2004,FKK2011a,FKKozK2014,PS2005,FKMT2017,FT2017c,YY2013}. For details, see the introduction to \cite{FKT100-1} and also the comments below.

The present paper is a continuation of \cite{FKT100-1} and \cite{FKT100-2}; they all are based on our unpublished preprint \cite{FKT2015} and thesis \cite{Tka2017}.

By a solution to \eqref{eq:basic} on $[0,T)$, $T\leq \infty$, we will understand the so-called classical solution, that is a continuous mapping from $[0,T)$ to the space $E:=L^{\infty}(\X)$ which is continuously differentiable (in the sense of the $\esssup$-norm in~$E$) in $t\in(0,T)$, and satisfies \eqref{eq:basic}. We denote by $\x_\infty$ the vector space of all continuous mappings from $\R_+$ to $E$. 

By~\cite[Theorem 2.2]{FT2017a}, for any $0\leq u_0\in E$ and for any $T>0$, there exists a unique classical solution $u$ to \eqref{eq:basic} on $[0,T)$. In particular, $u\in\x_\infty$ is a unique classical solution to \eqref{eq:basic} on $\R_+:=[0,\infty)$. 

Moreover, by \cite{FT2017a}, if $u_0$ belongs to either of spaces $C_b(\X)$ or $\Buc$ of bounded continuous or, respectively, bounded uniformly continuous functions on $\X$ with $\sup$-norm, then $u(\cdot,t)$ belongs to the same space for all $t>0$; cf.~\ref{eq:QBtheta_subset_Btheta} in Theorem~\ref{thm:Qholds} below.

We will assume in the sequel, that
\begin{assum}\label{as:beta}
\varkappa^+ >m.
\end{assum}
Under \eqref{as:beta}, the equation \eqref{eq:basic} has two constant stationary solutions: $u\equiv0$ and $u\equiv\theta$, where
\begin{equation}\label{theta_def}
  \theta:=\frac{\varkappa^+ -m}{\varkappa^- }>0.
\end{equation}
Moreover, one can then also rewrite the equation in a reaction-diffusion form
\[
  \dfrac{\partial u}{\partial t}(x,t)=\varkappa^+ \int_{\X } a^+  (x-y)\bigl( u(y,t)-u(x,t)\bigr) dy+u(x,t)\Bigl(\beta - G\bigl(u(x,t)\bigr)\Bigr),
\]
where $\beta=\varkappa^+-m>0$. We treat then \eqref{eq:basic} as a doubly nonlocal Fisher--KPP equation, see the introduction to \cite{FKT100-1} for details.

By \cite[Theorem 1.5, Remark~2.6]{FKT100-1}, the assumption
\begin{assum}\label{as:compar}
  \varkappa^{+}a^{+}(x)\geq \kn \theta a^{-}(x),\quad \text{a.a.}\ x\in\X
\end{assum}
is necessary and sufficient to have that the solution $u(\cdot,t)$ to \eqref{eq:basic} remains in the tube 
\begin{equation}\label{eq:tube}
      E^+_\theta:=\{u\in E\mid 0\leq u\leq\theta\}
\end{equation}
for all positive times $t>0$, given that $u_0\in E^+_\theta$. Here and in the sequel, we will understand all inequalities between functions from $E$ almost everywhere only.

Note that the assumption \eqref{as:compar} is redundant for the case of the local nonlinear part in \eqref{eq:basic}, i.e.~where $\kn=0$. The assumptions \eqref{as:beta}--\eqref{as:compar} ensure the comparison principle for the equation \eqref{eq:basic}, see Proposition~\ref{prop:fullcomp} below.

Through the paper we will assume also that
\begin{assum}\label{as:bdd}
    a^+\in L^\infty(\X).
  \end{assum}
Clearly, for the case $\kn>0$, \eqref{as:compar}--\eqref{as:bdd} imply $a^-\in L^\infty(\X)$.

Let $\S$ denote the unit sphere in $\X$ centered at the origin.
For a fixed $\xi\in\S $, we assume that
  \begin{assum}[_\xi]\label{as:firstmoment}
   \int_\X \lvert x\cdot \xi \rvert \, a^+(x)\,dx<\infty.
  \end{assum}
Here and below $x\cdot \xi$ denotes the scalar product in $\X$.
Under the assumption \eqref{as:firstmoment}, we define
\begin{equation}\label{firstdirmoment}
\m_\xi:=\varkappa^+ \int_\X x\cdot \xi \, \, a^+(x)\,dx.
\end{equation}

For the fixed $\xi\in\S $, we assume also, that
\begin{assum}[_\xi]\label{as:nondegdir}
    \begin{gathered}
    \text{there exist $r=r(\xi)\geq0$, $\rho=\rho(\xi)>0$, $\delta=\delta(\xi)>0$, such that}\\
    a^+(x)\geq\rho, \text{ for a.a.\! $x\in B_{\delta}(r\xi)$.}
    \end{gathered}
  \end{assum}
Here and below $B_\rho(y)$ denotes the ball in $\X$ of the radius $\rho>0$ centered at the point $y\in\X$.

For an arbitrary direction $\xi\in\S$, we define
\begin{equation}\label{aplusexpla}
  \A_\xi(\la):=\int_\X a^+(x) e^{\la x\cdot \xi}\,dx\in(0,\infty], \quad \la>0.
\end{equation}
We assume that, for the fixed $\xi\in\S$,
\begin{assum}[_\xi]\label{as:expintdir}
  \text{there exists} \ \mu=\mu(\xi)>0 \ \text{such that} \ \A_{\xi}(\mu)<\infty.
\end{assum}
Under condition \eqref{as:expintdir}, we consider, see \cite{FKT100-2} for details,
\begin{equation}\label{eq:abskernel}
      \sigma_\xi(a^+):=\sup\bigl\{\la>0\bigm\vert \A_{\xi}(\la)<\infty\bigr\}\in(0,\infty].
\end{equation}

The front propagation in a direction $\xi\in\S$ is deeply related to the minimal speed of traveling wave solutions in the direction $\xi$. By a (monotone) traveling wave solution to \eqref{eq:basic} in the fixed direction $\xi\in \S $, we will understand a solution of the form 
\begin{equation}\label{eq:deftrw}
      \begin{gathered}
      u(x,t)=\psi(x\cdot\xi-ct),  \quad t\geq0, \ \mathrm{a.a.}\ x\in\X, \\
      \psi(-\infty)=\theta, \qquad \psi(+\infty)=0,
      \end{gathered}
\end{equation}
where $c\in\R$ is called the speed of the wave and a decreasing and~right-continuous function $\psi$ is called the profile of the wave.

To formulate the main result, one needs the following Theorem, which we have proved in \cite{FKT100-2}.

\begin{theorem}[{{cf.~\cite[Theorems 1.1--1.3]{FKT100-2}}}]\label{thm:trwall}
Let \eqref{as:beta}--\eqref{as:compar} hold, and, for a fixed $\xi\in\S$, let \eqref{as:expintdir} hold. 
\begin{enumerate}
  \item Then there exists $c_*(\xi)\in\R$, such that, for any $c\geq c_*(\xi)$, there exists a profile $\psi=\psi_c$, such that \eqref{eq:deftrw} defines a solution to \eqref{eq:basic}; and for any $c<c_*(\xi)$ a traveling wave solution to \eqref{eq:basic} of the form \eqref{eq:deftrw} does not exist.
  \item Let, additionally, \eqref{as:bdd} and \eqref{as:firstmoment}--\eqref{as:nondegdir} hold. Then here exists a unique 
\begin{equation}\label{eq:lastar}
      \la_*=\la_*(\xi)\in(0,\infty), \qquad  \la_*(\xi)\leq \sigma_\xi(a^+),
\end{equation}
such that
\begin{equation}\label{eq:cstar}
    c_*(\xi)=\min_{\la>0}\frac{\varkappa^+\A_\xi(\la)-m}{\la}=\frac{\varkappa^+\A_\xi(\la_*)-m}{\la_*}> \m_\xi.
\end{equation}
Moreover, the abscissa of a profile $\psi_{*,\xi}$ corresponding to the traveling wave with the minimal speed $c_*(\xi)$ coincides with $\la_*(\xi)$, namely,
\begin{equation}\label{eq:absofprofile}
    \sup\biggl\{ \la>0 \biggm\vert \int_\R\psi_{*,\xi}(s)e^{\la s}\,ds<\infty\biggr\}=\la_*(\xi).
\end{equation}
\end{enumerate}
\end{theorem}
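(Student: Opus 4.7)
The plan is to attack the two parts separately. Part~(1) rests on a sub/supersolution construction combined with a monotone iteration, while part~(2) rests on the analysis of the characteristic function $F_\xi(\la):=(\varkappa^+\A_\xi(\la)-m)/\la$ matched with the exponential decay rate of the profile.

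For part~(1), I would reformulate the traveling wave ansatz \eqref{eq:deftrw} as a nonlocal stationary equation for the profile $\psi:\R\to[0,\theta]$ in the scalar variable $s=x\cdot\xi-ct$, with convolutions involving the one-dimensional marginals of $a^\pm$ along the direction $\xi$. For $c>0$ large enough that the equation $c\la=\varkappa^+\A_\xi(\la)-m$ admits a positive root $\la$, the function $\min(\theta,e^{-\la s})$ is a supersolution, and a truncated $(e^{-\la s}-qe^{-(\la+\eps)s})_+$ with $\eps,q>0$ chosen appropriately is a subsolution; the Lipschitz property of the nonlinearity on $[0,\theta]$ together with the comparison principle ensured by \eqref{as:compar} then yields a monotone profile via iteration or Schauder's fixed point theorem. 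Setting $c_*(\xi):=\inf\{c:\text{a profile exists}\}$, I would establish that the infimum is finite using \eqref{as:expintdir} and that it is attained by a limit argument along a minimising sequence $c_n\downarrow c_*(\xi)$: Helly's selection theorem applied to the monotone profiles (normalised at a fixed height) produces a pointwise limit, and uniform exponential tail bounds pass to the limit to deliver a genuine profile at speed $c_*(\xi)$.

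For part~(2), I would analyse $F_\xi$ on $(0,\sigma_\xi(a^+))$. The limit $F_\xi(\la)\to+\infty$ as $\la\to 0^+$ follows from $\A_\xi(0^+)=1$ and $\varkappa^+>m$; under \eqref{as:firstmoment}, a first-order expansion of $\A_\xi$ gives $F_\xi(\la)=\la^{-1}(\varkappa^+-m)+\m_\xi+o(1)$ as $\la\to 0^+$, so $F_\xi>\m_\xi$ near zero. Log-convexity of $\A_\xi$ yields strict convexity of the numerator $\varkappa^+\A_\xi(\la)-m$, which combined with the exponential lower bound on $\A_\xi$ supplied by \eqref{as:nondegdir} and \eqref{as:bdd} produces coercivity as $\la\uparrow\sigma_\xi(a^+)$ and hence a unique minimiser $\la_*\in(0,\sigma_\xi(a^+)]$. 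Matching with the construction in part~(1), which needs the characteristic equation $c\la=\varkappa^+\A_\xi(\la)-m$ to admit a positive real root, yields $c_*(\xi)=F_\xi(\la_*)>\m_\xi$.

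Finally, for the abscissa identity \eqref{eq:absofprofile}, one direction is immediate: the sub/supersolution sandwich from part~(1) forces $\psi_{*,\xi}(s)\leq Ce^{-\la_*(\xi)s}$, so the supremum in \eqref{eq:absofprofile} is at least $\la_*(\xi)$. For the reverse inequality, I would assume for contradiction that $\int_\R\psi_{*,\xi}(s)e^{\mu s}\,ds<\infty$ for some $\mu>\la_*(\xi)$, Laplace-transform the stationary profile equation, and extract an identity of the form $(\varkappa^+\A_\xi(\mu)-m-c_*(\xi)\mu)\,\widehat\psi_{*,\xi}(\mu)=N(\mu)\geq 0$, where $N(\mu)$ collects the non-negative nonlinear contributions. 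This would force $F_\xi(\mu)\leq c_*(\xi)=F_\xi(\la_*(\xi))$, contradicting the uniqueness of $\la_*(\xi)$ as minimiser. The main technical obstacle I anticipate is precisely this Laplace-transform step: justifying the interchange of the transform with the nonlocal nonlinearity $\psi G_\xi(\psi)$ under only the decay hypothesis requires either a bootstrap regularity argument for $\psi_{*,\xi}$ or a truncation-and-limit procedure, and it is here that most of the technical work in \cite{FKT100-2} presumably lives.
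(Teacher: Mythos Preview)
This theorem is not proved in the present paper at all: it is quoted from \cite{FKT100-2} (``which we have proved in \cite{FKT100-2}'') and used as a black box. There is therefore no proof here to compare your proposal against. Your sketch is a plausible outline of a standard sub/super-solution and Ikehara-type route to such results, and you are right that the real work sits in \cite{FKT100-2}.

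That said, two remarks on your outline are worth recording, since they bear on how the companion paper actually proceeds. First, your part~(1) only yields existence for $c$ with a positive root of $c\la=\varkappa^+\A_\xi(\la)-m$, i.e.\ for $c\ge\min_\la F_\xi(\la)$; defining $c_*(\xi)$ as the infimum of admissible speeds does not by itself give the converse, and the phrase ``matching with the construction in part~(1)'' does not supply non-existence below $\min_\la F_\xi(\la)$. In the present paper (see the proof of Proposition~\ref{cstarsareequal}) the link between $c_*(\xi)$ and Weinberger's spreading speed, together with Yagisita's abstract monotone-semiflow criterion \cite{Yag2009}, is what closes this gap; your route would instead need a direct Laplace/linearisation argument to rule out slow waves. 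Second, in your abscissa argument the ``one direction'' step is more delicate than stated: at the minimal speed the sandwich $\psi_{*,\xi}(s)\le Ce^{-\la_*(\xi)s}$ need not follow from the generic supersolution, since the characteristic equation has a double root at $\la_*(\xi)$ and the correct asymptotic may carry a polynomial prefactor. The bilateral Laplace transform you invoke for the other direction is also not defined for any $\mu>0$ because $\psi(-\infty)=\theta$; one has to work with a one-sided transform or an Ikehara-type argument on $\int_0^\infty\psi(s)e^{\mu s}\,ds$, which is indeed where the technical effort lies.
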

Note also that, under some additional technical assumptions, see \cite[Theorem~1.3]{FKT100-2}, the profile $\psi_c$ corresponding to a speed $c\geq c_*(\xi)$, $c\neq0$ is unique (up to a shift).

We start with a result that, for a solution $u(x,t)$ to \eqref{eq:basic}, the function $u(tx,t)$ converges (when $t\to\infty$) to $0$ uniformly on the hyperspace $\{x\cdot\xi\geq (1+\eps)c_*(\xi)\}$ for each $\eps>0$.

\begin{theorem}\label{thm:decayoutsidedirectional}
 Let \eqref{as:beta}--\eqref{as:bdd} hold. For a fixed $\xi\in\S$, suppose also that \eqref{as:firstmoment}--\eqref{as:expintdir} hold. Let $\la_*=\la_*(\xi)>0$ be the same as in \eqref{eq:lastar}--\eqref{eq:cstar}. Let $u_0\in E_\theta^+$ be such that 
\begin{equation}\label{eq:fastinitcond}
\lVert u_0\rVert_{\la_*,\xi}:=\esssup_{x\in\X} u_0(x) e^{\la_*  x\cdot\xi }<\infty.
\end{equation}
Let $u\in\x_\infty$ be the corresponding classical solution to \eqref{eq:basic} on $\R_+$. Let $\Tauout_\xi\subset\X$ be an open set, such that 
\begin{equation}\label{eq:condonfrontdir}
    \Tau_*(\xi):=\bigl\{ x\in\X \mid x\cdot\xi\leq c_*(\xi)\bigr\}\subset \Tauout_\xi,
\end{equation}
and  $\delta:=\dist (\Tau_*(\xi),\X\setminus\Tauout_\xi )>0$. Then the following estimate holds
\begin{equation}\label{supconvto0xi}
  \esssup_{x\notin t\Tauout_\xi } u(x,t)\leq \lVert u_0\rVert_{\la_*,\xi} e^{-\la_*\delta t}, \quad t>0.
\end{equation}
\end{theorem}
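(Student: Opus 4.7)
The proof builds an explicit exponential supersolution. Define
\[
\bar u(x,t):=\lVert u_0\rVert_{\la_*,\xi}\,e^{-\la_*(x\cdot\xi-c_*(\xi)t)},\qquad t\geq0,\ x\in\X.
\]
At $t=0$ the inequality $u_0(x)\leq\bar u(x,0)$ holds a.e., directly from the definition of $\lVert u_0\rVert_{\la_*,\xi}$ in \eqref{eq:fastinitcond}. Using \eqref{aplusexpla}, a direct calculation gives
\[
\varkappa^+\!\int_\X\! a^+(x-y)\bar u(y,t)\,dy=\varkappa^+\A_\xi(\la_*)\bar u(x,t),\qquad \partial_t\bar u(x,t)=\la_*c_*(\xi)\bar u(x,t),
\]
and invoking the identity $\la_*c_*(\xi)=\varkappa^+\A_\xi(\la_*)-m$ from \eqref{eq:cstar}, we see that $\bar u$ exactly solves the linearised equation $\partial_t v=\varkappa^+\!\int\! a^+(x-y)v(y,t)\,dy-mv$. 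Since $u\geq 0$ implies $\bar u G(\bar u)\geq 0$, $\bar u$ is in particular a supersolution of the full equation \eqref{eq:basic}.

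The main technical point is to upgrade this to the pointwise bound $u(x,t)\leq\bar u(x,t)$ a.e.; the chief obstacle is that $\bar u\notin E=L^\infty(\X)$, since it blows up as $x\cdot\xi\to-\infty$, so Proposition~\ref{prop:fullcomp} does not apply to it verbatim. I would handle this in one of two equivalent ways. First route: truncate by $\widetilde u:=\min\{\bar u,\theta\}\in E^+_\theta$; both $\bar u$ and the stationary solution $\theta$ are supersolutions of \eqref{eq:basic}, and the monotone structure of the kernel-based reaction term preserves this property for their minimum, after which Proposition~\ref{prop:fullcomp} yields $u\leq\widetilde u\leq\bar u$. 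Second (cleaner) route: observe that dropping the nonpositive term $-uG(u)$ makes $u$ automatically a subsolution of the \emph{linear} equation $\partial_t v=\varkappa^+\!\int\! a^+(x-y)v(y,t)\,dy-mv$, while $\bar u$ exactly solves it; then $f:=\bar u-u$ satisfies, via Duhamel, an integral inequality of the form
\[
e^{mt}f(x,t)\geq \bigl(\bar u(x,0)-u_0(x)\bigr)+\varkappa^+\!\int_0^t\!\int_\X\! a^+(x-y)e^{ms}f(y,s)\,dy\,ds,
\]
and Picard iteration, which converges because $a^+\ast\bar u=\A_\xi(\la_*)\bar u<\infty$, produces $f\geq0$.

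Once $u\leq\bar u$ is established, \eqref{supconvto0xi} is purely geometric. For $x\notin t\Tauout_\xi$ the rescaled point $t^{-1}x$ lies in $\X\setminus\Tauout_\xi$, so the standing distance assumption forces $\dist(t^{-1}x,\Tau_*(\xi))\geq\delta$. Because $\Tau_*(\xi)$ is the closed halfspace $\{y\cdot\xi\leq c_*(\xi)\}$, the distance from $t^{-1}x$ to it equals $t^{-1}x\cdot\xi-c_*(\xi)$, so $x\cdot\xi-c_*(\xi)t\geq\delta t$. Substituting into the pointwise bound,
\[
u(x,t)\leq\lVert u_0\rVert_{\la_*,\xi}\,e^{-\la_*(x\cdot\xi-c_*(\xi)t)}\leq\lVert u_0\rVert_{\la_*,\xi}\,e^{-\la_*\delta t},
\]
which is exactly \eqref{supconvto0xi}.
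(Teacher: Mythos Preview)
Your proof is correct, and your second route---bounding $u$ by the solution of the linearised equation via Duhamel and then by the explicit exponential $\bar u$---is exactly the paper's argument: the paper first notes $u\le w$ with $w$ the linear solution, and Proposition~\ref{prop:expdecay} computes $\lVert w(\cdot,t)\rVert_{\la_*,\xi}\le\lVert u_0\rVert_{\la_*,\xi}e^{p_*t}$ via the exponential series, which is precisely $w\le\bar u$. One caution on your first route: $\min\{\bar u,\theta\}$ is not in $\mathcal{U}_T$ (it fails to be $C^1$ in $t$ across the moving level set $\{\bar u=\theta\}$), so Proposition~\ref{prop:fullcomp} does not apply to it verbatim; since you already single out the linear-comparison route as cleaner, this is moot.
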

Here and below $tA:=\{tx\mid x\in A\}$ for a measurable $A\subset\X$.

Now we are going to formulate our results about the front propagation in different directions.
If, for a $\xi\in\S$, the assumption \eqref{as:expintdir} fails, i.e. if $\A_{\xi}(\la)=\infty$ for all $\la>0$, we will set $c_*(\xi):=\infty$. Consider the set
\begin{equation}\label{eq:condonfrontglobal}
    \Tau_*:=\bigcap_{\xi\in\S} \Tau_*(\xi)=\bigl\{ x\in\X \mid x\cdot\xi\leq c_*(\xi), \ \xi\in\S \bigr\}.
\end{equation}
Clearly, $\Tau_*$ is a closed convex subset of $\X$. In particular, if \eqref{as:expintdir} fails for all $\xi\in\S$, then $\Tau_*=\X$.

We assume the following modification of \eqref{as:firstmoment}:
\begin{equation}\label{firstglobalmoment}
  \int_\X |x| a^+(x)\,dx<\infty. \tag{A4}
\end{equation}
Clearly, \eqref{firstglobalmoment} yields that  \eqref{as:firstmoment} holds for all $\xi\in\S$. 
Under \eqref{firstglobalmoment}, we define, cf.~\eqref{firstdirmoment},
\begin{equation}\label{firstfullmoment}
  \m:=\varkappa^+ \int_\X x a^+(x)\,dx\in\X.
\end{equation} 

Next, we assume that
\begin{equation}\label{as:nondegglobalmod}\tag{A5}
  \begin{gathered}
    \text{there exists $\rho,\delta>0$, such that} \\
    \varkappa^{+}a^{+}(x)-\kn \theta a^{-}(x)\geq\rho, \text{ for a.a. }  x\in B_\delta(0).
  \end{gathered}
\end{equation}
Clearly, \eqref{as:nondegglobalmod} implies that
\begin{equation} \label{as:nondegglobal}
\text{there exists $\rho,\delta>0$ such that} \ a^{+}(x)\geq\rho, \text{ for a.a. } x\in B_\delta(0),
\end{equation}
in particular, then, for all $\xi\in\S$, \eqref{as:nondegdir} holds with $r(\xi)=0$. 

Note that, see Proposition~\ref{prop:verynew} below, if \eqref{as:beta}--\eqref{as:nondegglobalmod} hold and if, for some $\xi\in\S$, \eqref{as:expintdir} holds, then
\begin{equation}\label{eq:momentbelongsfront}
      \m\in\inter(\Tau_*).
\end{equation}
Here and below, for a closed set $A\subset\X$, we denote by $\inter(A)$ the interior of $A$.

The following theorem states, informally, that, for a solution $u(x,t)$ to \eqref{eq:basic} and for any $\eps>0$, the function $u(tx,t)$ converges (as $t\to\infty$) to $\theta$ locally uniformly on the set $(1-\eps)\Tau_*$ and converges to $0$ locally uniformly out of the set $(1+\eps)\Tau_*$. Moreover, if $\Tau_*$ is bounded, then the latter convergence holds uniformly.
\begin{theorem}\label{thm:combi}
Let the conditions \eqref{as:beta}--\eqref{as:nondegglobalmod} hold. Let $u_0\in E_\theta^+$ and $u\in\x_\infty$ be the corresponding classical solution to \eqref{eq:basic} on $\R_+$.
\begin{enumerate}
  \item Let there exist $\xi\in\S$, such that \eqref{as:expintdir} holds. Let $u_0$ be such that \eqref{eq:fastinitcond} holds for all those $\xi\in\S$ where $c_*(\xi)<\infty$. Then, for any compact set $\Tauin\subset \X\setminus\Tau_*$, there exist $\nu=\nu(\Tauin)>0$ and $D=D(u_0,\Tauin)>0$, such that
\begin{equation}\label{eq:globalaboveresult}
  \esssup_{x\in t\Tauin} u(x,t)\leq D  e^{-\nu t}, \quad t>0.
\end{equation}
\item If, additionally, the set $\Tau_*$ is bounded (and hence compact), then \eqref{eq:globalaboveresult} holds for any (unbounded) closed set $\Tauin\subset \X\setminus\Tau_*$.
\item Let $u_0$ be such that there exist $x_0\in\X$, $\eta>0$, $r>0$, with $u_0(x)\geq\eta$ for a.a.~$x\in B_r(x_0)$. Then, for any compact set $\Tauin\subset\inter(\Tau_*)$,
  \begin{equation}\label{eq:globalbelowresult}
    \lim_{t\to\infty} \essinf_{x\in t\Tauin} u(x,t)=\theta.
  \end{equation}
\end{enumerate}
\end{theorem}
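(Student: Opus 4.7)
The plan is to reduce each to Theorem~\ref{thm:decayoutsidedirectional} via a finite-cover argument over directions. For each $y\notin\Tau_*$, \eqref{eq:condonfrontglobal} provides $\xi_y\in\S$ with $y\cdot\xi_y>c_*(\xi_y)$; the strict inequality forces $c_*(\xi_y)<\infty$, so \eqref{as:expintdir} holds for $\xi_y$ and the hypothesis \eqref{eq:fastinitcond} on $u_0$ is available. By continuity, one obtains an open neighborhood $V_y\ni y$ and $\eps_y>0$ with $z\cdot\xi_y\geq c_*(\xi_y)+\eps_y$ on $V_y$. For compact $\Tauin\subset\X\setminus\Tau_*$, extract a finite subcover $V_{y_1},\dots,V_{y_k}$ and set $\Tauout_i:=\{z\in\X:z\cdot\xi_{y_i}<c_*(\xi_{y_i})+\eps_{y_i}/2\}$. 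Each $\Tauout_i$ is open, contains $\Tau_*(\xi_{y_i})$, and is at positive distance from its complement; Theorem~\ref{thm:decayoutsidedirectional} with direction $\xi_{y_i}$ then gives exponential decay of $u(x,t)$ for $x\notin t\Tauout_i$. Any $x\in t\Tauin$ lies outside at least one $t\Tauout_i$ by construction, so taking worst constants over the $k$ directions delivers~\eqref{eq:globalaboveresult} for Part~(1). In Part~(2), where $\Tauin$ is closed but possibly unbounded while $\Tau_*$ is compact, one exploits the compactness of $\Tau_*$ and of $\S$ to extract finitely many directions $\xi_i$ with $c_*(\xi_i)<\infty$ such that the corresponding $\bigcap_i\Tauout_i$ lies in a prescribed open neighborhood of $\Tau_*$ disjoint from $\Tauin$; the rest of the argument is identical.

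\textbf{Lower bound (Part (3)).} This is a two-step spreading argument. First, since $u\leq\theta$ implies $uG(u)\leq(\varkappa^+-m)u$, the comparison principle (Proposition~\ref{prop:fullcomp}) shows that $u$ dominates the solution of the linear nonlocal diffusion $\partial_t v=\varkappa^+(a^+\ast v-v)$ with initial data $u_0$; the positive lower bound of $a^+$ on $B_\delta(0)$ from \eqref{as:nondegglobal} then yields, after iteration, a uniform positive lower bound for $u(\cdot,T)$ on any prescribed ball $B_R(x_0)$, provided $T=T(R)$ is sufficiently large. Second, fix compact $\Tauin\subset\inter(\Tau_*)$: for every $\xi\in\S$ with $c_*(\xi)<\infty$ there is a positive gap, uniform in $y\in\Tauin$, between $y\cdot\xi$ and $c_*(\xi)$. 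Pick $c\in(\max_{y\in\Tauin}y\cdot\xi,\,c_*(\xi))$ and let $\psi_c$ be a traveling-wave profile from Theorem~\ref{thm:trwall}; an appropriate shift and truncation place $x\mapsto\psi_c(x\cdot\xi-s)$ below $u(\cdot,T)$, and comparison yields $u(x,t+T)\geq\psi_c(x\cdot\xi-ct-s)$ for $t>0$. For $x=ty$ with $y\in\Tauin$, the argument $t(y\cdot\xi-c)-s$ tends to $-\infty$, and $\psi_c(-\infty)=\theta$ by~\eqref{eq:deftrw}. A finite-cover argument over directions---supported by $\m\in\inter(\Tau_*)$ from \eqref{eq:momentbelongsfront}, which locates the natural centre of spreading strictly inside $\Tau_*$---then yields \eqref{eq:globalbelowresult}.

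\textbf{Main obstacle.} The delicate step is the sub-solution construction in Part~(3): producing a genuine sub-solution of the doubly-nonlocal equation~\eqref{eq:basic} from $\psi_c$ requires careful handling of both nonlocal terms, since truncation of $\psi_c$ can destroy the sub-solution property through the diffusion $a^+\ast u$ and through the nonlocal competition $\kn(a^-\ast u)$ in $G$. The standard remedy is to use a speed strictly below $c_*(\xi)$, exploit the KPP-type concavity of the reaction near zero, and use the positive lower bound from the first step to absorb the truncation error; Proposition~\ref{prop:fullcomp} then closes the argument.
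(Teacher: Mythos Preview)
Your treatment of Parts (1) and (2) is correct and is essentially the paper's proof (Propositions~\ref{prop:combi-1} and~\ref{prop:combi-2}): both reduce to Theorem~\ref{thm:decayoutsidedirectional} by selecting finitely many directions. For Part~(2) the paper extracts the finite family via \cite[Lemma~7.2]{Wei1982a}, which produces a polyhedral set $M$ with $\Tau_*\subset M\subset\X\setminus\Tauin$; this is exactly the compactness-of-$\Tau_*$ argument you describe.

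Part~(3) has a real gap. You write ``pick $c\in(\max_{y\in\Tauin}y\cdot\xi,\,c_*(\xi))$ and let $\psi_c$ be a traveling-wave profile from Theorem~\ref{thm:trwall}'', but that theorem states that no traveling wave exists for $c<c_*(\xi)$. Taking $c\geq c_*(\xi)$ instead does not help: since $\psi_c(-\infty)=\theta$, the function $x\mapsto\psi_c(x\cdot\xi-s)$ is close to $\theta$ on a half-space, while your Step~1 only delivers a positive lower bound on a finite ball, so no shift places the wave below $u(\cdot,T)$. The ``standard remedy'' you invoke---a truncated, compactly supported bump moving at sub-minimal speed---is a genuine construction in the local KPP case, but here both the diffusion $\varkappa^+a^+\ast u$ and the competition $\kn u(a^-\ast u)$ are nonlocal, and truncation destroys the sub-solution inequality in a way that is not repaired by a vague appeal to concavity near zero. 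You have identified the obstacle correctly but not resolved it.

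The paper takes a different route that bypasses explicit sub-solutions. It feeds the semi-flow $Q_T$ (shown to satisfy \ref{eq:QBtheta_subset_Btheta}--\ref{prop:Q_cont}) into Weinberger's abstract recursion \cite[Theorem~6.2]{Wei1982a}, which directly yields $\min_{x\in n\Tauin_T}u(x,nT)\to\theta$ once $u_0\geq\sigma$ on a sufficiently large ball (Lemma~\ref{lem:conv_to_theta-1}). Two short steps then pass from integer times $nT$ to all rationals $n/k$ and finally to continuous time via the joint uniform continuity of $u(x,t)$ (Proposition~\ref{prop:conv_to_theta_cont_time}). The required large-ball lower bound is supplied not by your linear comparison but by the hair-trigger effect (Theorem~\ref{thm:hair-trigger}), which gives the stronger conclusion $u(\cdot+t\m,t)\to\theta$ locally uniformly; a short geometric argument using $\m\in\inter(\Tau_*)$ then absorbs the time shift. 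In short, the paper trades the delicate doubly-nonlocal sub-solution construction for an off-the-shelf discrete-time spreading result plus the hair-trigger effect, both of which were already available.
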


By Proposition~\ref{prop:front_is_non-empty} below, a sufficient condition that $\Tau_*$ is a compact set is that
\begin{gather}\label{as:expintglobal}
  \text{there exists $\mu_d>0$ such that} \ \int_\X a^+(x) e^{\mu_d|x|}\,dx<\infty. \tag{A6}
\end{gather}
Evidently, \eqref{as:expintglobal} implies \eqref{firstglobalmoment}. We will show in Remark~\ref{rem:justequiv}, that \eqref{as:expintglobal} is equivalent to that \eqref{as:expintdir} holds for all $\xi\in\S$ or just for all $\xi\in\{e_i,-e_i\mid 1\leq i\leq d\}$ with an arbitrary orthonormal basic $\{e_i\mid 1\leq i\leq d\}$ in $\X$. 

Note also that, for the first two items of Theorem~\ref{thm:combi}, it is enough to assume \eqref{as:nondegglobal} instead of \eqref{as:nondegglobalmod}.

By the mentioned above, if $u_0$ belongs to $C_b(\X)$ or $\Buc$, then $u(\cdot,t)$ is continuous for $t>0$, and one can replace $\esssup$/$\essinf$ in \eqref{eq:globalaboveresult}, \eqref{eq:globalbelowresult} by $\max$/$\min$, correspondingly (note that in the third item of Theorem~\ref{thm:combi} we shall assume then that $u_0\not\equiv0$).

On Figure~\ref{fig:f1}, we sketched a relation between
$\Tau_*(\xi)$ and $\Tau_*$. The arrows describe the `motion' of the sets $t\Tau_*(\xi)$ and $t\Tau_*$, correspondingly. By Proposition~\ref{prop:front_is_non-empty} below, $\varkappa^+\m\in\Tau_*$, however, the origin may be out of $\Tau_{*,\xi}$, for some $\xi\in\S$, and hence out of $\Tau_*$. However, by Remark~\ref{rem:minspeedopospos} below, for each $\xi\in\S$, the origin must belong to at least one of the sets $\Tau_{*}(\xi)$ and $\Tau_{*}(-\xi)$. Note also that $0\in\inter{\Tau_*}$ (which does hold, if e.g. $a^+(-x)=a^+(x)$, $x\in\X$, then $\m=0$) implies, by \eqref{eq:cstar}, that all traveling waves move to their `right directions', cf. Remark~\ref{rem:zero_is_in_front} below.

\begin{figure}[!ht]
  \centering
   \begin{tikzpicture}[xscale=0.4,yscale=0.4,line width=1pt,>=stealth]
\draw[pattern=north west lines, pattern color=gray!80!white] plot [smooth cycle,tension=0.4] coordinates {(2,2) (1,4) (2,7) (3,8) (5,9) (9,9) (12,7) (13,6) (14,4) (13,2) (12,1) (11,0.7) (10,0.6) (9,0.5) (8,0.6) (7,0.7) (6,0.8) (5,0.9) (4,1) (3,1.2)};
\node (T) at (9,6) {$\Tau_*$};
\fill[black] (5,3) circle (1.5ex) node[right] {$\varkappa^+\m$};
\draw (19,0) -- (5,16);
\fill[pattern=north east lines, pattern color=gray!20!white] (19,0) -- (5,16) -- (0,16) -- (0,-2) -- (19,-2) -- cycle;
\node (T1) at (7,11) {$\Tau_{*}(\xi)$};
\draw[->] (12.5,-2) node[below] {$O$} -- (17.1,2.2) node[right] {$c_*(\xi)\xi$};
\fill[black] (12.5,-2)  circle (1.5ex);
\draw[->] (12.5,-2) -- (14.5,{4.2*2/4.6-2}) node[below] {$\xi$};
\foreach \x in {6,...,16}
    \draw[->] (\x,{-8*\x/7+8*19/7}) -- ({\x+1}, {7*(\x+1)/8+(-8*\x/7+8*19/7)-7*\x/8});
\node[right,align=center] (F) at ({12+1.5}, {7*(12+1)/8+(-8*12/7+8*19/7)-7*12/8}) {front  propagation\\ in a direction $\xi$};
\draw[->] (9,9)--(9,10);
\draw[->] (5,9)--(5,10);
\draw[->] (11,0.7)--(11,-0.3);
\draw[->] (8,0.6)--(8,-0.4);
\draw[->] (5,0.9)--(5,-0.1);
\draw[->] (1,4)--(0,4);
\draw[->] (13,2)--(14,1.2);
\draw[->] (2,2)--(1,1);
\draw[->] (2,7)--(0.8,7.6);
\node[below,align=center] (H) at (7,-0.1) {front  propagation};
\end{tikzpicture}
  \caption{Relationship between the sets $\Tau_{*}(\xi)$ and $\Tau_*$}
  \label{fig:f1}
\end{figure}
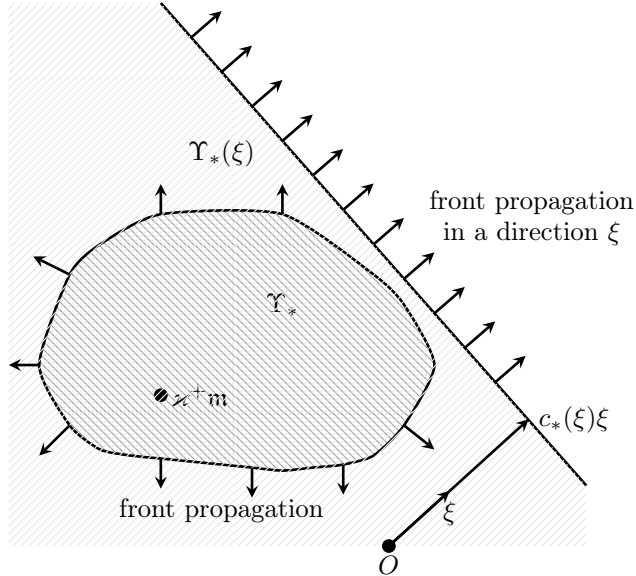

The notion of `front' has several slightly different definitions, see e.g. \cite{PS2005,Xin2009}. Informally, front for \eqref{eq:basic} has to be a set which separates $\Tauin\subset\X$, where $u(tx,t)\to\theta$, $x\in\Tauin$, $t\to\infty$, and $\Tauout\subset\X$, where $u(tx,t)\to0$, $x\in\Tauout$, $t\to\infty$. The results of Theorem~\ref{thm:combi} show that any $\eps$-neighborhood of the boundary of $\Tau_*$ can be considered as a front set in the meaning above.
Figures~\ref{fig:f2}, \ref{fig:f3} describe two `projections' of the three-dimensional graph for $u=u(x,t)$.

\begin{figure}[!ht]
  \centering
  \begin{tikzpicture}[xscale=0.55,yscale=0.43,line width=1pt,every node/.style={font=\small},declare function={u(\x)=(8-0.4)*exp(-0.025*\x*\x);},>=stealth]
  \pgfmathsetmacro{\bb}{10};
  \pgfmathsetmacro{\th}{8};
  \draw[dashed, line width=0.7pt,->] ({-(\bb+0.5)},0)--({\bb+1},0) node[below] {$\xi$};
  \draw[dashed, line width=0.7pt,->] (-1,-1)--(-1,{\th+1.5}) node[left] {$u$};
  \draw[domain=-10:10,smooth,variable=\x,black,line width=1.5pt] plot (\x,{u(\x)});
  \draw({-(\bb+0.5)},\th)--({\bb+0.5},\th);
  \node[above left] (T) at (-1,\th) {$\theta$};
  \pgfmathsetmacro{\cxi}{\bb-3};
  \pgfmathsetmacro{\bbm}{-\bb+1};
  \pgfmathsetmacro{\cxim}{-\bb+4};
  \pgfmathsetmacro{\cximmm}{-\bb+4};
  \foreach \x in {\cxi,...,\bb}{
    \draw[->,line width=0.5pt] (\x,{u(\x)})--(\x,0.1);
    \draw[->,line width=0.5pt] ({\x-0.5},{u(\x-0.5)})--({\x-0.5},0.1);}
  \foreach \x in {\bbm,...,\cximmm}{
    \draw[->,line width=0.5pt] ({\x},{u(\x)})--({\x},0.1);
    \draw[->,line width=0.5pt] ({\x-0.5},{u(\x-0.5)})--({\x-0.5},0.1);}
    \pgfmathsetmacro{\cximm}{\cxim+2.5};
    \pgfmathsetmacro{\cxin}{\cxi-3}
  \foreach \x in {\cximm,...,\cxin}{
    \draw[->,line width=0.5pt] (\x,{u(\x)})--(\x,{\th-0.1});
    \draw[->,line width=0.5pt] ({\x+0.5},{u(\x+0.5)})--({\x+0.5},{\th-0.1});
  }
  \fill[black] ({\cxi-1.5},0) circle(0.7ex) node[below] {$t c_*(\xi)\xi$};
  \fill[black] ({\cxim+1.3},0) circle(0.7ex) node[below] {$-t c_*(-\xi)\xi$};
  \draw[dashed,line width=0.7pt] ({\cxi-1.5},0)--({\cxi-1.5},\th);
  \draw[dashed,line width=0.7pt] ({\cxim+1.3},0)--({\cxim+1.3},\th);
  \draw[dashed,line width=0.7pt,->] ({\cxim+1.3},{u(\cxim+1.3)}) --({\cxim+2.1},{u(\cxim+1.3)}) node[right] {$\eps t\xi$};
  \draw[dashed,line width=0.7pt,->] ({\cxim+1.3},{u(\cxim+1.3)}) --({\cxim+0.6},{u(\cxim+1.3)}) node[left] {$-\eps t\xi$};
  \draw[dashed,line width=0.7pt,->] ({\cxi-1.5},{u(\cxi-1.5)}) --({\cxi-0.8},{u(\cxi-1.5)}) node[right] {$\eps t\xi$};
  \draw[dashed,line width=0.7pt,->] ({\cxi-1.5},{u(\cxi-1.5)}) --({\cxi-2.2},{u(\cxi-1.5)}) node[left] {$-\eps t\xi$};
  \draw[dashed,line width=0.7pt] ({\cxim+2.1},0)--({\cxim+2.1},\th);
  \draw[dashed,line width=0.7pt] ({\cxim+0.6},0)--({\cxim+0.6},\th);
  \draw[dashed,line width=0.7pt] ({\cxi-0.8},0)--({\cxi-0.8},\th);
  \draw[dashed,line width=0.7pt] ({\cxi-2.2},0)--({\cxi-2.2},\th);
      \end{tikzpicture}
 \caption{Space-value diagram}
  \label{fig:f2}
\end{figure}
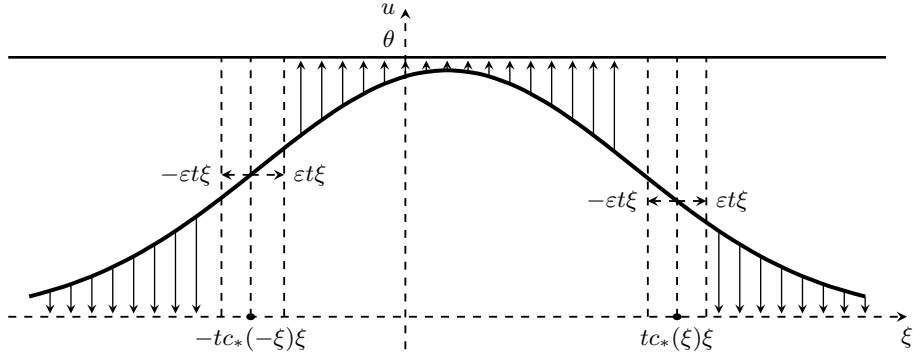

\begin{figure}[ht!]
  \centering
  \begin{tikzpicture}[xscale=1,yscale=1,line width=1pt,every node/.style={font=\small},>=stealth]
  \pgfmathsetmacro{\r}{8};
  \pgfmathsetmacro{\p}{7};
  \pgfmathsetmacro{\pp}{-1.5};
  \pgfmathsetmacro{\ra}{0.2};
  \pgfmathsetmacro{\rb}{0.31};
  \pgfmathsetmacro{\rc}{0.6};
  \pgfmathsetmacro{\rd}{0.72};
  \coordinate (O) at (0,0);
  \coordinate (T) at ({\r*1.1},0);
  \coordinate (X) at (0,\p);
  \draw[->,line width=0.7pt] (0,\pp) -- (X);
  \draw[->,line width=0.7pt] ({-\r*0.07},0) -- (T);
      \coordinate (A) at (\r,{\r*\ra});
      \coordinate (B) at (\r,{\r*\rb});
      \coordinate (C) at (\r,{\r*\rc});
      \coordinate (D) at (\r,{\r*\rd});
      \coordinate (E1) at (\r,{0.9*\p});
      \coordinate (E2) at (0,{0.9*\p});
      \coordinate (F1) at (\r,{0.9*\pp});
      \coordinate (F2) at (0,{0.9*\pp});
      \pgfmathsetmacro{\q}{7};
      \coordinate (G) at (\q,0);
      \coordinate (GA) at (\q,{\q*\ra});
      \coordinate (GB) at (\q,{\q*\rb});
      \coordinate (GC) at (\q,{\q*\rc});
      \coordinate (GD) at (\q,{\q*\rd});
      \coordinate (GAO) at (0,{\q*\ra});
      \coordinate (GBO) at (0,{\q*\rb});
      \coordinate (GCO) at (0,{\q*\rc});
      \coordinate (GDO) at (0,{\q*\rd});
      \draw [dashed] (G) -- (GD) -- (GDO);
      \draw [dashed] (GA) -- (GAO);
      \draw [dashed] (GB) -- (GBO);
      \draw [dashed] (GC) -- (GCO);
      \draw[pattern=north west lines, pattern color=gray!40!white] (0.1,{0.04*(\rb+\rc)})--(B)--(C)--cycle;
      \draw[pattern=dots, pattern color=black!80!gray] (D)--(E1)--(E2)--(O)--cycle;
      \draw[pattern=dots, pattern color=black!80!gray] (A)--(F1)--(F2)--(O)--cycle;
      \node [left] at (GAO) {$t(c_*(-\xi)-\varepsilon)$};
      \node [left] at (GBO) {$t(c_*(-\xi)+\varepsilon)$};
      \node [left] at (GCO) {$t(c_*(\xi)-\varepsilon)$};
      \node [left] at (GDO) {$t(c_*(\xi)+\varepsilon)$};
      \node [left] at (X) {$S^{d-1}\ni\xi$};
       \node [below] at (T) {$\mathbb{R}_+$};
        \node [below] at (G) {$t$};
        \draw (O) -- (\r,{\r*(\ra + \rb)/2}) node [right] {$x=t c_*(-\xi)\xi$};
        \draw (O) -- (\r,{\r*(\rc + \rd)/2}) node [right] {$x=t c_*(\xi)\xi$};
      \coordinate (K1) at ({0.3*\r},{0.48*\r*(\rb+\rc)});
      \node [below] at (K1) {$u(x,t)\to 0$};
      \coordinate (K2) at ({0.5*\r},{0.5*\pp});
      \node at (K2) {$u(x,t)\to 0$};
      \coordinate (K3) at ({0.58*\r},{0.36*\r*(\rb+\rc)});
      \node[right] at (K3) {$u(x,t)\to \theta$};
      \coordinate (t1) at (\q,{\q*(\ra + \rb)/2});
      \coordinate (t2) at (\q,{\q*(\rc + \rd)/2});
    \coordinate (t3) at (\q,{\q*(\ra + \rb+\rc + \rd)/4});
      \draw[line width=2.5pt,color=black!80!white] (t1) -- (t2);
    \node[below,rotate=90] at (t3) {$t\Tau_*$};
      \pgfmathsetmacro{\u}{{0.6*\q}};
     \node[below] (U) at (\u,0) {$1$};
    \draw[dashed] (U) -- (\u,{\u*\rd});
    \draw[line width=2.5pt,color=black!80!white] (\u,{\u*(\ra+\rb)/2}) -- (\u,{\u*(\rc+\rd)/2});
    \node[above,rotate=90] at (\u,{0.46*\u*(\rb+\rc)}) {$\Tau_*$};
      \end{tikzpicture}
 \caption{Space-time diagram}
  \label{fig:f3}
\end{figure}
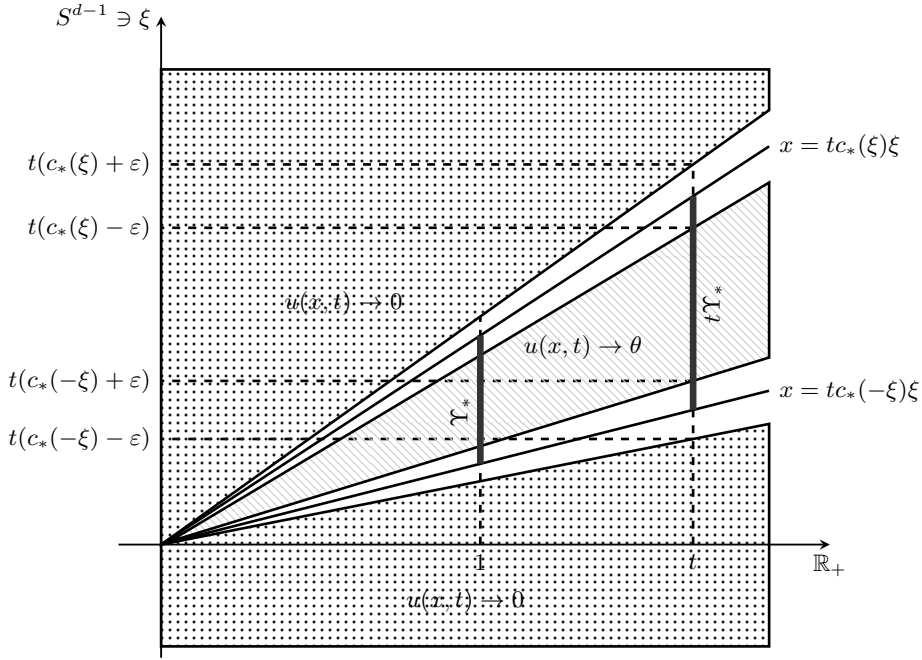

For the case $\kl=0$, $\varkappa^+=\varkappa^-=\kn$, $a^+(x)=a^-(x)$ for $x\in\X$, the result similar to Theorem~\ref{thm:combi} can be found in \cite{PS2005}, where the viscosity solution technique has been used. If, additionally, $d=1$ and the kernels $a^\pm$ decay faster than any exponential function, one can refer also to \cite{WZ2006}. For the case $\kn=0$, see also \cite{SZ2010}.

Recall that  if the condition \eqref{as:expintdir} fails for all $\xi\in\S$, then $\Tau_*=\X$, and hence \eqref{eq:globalaboveresult} has no sense. However, the third item of Theorem~\ref{thm:combi} is valid, and \eqref{eq:globalbelowresult} implies the so-called infinite speed of propagation. Namely, the following 
statement holds.
\begin{proposition}\label{prop:infinitespeed}
 Let the conditions \eqref{as:beta}--\eqref{as:nondegglobalmod} hold. Suppose that
 \begin{equation}\label{eq:heavy}
       \int_\X a^+(x) e^{\la x\cdot \xi}\,dx=\infty, \quad \la>0, \  \xi\in\S.
  \end{equation} 
 \begin{enumerate}
   \item Let $u_0\in E_\theta^+$ be such that there exist $x_0\in\X$, $\eta>0$, $r>0$, with $u_0(x)\geq\eta$ for a.a.~$x\in B_r(x_0)$, and let $u\in\x_\infty$ be the corresponding classical solution to \eqref{eq:basic} on $\R_+$. 
 Then, for any compact $\Tauin\subset\X$, the convergence \eqref{eq:globalbelowresult} holds.
  \item There does not exist a traveling wave solution of the form \eqref{eq:deftrw} to the equation~\eqref{eq:basic}
 \end{enumerate}
 \end{proposition}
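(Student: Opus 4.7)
Both items are designed to follow from item~(iii) of Theorem~\ref{thm:combi}, which only requires \eqref{as:beta}--\eqref{as:nondegglobalmod}. The plan is therefore to reduce each claim to an application of that item.

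For item~(i), I first observe that \eqref{eq:heavy} is exactly the statement that \eqref{as:expintdir} fails in every direction, so by the convention fixed just before \eqref{eq:condonfrontglobal}, $c_*(\xi) = \infty$ for every $\xi\in\S$. Consequently $\Tau_*(\xi) = \X$ for each such $\xi$, and therefore $\Tau_* = \bigcap_{\xi\in\S}\Tau_*(\xi) = \X$ and $\inter(\Tau_*) = \X$. Any compact set $\Tauin\subset\X$ is then a compact subset of $\inter(\Tau_*)$, so item~(iii) of Theorem~\ref{thm:combi} applies to the given $u_0$ and yields \eqref{eq:globalbelowresult} immediately.

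For item~(ii), I argue by contradiction: assume there exists a traveling wave $u(x,t) = \psi(x\cdot\xi - ct)$ in the sense of \eqref{eq:deftrw} for some $\xi\in\S$ and some $c\in\R$. The initial datum $u_0(x) := \psi(x\cdot\xi)$ belongs to $E^+_\theta$, and since $\psi(-\infty)=\theta$ there is $R>0$ with $\psi(s)\geq\theta/2$ for $s\leq -R$; hence $u_0\geq\theta/2$ on the ball $B_1(-(R+1)\xi)$, i.e.\ $u_0$ satisfies the hypothesis of item~(i). Because $\psi$ is decreasing and $\psi(+\infty)=0$, the set of $s$ where $\psi(s)<\theta$ is an unbounded interval, and $\psi$ is continuous outside a countable set, so I can pick a continuity point $s_0$ of $\psi$ with $\psi(s_0)<\theta$. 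Now consider the points $x_t := (ct+s_0)\xi$; they satisfy $x_t\cdot\xi - ct = s_0$ and $x_t/t \to c\xi$ as $t\to\infty$. Let $\Tauin$ be any closed ball centered at $c\xi$; it is compact, and $x_t\in t\Tauin$ for all $t$ sufficiently large. Item~(i) applied to $u_0$ gives $\essinf_{x\in t\Tauin}u(x,t)\to\theta$, while the traveling wave identity combined with continuity of $\psi$ at $s_0$ forces $\essinf_{x\in t\Tauin}u(x,t)\leq \psi(s_0)+o(1)<\theta$, the desired contradiction.

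\textbf{Main obstacle.} The only delicate point is passing from the pointwise value $u(x_t,t)=\psi(s_0)$ to a bound on $\essinf_{x\in t\Tauin}u(x,t)$, since the traveling wave formula holds only a.e.\ and $\psi$ is merely right-continuous. Choosing $s_0$ among the co-countable set of continuity points of $\psi$, and noting that a full-measure neighborhood of $x_t$ sits inside $t\Tauin$ for large $t$ and is mapped by $x\mapsto x\cdot\xi-ct$ into an interval around $s_0$ on which $\psi$ stays below $\theta-\eta$ for some $\eta>0$, handles this issue.
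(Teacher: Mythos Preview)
Your proof is correct and follows the paper's strategy: item~(i) is handled identically (the heavy-tail assumption forces $\Tau_*=\X$, so item~(iii) of Theorem~\ref{thm:combi} applies to every compact), and for item~(ii) both you and the paper argue by contradiction from the initial datum $u_0(x)=\psi(x\cdot\xi)$ via item~(i).

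The only difference is in how the contradiction is extracted. The paper picks any compact $\K$ with $c_1:=\max_{y\in\K} y\cdot\xi>c$ and simply computes
\[
\essinf_{x\in t\K}\psi(x\cdot\xi-ct)=\essinf_{y\in\K}\psi\bigl(t(y\cdot\xi-c)\bigr)\leq\psi\bigl(t(c_1-c)\bigr)\longrightarrow\psi(+\infty)=0,
\]
using only monotonicity of $\psi$ and the boundary value $\psi(+\infty)=0$. This drives the $\essinf$ all the way to $0$ and sidesteps your continuity-point selection and the a.e.\ discussion in your ``main obstacle'' paragraph. Your argument is fine, but note that the continuity point is not actually needed even in your version: since $\psi$ is decreasing, $\psi(s)\leq\psi(s_0)$ for all $s\geq s_0$, and the half-space $\{x:\,x\cdot\xi\geq ct+s_0\}$ meets $t\Tauin$ in a set of positive measure for large $t$, which already gives $\essinf_{x\in t\Tauin}u(x,t)\leq\psi(s_0)<\theta$.
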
 
For further study of the infinite speed of propagation and the so-called acceleration effect see \cite{FKT2016,FT2017c,Gar2011}.

The paper is organised as follows. In Section~\eqref{sec:tech}, we describe the properies of the semi-flow generated by the equation \eqref{eq:basic} and connect Weinberger's scheme \cite{Wei1982a} with Theorem~\ref{thm:trwall}. In~Section~\ref{sec:propindir}, we study the propagation of a solution to \eqref{eq:basic} in a fixed direction and prove Theorem~\ref{thm:decayoutsidedirectional}. In Subsection~\ref{subsec:convto0}, we find sufficient conditions that $\Tau_*$ is a compact and has a non-empty interior, and we prove the first two items of Theorem~\ref{thm:combi}. In Subsection~\ref{subsec:convtotheta}, we extend Weinberger's scheme from discrete to continuous time and prove (Proposition~\ref{prop:conv_to_theta_cont_time}) the convergence \eqref{eq:globalbelowresult} under additional assumption on the initial condition. Finally, using the hair-trigger effect proved early in \cite{FT2017a}, we get rid on the latter restriction and prove the third item of Theorem~\ref{thm:combi}.

\section{Technical tools}\label{sec:tech}

\subsection{Properties of semi-flow}

For any $t\geq0$ and $0\leq f\in E=L^\infty(\X)$, we define the continuous semi-flow (see \cite{FKT100-1} for details) as follows
\begin{equation}
  (Q_{t}f)(x):=u(x,t),\qquad \text{a.a. } x\in\X,\label{def:Q_T}
\end{equation}
where $u(x,t)$ is the solution to \eqref{eq:basic} with the initial condition $u(x,0)=f(x)$. 
\begin{theorem}[\!\!{{\cite[Theorem 1.5]{FKT100-1}; see also \cite[Proposition 5.4]{FT2017a}}}]\label{thm:Qholds}
Let  \eqref{as:beta}--\eqref{as:compar} hold. Let $(Q_t)_{t\geq0}$ be the semi-flow  \eqref{def:Q_T} on the cone $\{0\leq f\in E\}$. Then, for each $t>0$, $Q=Q_t$ satisfies the following properties:
\begin{enumerate}[label=\textnormal{(Q\arabic*)}]
    \item $Q$ maps each of sets $E^+_\theta$, $E^+_\theta\cap C_b(\X)$, $E^+_\theta\cap\Buc$ into itself; \label{eq:QBtheta_subset_Btheta}
    \item let $T_y$, $y\in\X$, be a translation operator, given by  \label{prop:QTy=TyQ}
      \begin{equation}\label{shiftoper}
        (T_y f)(x)=f(x-y), \quad x\in\X,
      \end{equation}
      then
      \begin{equation}
        (QT_{y}f)(x)=(T_{y}Qf)(x), \quad x,y\in\X,\ f\in E^+_\theta;\label{eq:QTy=TyQ}
      \end{equation}
    \item $Q0=0$, $Q\theta=\theta$, and $Q r>r$, for any constant $r\in(0,\theta)$; \label{prop:Ql_gr_l}
    \item if $f,g\in E^+_\theta$, $f \leq g $, then $Qf \leq Qg$; \label{prop:Q_preserves_order}
    \item if $f_n,f\in E^+_\theta$, $f_{n}\locun f$, then $(Qf_{n})(x)\to (Qf)(x)$ for (a.a.) $x\in\X$; \label{prop:Q_cont}
    \item if $d=1$, then $Q:\M\to\M$.\label{prop:Q_Mtheta}
  \end{enumerate}
\end{theorem}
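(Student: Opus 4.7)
The plan is to reduce every item to the mild (Duhamel) reformulation of \eqref{eq:basic} together with monotonicity of the associated integral operator. Concretely, I would choose $C>0$ so large that the map $u\mapsto \varkappa^+ a^+*u + (C-m)u - u\,G(u)$ is nondecreasing in $u$ on $E^+_\theta$ (possible because $u\,G(u)$ is Lipschitz in $u$ on $E^+_\theta$), and then rewrite \eqref{eq:basic} as the fixed-point equation
\[
u(t) \;=\; \Phi_f(u)(t) \;:=\; e^{-Ct}f + \int_0^t e^{-C(t-s)}\bigl[\varkappa^+ a^+*u(s)+(C-m)u(s)-u(s)G(u(s))\bigr]ds.
\]
The unique classical solution $Q_t f$ from \cite[Theorem~2.2]{FT2017a} coincides with the unique fixed point of $\Phi_f$ and can be obtained as the \emph{monotone} Picard iteration starting from $u^{(0)}\equiv 0$.

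Several items follow essentially from this construction. Property \ref{prop:Q_preserves_order} is immediate from the monotonicity of $\Phi_f$ in both arguments. Property \ref{prop:QTy=TyQ} is immediate from the convolution structure and uniqueness: $T_y u$ solves \eqref{eq:basic} with initial datum $T_y f$. For \ref{prop:Ql_gr_l}, both $0$ and $\theta$ are constant stationary solutions by the very definition \eqref{theta_def} of $\theta$, while for a constant $r\in(0,\theta)$ the ansatz $u(x,t)\equiv v(t)$ reduces \eqref{eq:basic} to the logistic ODE $\dot v = \varkappa^-v(\theta-v)$, whose solution from $r$ strictly increases toward $\theta$; uniqueness then yields $Q_t r > r$. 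The tube invariance in \ref{eq:QBtheta_subset_Btheta} follows from \ref{prop:Q_preserves_order} applied to the ordered pair $(0,\theta)$; the nontrivial input is that at points where $u$ touches $\theta$ the right-hand side of \eqref{eq:basic} is nonpositive, which is precisely the role of \eqref{as:compar} and is the content of \cite[Theorem~1.5, Remark~2.6]{FKT100-1}. Preservation of $C_b(\X)$ and $\Buc$ follows from the modulus-of-continuity inequality $\omega(a^\pm*f;\delta)\leq \omega(f;\delta)$, which makes $C_b(\X)\cap E^+_\theta$ and $\Buc\cap E^+_\theta$ invariant for $\Phi_f$; these subsets are closed under uniform convergence, so the fixed point lies in them. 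Item \ref{prop:Q_Mtheta} in $d=1$ is a one-line corollary of \ref{prop:QTy=TyQ} and \ref{prop:Q_preserves_order}: if $f\in\M$ is nonincreasing and $h\geq 0$, then $T_h f\leq f$, whence $T_h(Qf) = Q(T_h f) \leq Qf$, which is exactly the monotonicity of $Qf$.

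The main obstacle is \ref{prop:Q_cont}: one must upgrade merely \emph{locally} uniform convergence $f_n\locun f$ to pointwise convergence of $Qf_n$. The nonlocal term couples values of $u$ arbitrarily far away, so uniform convergence on compacts does not transfer through the convolution directly. The plan is to split each convolution
\[
\int_\X a^+(x-y)\bigl(f_n(y)-f(y)\bigr)\,dy \;=\; \int_{B_R(x)} + \int_{\X\setminus B_R(x)}
\]
for $R$ large: on $B_R(x)$ one uses the locally uniform convergence, while the tail is controlled, uniformly in $n$, by $2\theta\int_{|z|\geq R} a^+(z)\,dz$, which is small thanks to $a^+\in L^1(\X)$ and the pointwise bound $0\leq f_n\leq\theta$. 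Feeding this decomposition into the mild formulation of the difference $Qf_n - Qf$, and closing with Gronwall's inequality pointwise in $x$ on each finite time interval $[0,T]$, yields $|Qf_n(x)-Qf(x)|\to 0$ for (a.a.)\ $x$; this is essentially the route taken in \cite[Proposition~5.4]{FT2017a}.
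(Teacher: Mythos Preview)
The paper does not prove Theorem~\ref{thm:Qholds}: it is quoted from \cite[Theorem~1.5]{FKT100-1} (see also \cite[Proposition~5.4]{FT2017a}), and no argument appears in the present text. There is therefore nothing here to compare your sketch against; the closest relative in this paper is Proposition~\ref{prop:stab_Linf_ae}, which establishes a weaker variant of \ref{prop:Q_cont} by a rather different route (sandwiching between solutions with data $\max\{u_n(\cdot,0),u(\cdot,0)\}$ and $\min\{u_n(\cdot,0),u(\cdot,0)\}$ and estimating the difference via an explicit exponential of a linear operator).

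That said, your outline is the standard route and would go through, with one correction. You justify the monotonicity of $u\mapsto \varkappa^+ a^+*u + (C-m)u - u\,G(u)$ on $E^+_\theta$ by saying ``$u\,G(u)$ is Lipschitz in $u$.'' Lipschitz continuity alone is not enough: the term $-\kn u\,(a^-*u)$ is nonlocal, and for $u_1\leq u_2$ the contribution $-\kn u_1\,a^-*(u_2-u_1)$ cannot be absorbed into $C(u_2-u_1)$ for any finite $C$. What saves you is precisely \eqref{as:compar}:
\[
\varkappa^+ a^+*(u_2-u_1)-\kn u_1\,a^-*(u_2-u_1)\ \geq\ \bigl(\varkappa^+ a^+-\kn\theta a^-\bigr)*(u_2-u_1)\ \geq\ 0,
\]
after which the remaining local terms are controlled by taking $C$ large. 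So \eqref{as:compar} enters not only for the tube invariance in \ref{eq:QBtheta_subset_Btheta} but already for the monotone Picard scheme behind \ref{prop:Q_preserves_order}. A minor omission: in \ref{prop:Q_Mtheta} you derived that $Qf$ is nonincreasing but said nothing about right-continuity; since a monotone bounded function on $\R$ always has a right-continuous representative, this is easily patched.
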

  Here and below $\locun$ denotes the locally uniform convergence of functions on $\X$ (in other words, $f_n\1_\La$ converge to $f\1_\La$ in $E$, for each compact $\La\subset\X$), and $\M$ denotes the set of all decreasing and right-continuous functions $f:\R\to[0,\theta]$.

For each $0\leq T_1<T_2<\infty$, let $\x_{T_1,T_2}$ denote the Banach space of all continuous mappings from $[T_1,T_2]$ to $E$ with the norm 
\[
  \|u\|_{T_1,T_2}:=\sup_{t\in[T_1,T_2]}\|u(\cdot,t)\|_E.
\]
For any $T>0$, we set also $\x_T:=\x_{0,T}$ and consider the subset $\mathcal{U}_T\subset\x_T$ of all mappings which are continuously differentiable on $(0,T]$. Here and below, we consider the left derivative at $t=T$ only. 

The property \ref{prop:Q_preserves_order} gives the comparison principle for solutions to \eqref{eq:basic}. To formulate a more general result needed for the sequel, consider, for each $T>0$ and $u\in\mathcal{U}_T$, 
\begin{equation}\label{Foper}
  (\mathcal{F}u)(x,t):=\dfrac{\partial u}{\partial t}(x,t)-\varkappa^{+}(a^{+}*u)(x,t)+mu(x,t)+u(x,t) \bigl( Gu \bigl)(x,t)
\end{equation}
for all $t\in(0,T]$ and a.a.~$x\in\X$.
\begin{proposition}[\!\!{{\cite[Proposition 2.8]{FKT100-1}, cf.~\cite[Theorem~2.3]{FT2017a}}}]\label{prop:fullcomp}
  Let \eqref{as:beta}--\eqref{as:compar} hold. Let $T>0$ be fixed and $u_1,u_2\in\mathcal{U}_T$ be such that, for all $t\in(0,T]$, $x\in\X$,
    \begin{gather}
    (\mathcal{F}u_1)(x,t)\leq (\mathcal{F}u_2)(x,t),\label{eq:max_pr_BUC:ineq}\\
    0 \leq u_1(x,t)\leq\theta, \qquad 0 \leq u_2(x,t)\leq \theta,\notag\\
      0\leq u_{1}(x,0)\leq u_{2}(x,0)\leq \theta.\notag
    \end{gather}
    Then, for all $t\in[0,T]$, $x\in\X$,
  \begin{equation}\label{eq:comparineq}
    0\leq u_{1}(x,t)\leq u_{2}(x,t) \leq \theta.
  \end{equation}
\end{proposition}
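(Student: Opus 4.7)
The plan is to reduce the claim to the single inequality $w := u_2 - u_1 \ge 0$ a.e., the other bounds being hypotheses. I would subtract $\mathcal{F}u_1 \le \mathcal{F}u_2$ and apply the algebraic decomposition
\[
u_2 G(u_2) - u_1 G(u_1) = w G(u_2) + u_1\bigl(G(u_2)-G(u_1)\bigr) = w G(u_2) + \kl u_1 w + \kn u_1 (a^- * w)
\]
to recast the resulting inequality as
\[
\frac{\partial w}{\partial t}(x,t) + c(x,t)\, w(x,t) \ge (Bw)(x,t),
\]
where $c := m + G(u_2) + \kl u_1 \ge 0$ is bounded by a constant $M$ depending only on $m,\kl,\kn,\theta$, where $f := \mathcal{F}u_2 - \mathcal{F}u_1 \ge 0$, and where
\[
(Bw)(x,t) := \int_\X \bigl(\varkappa^+ a^+(x-y) - \kn u_1(x,t)\, a^-(x-y)\bigr)\, w(y,t)\, dy.
\]
The specific choice of placing $u_1$ in front of the nonlocal term is what lets \eqref{as:compar} bite: since $u_1 \le \theta$, the kernel of $B$ satisfies $\varkappa^+ a^+(z) - \kn u_1\, a^-(z) \ge \varkappa^+ a^+(z) - \kn \theta\, a^-(z) \ge 0$ for a.a.\ $z\in\X$, so $B$ is a bounded, a.e.-positivity-preserving operator on $E$.

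Next, I would fix $\alpha > M$, put $v := e^{\alpha t} w$, and rewrite the inequality (with $f \ge 0$) in fixed-point form
\[
v(\cdot,t) \ge v(\cdot,0) + \int_0^t \bigl[Bv(\cdot,s) + (\alpha - c(\cdot,s))\, v(\cdot,s)\bigr]\, ds,
\]
with equality if $f\equiv0$. On a short interval $[0,T_0]$, with $T_0$ depending only on $\|B\|$ and $\alpha$, the right-hand side of the associated equation is a strict contraction on $\x_{T_0}$, so the unique fixed point coincides with $v$. Running the Picard iteration from $v^{(0)} \equiv v(\cdot,0) \ge 0$, every summand preserves a.e.-nonnegativity (because the kernel of $B$ is a.e.\ nonnegative, $\alpha - c \ge 0$, and $f \ge 0$), so each $v^{(n)} \ge 0$ a.e., and hence $v \ge 0$ a.e.\ on $[0,T_0]$. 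Since $T_0$ is independent of the initial time, repeating the step on $[T_0,2T_0]$, and so on, exhausts $[0,T]$ in finitely many iterations, giving $w = e^{-\alpha t} v \ge 0$, i.e.\ $u_1 \le u_2$.

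The main obstacle, I expect, is the very first algebraic step: one must split $u_2 G(u_2) - u_1 G(u_1)$ in a way that pins a factor $u_i \le \theta$ to the nonlocal $a^-$-convolution, so that \eqref{as:compar} forces positivity of the resulting integral kernel; without this arrangement, there is no mechanism to control the sign of the $\kn (a^-*w)$ contribution. Once the nonnegative-kernel form is in place, the positivity-preserving Picard/contraction scheme in $L^\infty$ runs essentially on autopilot, the only minor subtlety being that all pointwise inequalities must be read a.e., consistent with the $E = L^\infty(\X)$ setting in which the comparison is stated.
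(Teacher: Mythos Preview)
The paper does not actually prove this proposition: it is merely quoted, with a citation to \cite[Proposition~2.8]{FKT100-1} (cf.\ \cite[Theorem~2.3]{FT2017a}), and no proof environment follows the statement. So there is no ``paper's own proof'' to compare against here.

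Your argument is essentially the standard one and is correct in substance. The algebraic splitting
\[
u_2 G(u_2)-u_1 G(u_1)=w\,G(u_2)+\kl u_1 w+\kn u_1(a^-*w)
\]
is exactly the right move: it places the factor $u_1\le\theta$ in front of the $a^-$-convolution, so that \eqref{as:compar} makes the integral kernel $\varkappa^+a^+-\kn u_1 a^-\ge0$, and the resulting operator $B$ is positivity-preserving and bounded on $E$. The exponential shift $v=e^{\alpha t}w$ with $\alpha$ exceeding the sup of $c=m+G(u_2)+\kl u_1$ turns the multiplicative coefficient $\alpha-c$ nonnegative, and the short-time contraction/Picard scheme then propagates $v(\cdot,0)\ge0$ to $v\ge0$ on $[0,T_0]$, after which one iterates.

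One expository wobble to clean up: as displayed, your integral relation is an \emph{inequality} (you dropped the term $e^{\alpha s}f$), yet you then assert that ``the unique fixed point coincides with $v$''. That is false for the truncated operator; $v$ is the fixed point only of the full map
\[
\Psi(g)(\cdot,t)=v(\cdot,0)+\int_0^t\bigl[Bg(\cdot,s)+(\alpha-c(\cdot,s))g(\cdot,s)+e^{\alpha s}f(\cdot,s)\bigr]\,ds,
\]
which still preserves the nonnegative cone because $f\ge0$. You clearly intend this (you invoke $f\ge0$ in the Picard step), so simply keep $f$ in the displayed operator and the logic is airtight.
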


We will need also a weaker form of \ref{prop:Q_cont} under weaker assumptions. 
\begin{proposition}\label{prop:stab_Linf_ae}
Let \eqref{as:beta}, \eqref{as:compar}
hold. Let $(Q_t)_{t\geq0}$ be the semi-flow  \eqref{def:Q_T} on the cone $\{0\leq f\in E\}$. Let $T>0$ be fixed.  Consider a sequence of functions $u_{n}\in\x_T$ which are solutions
to \eqref{eq:basic} with uniformly bounded initial conditions: $u_{n}(\cdot,0)\in E^+_\theta$, $n\in\N$. Let $u\in\x_T$ be a solution to \eqref{eq:basic} with initial condition $u(\cdot,0)$ such that
$u_{n}(x,0)\to u(x,0)$, for a.a.~$x\in\X$. Then
$u_{n}(x,t)\to u(x,t)$, for a.a.~$x\in\X$, uniformly in $t\in[0,T]$.
\end{proposition}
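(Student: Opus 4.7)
The plan is to combine the mild (Duhamel) formulation of \eqref{eq:basic} with a Picard iteration on a short time interval, exploiting the elementary lemma: if $\{f_n\},f\in E^+_\theta$ and $f_n\to f$ a.e.\ on $\X$, then for every $a\in L^1(\X)$ and every $x\in\X$ one has $(a*f_n)(x)\to(a*f)(x)$ by dominated convergence, the dominating function being $\theta|a(x-\cdot)|\in L^1(\X)$. By~\ref{eq:QBtheta_subset_Btheta} in Theorem~\ref{thm:Qholds}, both $u$ and every $u_n$ lie in $E^+_\theta$ on $[0,T]$, and multiplying \eqref{eq:basic} by $e^{mt}$ and integrating yields the mild formulation
\[
u(x,t)=e^{-mt}u(x,0)+\int_0^t e^{-m(t-s)}\bigl(\varkappa^+(a^+*u)(x,s)-u(x,s)G(u(x,s))\bigr)\,ds,
\]
and the analogous identity for each $u_n$.

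Next I would pick $T_0>0$ so small that, for every $f\in E^+_\theta$, the Picard operator
\[
\Psi_{f}[v](x,t):=e^{-mt}f(x)+\int_0^t e^{-m(t-s)}\bigl(\varkappa^+(a^+*v)(x,s)-v(x,s)G(v(x,s))\bigr)\,ds
\]
is a $\tfrac12$-contraction on the closed ball $\{v\in\x_{T_0}:\|v\|_{\x_{T_0}}\leq 2\theta\}$ that it maps into itself; this is possible because $v\mapsto\varkappa^+a^+*v-vG(v)$ is Lipschitz on bounded sets of $E$. Writing $u^{(k)}:=\Psi_{u(\cdot,0)}^{\,k}[u(\cdot,0)]$ and $u_n^{(k)}:=\Psi_{u_n(\cdot,0)}^{\,k}[u_n(\cdot,0)]$ (starting from the constant-in-time seeds), and using uniqueness of the classical solution from \cite{FT2017a} to identify the fixed points with $u,u_n$, Banach's contraction principle yields
\[
  \|u-u^{(k)}\|_{\x_{T_0}}+\|u_n-u_n^{(k)}\|_{\x_{T_0}}\leq 4\theta\cdot 2^{-k}\qquad(n,k\in\N).
\]

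The heart of the argument is then the inductive claim that, on the full-measure set $\Omega\subset\X$ where $u_n(\cdot,0)\to u(\cdot,0)$ pointwise,
\[
\sup_{t\in[0,T_0]}|u_n^{(k)}(x,t)-u^{(k)}(x,t)|\longrightarrow 0\qquad(n\to\infty)
\]
for every $x\in\Omega$ and every $k\geq 0$. The base case is the hypothesis. For the inductive step, the convolution lemma applied at each fixed $s$ to the uniformly bounded, a.e.-convergent sequence $u_n^{(k)}(\cdot,s)$ gives $(a^\pm*(u_n^{(k)}-u^{(k)}))(x,s)\to 0$ for \emph{every} $x\in\X$; combined with the inductive assumption on the pointwise difference $u_n^{(k)}(x,s)-u^{(k)}(x,s)$, this forces the integrand $\Delta_n(x,s)$ in the Duhamel identity for $u_n^{(k+1)}-u^{(k+1)}$ to tend to $0$ for every $x\in\Omega$ and every $s\in[0,T_0]$, while being dominated by an $n$-independent constant. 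Dominated convergence in $s$ then produces $\int_0^{T_0}|\Delta_n(x,s)|\,ds\to 0$ for every $x\in\Omega$, which closes the induction. Combining the induction with the contraction bound above via the triangle inequality gives the a.e.\ pointwise, uniform-in-$t$ convergence of $u_n$ to $u$ on $[0,T_0]$.

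The extension from $[0,T_0]$ to $[0,T]$ is immediate by iterating the same argument on $[jT_0,(j+1)T_0]$, $j=1,\dots,\lceil T/T_0\rceil-1$, with the a.e.\ convergence established at time $jT_0$ serving as the new initial condition; only finitely many steps are needed and the intersection of finitely many sets of full measure still has full measure. The main delicate point is the inductive step of the Picard scheme: one must propagate a.e.\ convergence through the nonlinear product $v\mapsto vG(v)$, and this works precisely because convolution with the $L^1$ kernel $a^-$ inside $G$ upgrades a.e.\ convergence of $u_n^{(k)}$ into pointwise convergence at \emph{every} $x$, which is exactly what allows the dominated-convergence argument to close.
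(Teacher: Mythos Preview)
Your argument is correct, and it follows a genuinely different route from the paper's. The paper does not touch the mild formulation or a Picard scheme at all. Instead it sandwiches both $u$ and $u_n$ between the two auxiliary solutions $\underline{u}_n,\overline{u}_n$ obtained from the initial data $\min\{u_n(\cdot,0),u(\cdot,0)\}$ and $\max\{u_n(\cdot,0),u(\cdot,0)\}$, using the comparison principle \ref{prop:Q_preserves_order}. This reduces the problem to the monotone case $\overline{u}_n\geq u$, where the difference $h_n:=\overline{u}_n-u\geq 0$ satisfies a \emph{linear} equation $\partial_t h_n=P_n h_n$. The paper then dominates $P_n$ by an $n$-independent linear operator $P$ on the cone $\{h\geq 0\}$, writes the exponential series for $e^{TP}$, truncates it, and applies the convolution/a.e.\ lemma term-by-term. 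Your approach, by contrast, works directly with the nonlinear Duhamel map: the contraction bound $\|u_n-u_n^{(k)}\|_{\x_{T_0}}+\|u-u^{(k)}\|_{\x_{T_0}}\lesssim 2^{-k}$ replaces the paper's series truncation, and the inductive step propagates a.e.\ convergence through each Picard iterate via the same dominated-convergence mechanism. What the paper's route buys is a single linear equation and no need to restart on subintervals; what your route buys is that it never invokes comparison between distinct solutions (only the a~priori bound $u,u_n\in E_\theta^+$ from \ref{eq:QBtheta_subset_Btheta}), so it would survive in settings where the monotone sandwich is unavailable.
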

\begin{proof}
Clearly, $u_n(\cdot,0)\in  E^+_\theta $ implies $u(\cdot,0)\in E^+_\theta $. By~\ref{eq:QBtheta_subset_Btheta}, $u_n(\cdot,t), u(\cdot,t)\in E^+_\theta $, $n\in\N$, for any $t\geq0$. 
We define, for any $n\in\N$, 
\[
\overline{u}_{n}(x,0):=\max\left\{ u_{n}(x,0),u(x,0)\right\},\qquad \underline{u}_{n}(x,0):=\min\left\{ u_{n}(x,0),u(x,0)\right\}.
\]
Then, clearly, $0\le\underline{u}_n(x,0)\le u(x,0)\le\overline{u}_n(x,0)\le\theta$, $n\in\N$, a.a.~$x\in\X$. Hence the corresponding solutions $\overline{u}_{n}(x,t)$, $\underline{u}_{n}(x,t)$ to  \eqref{eq:basic} belongs to $ E^+_\theta $ as well. By~\ref{prop:Q_preserves_order}, one has $\underline{u}_{n}(x,t)\leq u(x,t)\leq\overline{u}_{n}(x,t)$, $n\in\N$, $t\in[0,T]$, a.a.~$x\in\X$. In the same way, one gets $\underline{u}_{n}(x,t)\leq u_n(x,t)\leq\overline{u}_{n}(x,t)$ a.e. on
$\X\times[0,T]$. Therefore, it is enough to prove that $\overline{u}_{n}$ and $\underline{u}_{n}$ converge a.e. to $u$ 

Prove that $\overline{u}_{n}(x,t)\to u(x,t)$ for a.a.~$x\in\X$ uniformly in $t\in[0,T]$. For any $n\in\N$, the function $h_{n}(\cdot,t)=\overline{u}_{n}(\cdot,t)-u(\cdot,t)\in E^+_\theta $, $t\geq0$, satisfies the equation
$\frac{\partial}{\partial t} h_{n}= P_n h_n$ with
$h_{n,0}(x):=h_n(x,0)=\overline{u}_{n}(x,0)-u(x,0)\geq0$, a.a.~$x\in\X$, where, for any $0\leq h\in\xt$,
\[
  P_{n}h := -mh+\varkappa^{+}(a^{+}*h)-\kn h(a^{-}*\overline{u}_{n}) -\kn u(a^{-}*h) - \kl h (u+\overline{u}_n).
\]
For any $u_n$ and $u$, $P_n$ is a bounded linear operator on $E$, therefore,
$h_{n}(x,t)=(e^{tP_{n}}h_{n,0})(x)$, a.a.~$x\in\X$, $t\in[0,T]$. Since $u\geq0$, one has that, for any $0\leq h\in\xt$,
$(P_nh)(x,t)\leq (Ph)(x,t)$, a.a.~$x\in\X$, $t\in[0,T]$, where a bounded linear operator $P$ is given on $E$ by
\[
  Ph:=\varkappa^{+}(a^{+}*h)-\kn u(a^{-}*h) - \kl u h.
\]
Next, the series expansions for $e^{tP_n}$ and $e^{tP}$ converge in the topology of norms of operator on the space $E$. Then, for any $n\in\N$, $t\in[0,T]$ and a.a.~$x\in\X$,
\begin{equation}
h_n(x,t)=(e^{tP_{n}}h_{n,0})(x)\leq(e^{TP}h_{n,0})(x)=\sum_{m=0}^{\infty}\dfrac{T^{m}}{m!}P^m h_{n,0}(x), \label{eq:loc_stab_BUC:suff_est}
\end{equation}
and, moreover, for any $\eps>0$ and a.a.~$x\in\X$, one can find $M=M(\eps,x)\in\N$, such that we get from \eqref{eq:loc_stab_BUC:suff_est} that, for $t\in[0,T]$ and a.a.~$x\in\X$,
\begin{equation}
h_n(x,t)\leq\sum\limits _{m=0}^{M}\dfrac{T^{m}}{m!}P^m h_{n,0}(x)+\eps \theta, \label{eq:loc_stab_BUC:final_est}
\end{equation}
as $h_{n,0}\in E^+_\theta $, $n\in\N$.
Finally, the assumptions of the statement yield that $h_{n,0}(x)\to 0$ for a.a.~$x\in\X$. Then, by \eqref{eq:loc_stab_BUC:final_est} and \cite[Lemma 2.2]{FKT100-1}, $h_n(x,t)\to 0$ for a.a.~$x\in\X$ uniformly in $t\in[0,T]$. Hence, $\overline{u}_{n}(x,t)\to u(x,t)$ for a.a.~$x\in\X$ uniformly on $[0,T]$. The convergence for $\underline{u}_{n}(x,t)$ may be proved by an analogy.
\end{proof}

\subsection{Around Weinberger's scheme}\label{subsec:Weinberger}

We will follow the abstract scheme proposed in \cite{Wei1982a}. Let \eqref{as:beta}--\eqref{as:compar} hold. We introduce the following notation, cf.~\ref{eq:QBtheta_subset_Btheta} of Theorem~\ref{thm:Qholds},
\begin{equation}\label{eq:defUtheta}
      \Utheta:=E^+_\theta\cap C_b(\X).
\end{equation}

Consider the set $N_\theta$ of all non-increasing functions $\varphi\in C(\R)$, such that
$\varphi(s)=0$, $s\geq0$, and
\begin{equation*}
\varphi(-\infty):=\lim _{s\to-\infty}\varphi(s)\in(0,\theta).\label{def:fy_Weinberger}
\end{equation*}
It is easily seen that $N_\theta\subset \Utheta$.

For arbitrary $s\in\R$, $c\in\R$, $\xi\in\S $,
we define the mapping $V_{s,c,\xi}:L^\infty(\R)\to E$ as follows
\begin{equation}\label{defofV}
  (V_{s,c,\xi}f)(x):=f(x\cdot \xi+s+c), \quad x\in\X.
\end{equation}
Fix an arbitrary $\varphi\in N_\theta$.
 For $T>0$, $c\in\R$, $\xi\in\S $, consider the mapping $R_{T,c,\xi}:\ L^{\infty}(\R)\to L^{\infty}(\R)$, given by
\begin{equation}
(R_{T,c,\xi}f)(s):=\max\bigl\{ \varphi(s),(Q_T (V_{s,c,\xi}f))(0)\bigr\},\quad s\in\R,\label{eq:iterop_by_Weinberger}
\end{equation}
where $Q_T$ is given by \eqref{def:Q_T}.
Consider now the following sequence of functions
\begin{equation}\label{fiteration}
  f_{n+1}(s):=(R_{T,c,\xi}f_n)(s),\quad f_0(s):=\varphi(s),\qquad s\in\R, n\in\N\cup\{0\}.
\end{equation}
By Theorem~\ref{thm:Qholds} and \cite[Lemma~5.1]{Wei1982a}, $\varphi\in \Utheta$ implies $f_n\in \Utheta$ and $f_{n+1}(s)\geq f_{n}(s)$, $s\in\R$, $n\in\N$; hence one can define the following limit
\begin{equation}
f_{T,c,\xi}(s):= \lim_{n\to\infty}f_n(s), \quad s\in\R.\label{eq:limit_func_Weinberger}
\end{equation}
Also, by \cite[Lemma~5.1]{Wei1982a}, for fixed $\xi\in\S $, $T>0$, $n\in\N$, the functions $f_n(s)$ and $f_{T,c,\xi}(s)$ are nonincreasing in $s$ and in $c$; moreover, $f_{T,c,\xi}(s)$ is a lower semicontinuous function of $s,c,\xi$, as a result, this function is continuous from the right in $s$ and in $c$. Note also, that $0\leq f_{T,c,\xi}\leq\theta$.
Then, for any $c,\xi$, one can define the limiting value
\[
f_{T,c,\xi}(\infty):=\lim_{s\to\infty}f_{T,c,\xi}(s).
\]
Next, for any $T>0$, $\xi\in\S$, we define
\begin{equation}\label{eq:cTupstar}
c_T^{*}(\xi):=\sup\{ c\mid f_{T,c,\xi}(\infty)=\theta\} \in\R\cup\{-\infty,\infty\},
\end{equation}
where, as usual, $\sup\emptyset:=-\infty$. By \cite[Propositions~5.1, 5.2]{Wei1982a}, one has
\begin{equation}\label{jumpfunc}
  f_{T,c,\xi}(\infty)=\begin{cases}
    \theta, & c<c_T^*(\xi),\\
    0, & c\geq c_T^*(\xi),
  \end{cases}
\end{equation}
cf.~also \cite[Lemma~5.5]{Wei1982a}; moreover, $c_T^*(\xi)$ is a lower semicontinuous function of~$\xi\in\S$. It is crucial that, by \cite[Lemma 5.4]{Wei1982a}, neither $f_{T,c,\xi}(\infty)$ nor $c_T^{*}(\xi)$ depends on the choice of $\varphi\in N_\theta$. Note that the monotonicity of $f_{T,c,\xi}(s)$ in $s$ and \eqref{jumpfunc} imply that, for $c<c_T^*(\xi)$, $f_{T,c,\xi}(s)=\theta$, $s\in\R$.

Define now the following set, cf. \eqref{eq:cTupstar},
\begin{equation}
    \Tau_{T,\xi}=\bigl\{ x\in\X \mid x\cdot\xi\leq c_T^{*}(\xi)\bigr\}, \quad \xi\in\S , T>0.\label{eq:TauTxi}
\end{equation}
Clearly, the set $\Tau_{T,\xi}$ is convex and closed. 

Recall that, cf. the Introduction, if, under \eqref{as:beta}--\eqref{as:compar}, for a fixed $\xi\in\S$, the assumption \eqref{as:expintdir} holds, then $c_*(\xi)$ is given by the first  item of Theorem~\ref{thm:trwall}. Otherwise, if \eqref{as:expintdir} fails, we set $c_*(\xi)=\infty$.
\begin{proposition}\label{cstarsareequal}
 Let \eqref{as:beta}--\eqref{as:compar} hold. Then, for any $\xi\in\S$,  $c_*(\xi)<\infty$ if and only if $c^*_T(\xi)<\infty$ for all $T>0$, and
 \begin{equation}\label{ct=tc}
 c^*_T(\xi)=Tc_*(\xi), \quad T>0.
 \end{equation}
 As a result, cf.~\eqref{eq:condonfrontdir}, \eqref{eq:TauTxi},
 \begin{equation}\label{taut=ttau1-dir}
  \Tau_{T,\xi}=T\Tau_{1,\xi}=T\Tau_*(\xi), \quad T>0.
\end{equation}
\end{proposition}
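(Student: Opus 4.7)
The set identity \eqref{taut=ttau1-dir} is an immediate bookkeeping consequence of \eqref{ct=tc} together with \eqref{eq:condonfrontdir}--\eqref{eq:TauTxi}, so the entire proof reduces to establishing \eqref{ct=tc} and the accompanying finiteness equivalence. I would prove the two inequalities separately: ``$c_T^*(\xi) \leq T c_*(\xi)$'' by comparison with the traveling waves furnished by Theorem~\ref{thm:trwall}(1), and ``$c_T^*(\xi) \geq T c_*(\xi)$'' by a contradiction argument that converts the Weinberger limit $f_{T, cT, \xi}$ into a continuous-time traveling wave.

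For the first inequality, assume $c_*(\xi) < \infty$ and fix an arbitrary $c > c_*(\xi)$; let $\psi_c$ be the associated profile from Theorem~\ref{thm:trwall}(1), so that $(x,t) \mapsto \psi_c(x \cdot \xi - ct)$ solves \eqref{eq:basic}. Because every $\varphi \in N_\theta$ vanishes on $[0, \infty)$ while $\varphi(-\infty) < \theta = \psi_c(-\infty)$, one can choose $s_0 \in \R$ with $\psi_c(s + s_0) \geq \varphi(s)$ for all $s \in \R$. A routine induction on $n$, using \ref{prop:Q_preserves_order}, \ref{prop:QTy=TyQ} and the traveling-wave identity
\[
\bigl(Q_T\bigl[y \mapsto \psi_c(y \cdot \xi + a)\bigr]\bigr)(x) = \psi_c(x \cdot \xi + a - cT), \qquad a \in \R,
\]
then shows that the iterates of $R_{T, cT, \xi}$ defined in \eqref{fiteration} satisfy $f_n(s) \leq \psi_c(s + s_0)$ for every $n \geq 0$. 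Passing to the limit gives $f_{T, cT, \xi}(\infty) = 0$, whence \eqref{jumpfunc} yields $cT \geq c_T^*(\xi)$; letting $c \downarrow c_*(\xi)$ closes the bound. If $c_*(\xi) = \infty$ the inequality is vacuous.

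For the reverse inequality, I would argue by contradiction. If it failed, one could pick $c$ with $c_T^*(\xi) < cT$ and $c < c_*(\xi)$; by \eqref{jumpfunc} the limit $f := f_{T, cT, \xi}$ is a non-trivial non-increasing right-continuous function with $f(+\infty) = 0$ and $f(-\infty) \in (0, \theta]$, and the fixed-point identity $R_{T, cT, \xi} f = f$ combined with \ref{prop:QTy=TyQ} encodes, on the region $\{f > \varphi\}$, the discrete-time shift relation
\[
(Q_T \tilde f)(x) = \tilde f(x - cT \xi), \qquad \tilde f(x) := f(x \cdot \xi).
\]
The plan is to convert this into a genuine continuous-time traveling wave of \eqref{eq:basic} in direction $\xi$ with speed $c < c_*(\xi)$, contradicting the minimality in Theorem~\ref{thm:trwall}(1); the case $c_*(\xi) = \infty$ is handled in exactly the same way, since failure of \eqref{as:expintdir} precludes any traveling wave at all.

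The main obstacle is precisely this conversion. Concretely one must (i) translate $\varphi$ arbitrarily far to the left---which leaves $c_T^*(\xi)$ unchanged by the $\varphi$-independence of the Weinberger limit, \cite[Lemma~5.4]{Wei1982a}---so that the $\max$ with $\varphi$ becomes inactive on the effective support of $f$ and the shift identity extends to all of $\X$; (ii) upgrade this single-step equivariance to all $t \geq 0$ using the semigroup property together with \ref{prop:QTy=TyQ}, which yields an orbit of the form $u(x,t) = \psi(x \cdot \xi - ct)$ solving \eqref{eq:basic}; and (iii) verify the boundary values $\psi(-\infty) = \theta$ and $\psi(+\infty) = 0$ required by \eqref{eq:deftrw}, the former typically via a hair-trigger argument exploiting \ref{prop:Ql_gr_l} to rule out stationary plateaus strictly below $\theta$. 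These three steps constitute the standard discrete-to-continuous passage inside Weinberger's scheme; the comparison step in the first inequality is, by contrast, essentially routine.
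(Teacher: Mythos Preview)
Your upper bound $c_T^*(\xi)\leq Tc_*(\xi)$ is correct and coincides with the paper's argument: dominate the Weinberger iterates by a shifted traveling-wave profile and read the conclusion off \eqref{jumpfunc}.

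The reverse inequality has a genuine gap at your step~(ii). From the discrete relation $Q_T\tilde f=T_{cT\xi}\tilde f$ together with the semigroup identity $Q_{s+t}=Q_sQ_t$ you can only conclude $Q_{nT}\tilde f=T_{ncT\xi}\tilde f$ for $n\in\N$; nothing forces $Q_t\tilde f$ to be a translate of $\tilde f$ at intermediate times $t\in(0,T)$. Equivalently, setting $v(x,t):=(Q_t\tilde f)(x+ct\xi)$ you obtain $v(\cdot,T)=v(\cdot,0)$, i.e.\ $T$-periodicity in the moving frame, not constancy. Promoting a time-$T$ equivariant state to a genuine continuous-time traveling wave requires an extra ingredient---monotonicity of the orbit in the moving frame, or a uniqueness argument---that your sketch does not supply and that is not ``standard''. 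Your step~(iii) is also out of reach here: the hair-trigger effect (Theorem~\ref{thm:hair-trigger}) needs \eqref{as:beta}--\eqref{as:nondegglobalmod}, whereas the proposition assumes only \eqref{as:beta}--\eqref{as:compar}. Finally, step~(i) is more delicate than you indicate: the $\varphi$-independence in \cite[Lemma~5.4]{Wei1982a} concerns only $c_T^*(\xi)$ and $f_{T,c,\xi}(\infty)$, not the full limit $f_{T,c,\xi}$, so shifting $\varphi$ may change $f$ itself.

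The paper bypasses all three issues. It reduces to the one-dimensional flow $\widetilde Q_t$ via \cite[Proposition~3.3]{FKT100-1} and keeps from the iteration only the \emph{inequality}
\[
f_{T,cT,\xi}(s)\geq(\widetilde Q_Tf_{T,cT,\xi})(s+cT),
\]
so the $\max$ with $\varphi$ never needs to be deactivated. The passage from this discrete-time comparison to an actual continuous-time traveling wave with speed $c$ (and with the correct boundary values in \eqref{eq:deftrw}) is then obtained not by a direct construction but by invoking Yagisita's abstract existence theorem \cite[Theorem~5]{Yag2009} (cf.\ the proof of \cite[Theorem~1.1]{FKT100-1}), which is the substantive external input replacing your steps (i)--(iii).
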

\begin{proof}
Let $T>0$ and $c^*_T(\xi)<\infty$. 
Take any $c\in\R$ with $cT\geq c^*_T(\xi)$. Then, by \eqref{jumpfunc},
$f_{T,cT,\xi}\not\equiv\theta$.
By \eqref{eq:iterop_by_Weinberger}, \eqref{fiteration}, one has
\begin{equation}\label{qns1}
  f_{n+1}(s)\geq (Q_T (V_{s,cT,\xi}f_n))(0), \quad s\in\R.
\end{equation}
Since $f_n(s)$ is nonincreasing in $s$, one gets, by \eqref{defofV}, that, for a fixed $x\in\X$, the function $(V_{s,cT,\xi}f_n)(x)$ is also nonincreasing in $s$.
Next, by \eqref{defofV}, \eqref{eq:limit_func_Weinberger} and Propositions \ref{prop:stab_Linf_ae},
\begin{equation}\label{convspec}
  (Q_T (V_{s,cT,\xi}f_n))(x)\to (Q_T (V_{s,cT,\xi}f_{T,cT,\xi}))(x), \text{  a.a. } x\in\X.
\end{equation}
Note that, by \eqref{defofV} and \cite[Proposition~3.3]{FKT100-1},
\begin{equation}\label{eqqqphi}
  (Q_T (V_{s,cT,\xi}f_{T,cT,\xi}))(x)=\phi (x\cdot\xi,T),
\end{equation}
where $\phi(\tau,t)$, $\tau\in\R$, $t\in\R_+$ solves 
\begin{equation}
  \begin{cases}
    \begin{aligned}
      \dfrac{\partial \phi}{\partial t}(s,t)&=\varkappa^{+}(\widecheck{a}^{+}*\phi)(s,t)-m\phi(s,t) -\kl \phi^2(s,t) \\&\quad
        -\kn\phi(s,t)(\widecheck{a}^{-}*\phi)(s,t), \qquad t>0, \ \mathrm{a.a.}\ s\in\R,
    \end{aligned}\\
    \phi(s,0)=\psi(s),\qquad \mathrm{a.a.}\ s\in\R.
  \end{cases}\label{eq:basic_one_dim}
\end{equation}
 with $\psi(\tau)=f_{T,cT,\xi}(\tau+s+cT)$ (note that $s$ is a parameter now, cf.~\eqref{eq:basic_one_dim}), and
\begin{equation}\label{apm1dim}
    \widecheck{a}^\pm (s):=\int_{\{\xi\}^\bot} a^\pm (s\xi+\eta) \, d\eta, \quad s\in\R,
\end{equation}
where $\{\xi\}^\bot:=\{x\in\X\mid x\cdot\xi=0\}$.

On the other hand, the evident equality
\[
(V_{s,cT,\xi}f_{T,cT,\xi})(x+\tau\xi)=f_{T,cT,\xi}(x\cdot\xi+\tau+s+cT), \quad \tau\in\R
\]
shows that the function $V_{s,cT,\xi}f_{T,cT,\xi}$ is a decreasing function on $\X$ along the $\xi\in\S $ as $f_{T,cT,\xi}$ is a decreasing function on $\R$. Then, by \cite[Proposition~2.7]{FKT100-1} and \eqref{eqqqphi}, the function $\X\ni x\mapsto\phi (x\cdot\xi,T)\in[0,\theta]$ is decreasing
along the $\xi$ as well, i.e. 
\[
  \phi (x\cdot\xi+\tau,T)=\phi ((x+\tau\xi)\cdot\xi,T)\leq \phi(x\cdot\xi,T), \quad \tau\geq0. 
\]
As a result, the function $\phi (s,T)$ is monotone (almost everywhere) in $s$. Since $f_{T,cT,\xi}(s)$ was continuous from the right in~$s$, one gets from \eqref{qns1}, \eqref{convspec}, that
\[
f_{T,cT,\xi}(s)\geq (\tilde{Q}_T f_{T,cT,\xi})(s+cT),
\]
where $\widetilde{Q}_{t}:L^{\infty}(\R)\to L^{\infty}(\R)$ is defined as follows: $\widetilde{Q}_t\psi(s)=\phi(s,t)$, $s\in\R$, where $\phi:\R\times\R_+\to[0,\theta]$ solves \eqref{eq:basic_one_dim} with $0\leq\psi\in L^{\infty}_{+}(\R)$. Since $f_{T,cT,\xi}\not\equiv\theta$, one has that, by  \cite[Theorem 5]{Yag2009} (cf.~the proof of \cite[Theorem 1.1]{FKT100-1}), there exists a traveling wave profile with the speed $c$. By Theorem~\ref{thm:trwall}, we have that $c\geq c_*(\xi)$, and hence $Tc_*(\xi)\leq c^*_T(\xi)<\infty$. 

Let now $T>0$ and $c_*(\xi)<\infty$. Take any $c\geq c_*(\xi)$ and consider, by Theorem~\ref{thm:trwall}, a traveling wave in a direction $\xi\in\S $, with a profile $\psi\in\M$ and the speed $c$. Then, by \eqref{defofV} and \eqref{eq:deftrw},
\[
(Q_T(V_{s,cT,\xi}\psi))(x)=\psi((x\cdot\xi-cT)+s+cT)=\psi(x\cdot\xi+s).
\]
Choose $\varphi\in N_\theta$ such that $\varphi(s)\leq \psi(s)$, $s\in\R$ (recall that all constructions are independent on the choice of $\varphi$). Then, one gets from \eqref{eq:iterop_by_Weinberger} and \ref{prop:Q_preserves_order} of Theorem~\ref{thm:Qholds},
that
\[
(R_{T,cT,\xi}\varphi)(s)\leq(R_{T,cT,\xi}\psi)(s)=\psi(s), \quad s\in\R.
\]
Then, by \eqref{fiteration} and \eqref{eq:limit_func_Weinberger}, $f_{T,cT,\xi}(s)\leq\psi(s)$, $s\in\R$, and thus \eqref{jumpfunc} implies $cT\geq c_T^*(\xi)$; as a result, $c^*_T(\xi)\leq Tc_*(\xi)<\infty$, that fulfills the statement.
\end{proof}

A developement of Weinberger's scheme crucial for the sequel is the so-called \emph{hair-trigger effect}. We have proved it for a generalisation of \eqref{eq:basic} in \cite{FT2017a}. It is straightforward to check, cf.~\cite[Subsection~2.1]{FKT100-1}, that, in our settings, the result can be read as follows. 
\begin{theorem}[{{cf.~\cite[Theorem 2.5]{FT2017a}}}]\label{thm:hair-trigger}
  Let the conditions \eqref{as:beta}--\eqref{as:nondegglobalmod} hold. Let $u_0\in E_\theta^+$ be such that there exist $x_0\in\X$, $\eta>0$, $r>0$, with $u_0\geq\eta$, for a.a.~$x\in B_r(x_0)$. Let $u\in\x_\infty$ be the corresponding classical solution to \eqref{eq:basic} on~$\R_+$.
Then, for $\m$ defined by \eqref{firstfullmoment} and any compact set $K\subset \X$,
  \begin{equation}\label{eq:htformula}
    \lim_{t\to\infty} \essinf_{x\in K} u(x+t\m,t)=\theta.
  \end{equation}
 \end{theorem}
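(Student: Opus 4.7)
The plan is to combine a linear lower bound giving spreading at velocity $\m$ with the nonlinear amplification due to the positive growth rate $\beta>0$. By the comparison principle (Proposition~\ref{prop:fullcomp}) and translation equivariance (property~\ref{prop:QTy=TyQ} of Theorem~\ref{thm:Qholds}), I reduce to $x_0=0$ and may assume $u_0 = \eta \1_{B_r(0)}$; the goal becomes $\essinf_{x\in K} u(x+t\m,t)\to\theta$ for every compact $K\subset\X$.

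For a first, purely linear estimate, observe that since $u\leq\theta$ we have $G(u)\leq \varkappa^-\theta=\beta$, so $u$ is a super-solution of the conservative linear equation $\partial_t w = \varkappa^+(a^+*w-w)$. Its solution with initial datum $u_0$ admits the probabilistic representation $w(x,t)=\mathbb{E}[u_0(x-S_t)]$, where $S_t$ is a compound Poisson process with rate $\varkappa^+$ and jump law $a^+$, and $\mathbb{E}[S_t]=t\m$ by \eqref{firstfullmoment}. A central-limit-type estimate then yields, for any $R>0$, some $T_R>0$ and $\eta_R>0$ with $u(\cdot,T_R)\geq \eta_R \1_{B_R(T_R\m)}$.

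The second step is to iterate this estimate in the comoving frame via Weinberger's scheme from Section~\ref{subsec:Weinberger}, exploiting the strict inequality $c_*(\xi)>\m\cdot\xi$ for all $\xi\in\S$ provided by Theorem~\ref{thm:trwall} (second item): the minimal wave speed in every direction is strictly larger than the corresponding component of the drift, so mass accumulates in the comoving frame faster than the drift can transport it away. Iterating the semi-flow $Q_T$ on bumps translated by multiples of $T\m$, and using \eqref{as:nondegglobalmod} to ensure local non-degeneracy of convolutions, one produces on each fixed compact of the comoving frame a monotone sequence of lower bounds increasing to~$\theta$; Proposition~\ref{prop:stab_Linf_ae} then transfers the convergence from the discrete iterates to the continuous-time flow.

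The main obstacle is exactly the transition between the two steps. The raw linear bound decays like $t^{-d/2}$ on any fixed compact because pure nonlocal diffusion preserves mass and CLT-scale dispersion beats a fixed initial bump, so the comparison with the linearization alone does not suffice. Only the coupling between the nonlinear amplification at rate~$\beta$ and the strict gap $c_*(\xi)-\m\cdot\xi>0$ produces the hair-trigger effect; this is precisely what makes the Weinberger iteration of $Q_T$ converge upward to $\theta$ rather than to a smaller stationary value on each compact of the comoving frame.
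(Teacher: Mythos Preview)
The paper does not prove this result; it is quoted from \cite{FT2017a}. In the logic of the present paper, Theorem~\ref{thm:hair-trigger} is an \emph{input} to the Weinberger machinery rather than a consequence of it: in the proof of Proposition~\ref{prop:combi-3}, the hair-trigger effect is invoked (see~\eqref{eq:dop}) precisely to manufacture an initial datum satisfying Weinberger's threshold condition~\eqref{initcondissepfrom0}, after which Lemma~\ref{lem:conv_to_theta-1} and Proposition~\ref{prop:conv_to_theta_cont_time} take over. Your plan reverses this flow, trying to use a Weinberger-type iteration to \emph{prove} the hair-trigger effect, and the gap you flag in your last paragraph is genuine and unresolved.

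Concretely: the comparison with the conservative linear equation is correct and yields, for each $R>0$ and each $T>0$, a bound $u(\cdot,T)\ge \eta_{R,T}\,\1_{B_R(T\m)}$ with some $\eta_{R,T}>0$ (this already follows from~\eqref{as:nondegglobal} and the convolution series; no CLT is needed, and in fact only the first moment~\eqref{firstglobalmoment} is assumed, so a genuine CLT is not available). But Lemma~\ref{lem:conv_to_theta-1} requires $u_0\ge\sigma$ on $B_{r_\sigma}(0)$, where $r_\sigma=r_\sigma(Q_T,\Tauin_T)$ is not controlled as $\sigma\downarrow 0$. You give no argument that the two can be matched, i.e.\ that for some $\sigma$ one simultaneously has $\eta_{R,T}\ge\sigma$ and $R\ge r_\sigma$. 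This is exactly the ``transition between the two steps'' that you name and leave open; without it the sketch is not a proof. The appeal to $c_*(\xi)>\m\cdot\xi$ from Theorem~\ref{thm:trwall} does not bridge the gap either: that inequality relies on~\eqref{as:expintdir}, which is \emph{not} among the hypotheses of Theorem~\ref{thm:hair-trigger}; when~\eqref{as:expintdir} fails one has $c_*(\xi)=\infty$, true but vacuous for your purpose. Finally, Proposition~\ref{prop:stab_Linf_ae} concerns stability under perturbation of the initial datum, not passage from discrete to continuous time; the latter is handled in the paper via uniform space--time continuity of $u$ (see the proof of Proposition~\ref{prop:conv_to_theta_cont_time}).
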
 
 In particular, if $\m=0\in\X$, then the solution to \eqref{eq:basic} converges to $\theta$ locally uniformly. Our main aim in the rest of the paper is to show that the zone where the solution to \eqref{eq:basic} becomes arbitrary close to $\theta$ (as time grows to $\infty$) can be chosen expanding to $\X$ linearly in time, cf. \eqref{convtotheta} below.  

\section{Long-time behavior in a direction}\label{sec:propindir}
In this Section, we are going to prove Theorem~\ref{thm:decayoutsidedirectional}. 
We start with the following simple observation. Let $0\leq u_0\in E$ be an initial condition to \eqref{eq:basic} and $u=u(x,t)\geq0$ be the corresponding solution. Then, by Duhamel's principle, $u(x,t)\leq w(x,t)$, $x\in\X$, $t>0$, where $w(x,t)$ is the solution to the linear equation 
\begin{equation}\label{eq:linear}
    \dfrac{\partial w}{\partial t}(x,t)=\varkappa^+ \int_{\X }a^+  (x-y)w(y,t)dy-m w(x,t)
\end{equation}
with the same initial condition $w(x,0)=u_0(x)$, $x\in\X$. We will find now an appropriate upper estimate for the solution to \eqref{eq:linear}. 

To this end, for any $\xi\in\S $ and $\la>0$, consider the following set of bounded functions on $\X$:
\begin{equation}\label{sefElambda}
E_{\la,\xi}(\X):=\bigl\{ f\in E \bigm| \|f\|_{\la,\xi}:=\esssup_{x\in\X} \lvert f(x)\rvert e^{\la  x\cdot\xi }<\infty \bigr\}.
\end{equation}
Evidently, for $f\in E$,
\[
\esssup\limits_{x\in\X} |f(x)| e^{\la  x\cdot\xi }<\infty \quad \text{if and only if}\quad
\esssup\limits_{x\cdot\xi\geq0} |f(x)| e^{\la  x\cdot\xi }<\infty,
\]
therefore,
\begin{equation*}
  E_{\la,\xi}(\X)\subset E_{\la',\xi}(\X), \quad \la>\la'>0, \ \xi\in\S .
\end{equation*}

\begin{proposition}\label{prop:expdecay}
Let $\xi\in\S $ and $\la>0$ be fixed and suppose that \eqref{as:expintdir} holds with  $\mu=\la$. Let $0\leq u_0\in E_{\la,\xi}(\X)$ and let $w=w(x,t)$ be the solution to \eqref{eq:linear} with the initial condition $w(x,0)=u_0(x)$, $x\in\X$. Then
\begin{equation}\label{expdecaying-lin}
  \| w(\cdot,t)\|_{\la,\xi}\leq \|u_0\|_{\la,\xi}e^{ p  t}, \quad t\geq0,
\end{equation}
where
\begin{equation}\label{bignu}
 p = p (\xi,\la)=\varkappa^+ \int_\X a^+(x) e^{\la x\cdot\xi }\,dx-m\in\R.
\end{equation}
\end{proposition}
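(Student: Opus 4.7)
The plan is to solve the linear equation \eqref{eq:linear} explicitly as an exponentially convergent operator series on $E=L^\infty(\X)$, and then to use positivity together with the multiplicativity of the exponential moment under convolution to bound the weighted norm term by term.

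First I would observe that $Lf := \varkappa^+ a^+ * f - m f$ is a bounded linear operator on $E$ (with operator norm at most $\varkappa^+ + m$, since $a^+\in L^1(\X)$ and $\|a^+\|_{L^1}=1$), so the unique solution to \eqref{eq:linear} with initial datum $u_0\in E$ is given by the norm-convergent series
\[
w(\cdot,t) = e^{tL}u_0 = e^{-mt}\sum_{n=0}^{\infty}\frac{(\varkappa^+ t)^n}{n!}\,a^{+*n}*u_0,
\]
where $a^{+*n}$ denotes the $n$-fold convolution of $a^+$ with itself, with the convention $a^{+*0}*u_0 := u_0$. Because $a^+\geq0$ and $u_0\geq0$, every term in this series is a nonnegative function of $x$.

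Next I would bound the weighted size of each term. From $u_0(y)\leq\|u_0\|_{\la,\xi}\,e^{-\la y\cdot\xi}$ for a.a.~$y\in\X$, the substitution $z = x-y$ gives
\[
e^{\la x\cdot\xi}\,(a^{+*n}*u_0)(x)\leq \|u_0\|_{\la,\xi}\int_{\X}a^{+*n}(z)\,e^{\la z\cdot\xi}\,dz.
\]
A short induction on $n$ using Fubini's theorem (valid by nonnegativity of the integrands) yields the multiplicative identity $\int_\X a^{+*n}(z)\,e^{\la z\cdot\xi}\,dz=\A_\xi(\la)^n$, and each factor is finite by assumption \eqref{as:expintdir} with $\mu=\la$.

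Finally, summing the resulting series,
\[
e^{\la x\cdot\xi}\,w(x,t)\leq \|u_0\|_{\la,\xi}\,e^{-mt}\sum_{n=0}^{\infty}\frac{(\varkappa^+\A_\xi(\la)\,t)^n}{n!}=\|u_0\|_{\la,\xi}\,e^{(\varkappa^+\A_\xi(\la)-m)t},
\]
and taking the essential supremum in $x\in\X$ delivers \eqref{expdecaying-lin} with $p$ as in \eqref{bignu}. I anticipate no serious obstacle here: the whole argument rests on positivity of the series expansion and convergence of the exponential series $\sum (\varkappa^+\A_\xi(\la)t)^n/n!$, both of which are automatic under the standing hypotheses and \eqref{as:expintdir}.
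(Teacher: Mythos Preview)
Your proposal is correct and follows essentially the same line as the paper's proof: write the solution of \eqref{eq:linear} as the norm-convergent series $w=e^{-mt}\sum_{n\ge0}\frac{(\varkappa^+ t)^n}{n!}a^{+*n}*u_0$, then bound each term in the $\|\cdot\|_{\la,\xi}$-norm by $\A_\xi(\la)^n\|u_0\|_{\la,\xi}$ and resum. The only cosmetic difference is that the paper obtains $\|a^{+*n}*u_0\|_{\la,\xi}\le \A_\xi(\la)^n\|u_0\|_{\la,\xi}$ by iterating the single-step estimate $\|a^+*f\|_{\la,\xi}\le \A_\xi(\la)\|f\|_{\la,\xi}$, whereas you first bound $u_0$ and then compute $\int a^{+*n}(z)e^{\la z\cdot\xi}\,dz=\A_\xi(\la)^n$ via Fubini; these are equivalent.
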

\begin{proof}
First, we note that, for any $a\in L^1(\X)$, $f\in E_{\la,\xi}(\X)$
\begin{align}\notag
\bigl\lvert(a*f)(x)e^{\la  x \cdot\xi }\bigr\rvert &\leq
\int_\X|a(x-y)|e^{\la  (x-y) \cdot\xi }|f(y)|e^{\la  y \cdot\xi }\,dy\\&\leq \|f\|_{\la,\xi} \int_\X |a(y)|e^{\la  y \cdot\xi }\,dy.\label{elaest}
\end{align}
Applying \eqref{elaest} to $a=a^+\in L^1(\X)$ and $f=u_0\in E_{\la,\xi}(\X)$, and using the notation \eqref{aplusexpla}, we will get
\[
  \|a^+*u_0\|_{\la,\xi} \leq \A_\xi(\la) \|u_0\|_{\la,\xi}.
\]
Iteratively applying \eqref{elaest} to $a=a^+$ and $f=a^{+,*(n-1)}*u_0\in E_{\la,\xi}(\X) $, $n\geq2$, where $a^{+,*(n-1)}:=a^+*\ldots * a^+$ (the convolution is taken $n-2$ times), we obtain
\[
  \|a^{+,*n}*u_0\|_{\la,\xi} \leq \bigl(\A_\xi(\la)\bigr)^n \|u_0\|_{\la,\xi}.
\]
Since the operator in the right hand side of \eqref{eq:linear} is bounded in $E$, we have
an explicit representation for the solution to \eqref{eq:linear}, namely,
\[
  w(x,t)=e^{-mt}u_0(x)+e^{-mt}\sum_{n=1}^\infty \frac{(\varkappa^+ t)^n}{n!}\bigl(a^{+,*n}*u_0\bigr)(x), \quad x\in\X,\ t\geq0.
\]
As a result, we obtain
\[
  \| w(\cdot,t)\|_{\la,\xi}\leq e^{-mt}\| u_0\|_{\la,\xi}+e^{-mt}\sum_{n=1}^\infty \frac{(\varkappa^+ t)^n}{n!} \bigl(\A_\xi(\la)\bigr)^n \|u_0\|_{\la,\xi},
\]
that is just equivalent to \eqref{expdecaying-lin}--\eqref{bignu}. 
\end{proof}

\begin{remark}
  It is straightforward to check, cf.~\cite[Lemma 2.1]{FKT100-2}, that the statement of Proposition~\ref{prop:expdecay} remains true if \eqref{as:expintdir} holds for some $\mu>\la$, provided that we assume, additionally, \eqref{as:bdd}.
\end{remark}

We can prove now Theorem~\ref{thm:decayoutsidedirectional}.

\begin{proof}[Proof of Theorem~\ref{thm:decayoutsidedirectional}]
Let $p_*:=p(\xi,\la_*)$ be given by \eqref{bignu}. Let $w=w(x,t)$ be the solution to \eqref{eq:linear} with the initial condition $w(x,0)=u_0(x)$, $x\in\X$. By \eqref{expdecaying-lin}, \eqref{sefElambda}, one has
\begin{equation}\label{mainassasf}
  0\leq u(x,t)\leq w(x,t)\leq \lVert u_0\rVert_{\la_*,\xi} \exp\bigl\{ p_* t - \la_* x\cdot\xi\bigr\}, \quad \text{a.a. } x\in\X.
\end{equation}
Next, by \eqref{eq:TauTxi} and Proposition~\ref{cstarsareequal},  for any $t>0$ and for all $x\in\X\setminus t\Tauout_\xi$, one has $x\cdot\xi\geq t c^*_1(\xi)+t\delta=t c_*(\xi)+t\delta$. Then, by~\eqref{eq:cstar},
\begin{multline*}
\inf_{x\notin t\Tauout_\xi } (\la_* x\cdot\xi)\geq t\la_* c_*(\xi)+t\la_* \delta\\=t\Bigl(\varkappa^+ \int_\X a^+(x) e^{\la_* x\cdot\xi }\,dx-m\Bigr)+t\la_* \delta=tp_*+t\la_* \delta.
\end{multline*}
Therefore, \eqref{mainassasf} implies the statement.
\end{proof}

\begin{remark}\label{rem:ads1}
 The assumption $u_0\in E_{\la_*,\xi}(\X)$ is close, in some sense, to the weakest possible assumption on an initial condition $u_0\in\Ltheta$ for the equation \eqref{eq:basic} to have
 \begin{equation}\label{convtp0general}
    \lim_{t\to\infty}\esssup_{x\notin t\Tauout_{\xi}} u(x,t)=0,
 \end{equation}
 for an arbitrary open set $\Tauout_{\xi}\supset \Tau_{1,\xi}$, where $\Tau_{1,\xi}$ is defined by \eqref{eq:TauTxi}.
 Indeed, take any $\la_1,\la$ with $0<\la_1<\la<\la_*=\la_*(\xi)$. By~Theorem~\ref{thm:trwall}, there exists a traveling wave solution to \eqref{eq:basic} with a profile $\psi_1\in\M$ such that $\la_0(\psi_1)=\la_1$. By \cite[Theorem 1.3]{FKT100-2} (with $j=1$ as $\la_1<\la_*$) we have that $\psi_1(t)\sim De^{-\la_1 t}$, $t\to\infty$. It is easily seen that one can choose a function $\varphi\in\M\cap C(\R)$ such that there exist $p>0$, $T>0$, such that $\varphi(t)\geq\psi_1(t)$, $t\in\R$ and $\varphi(t)=pe^{-\la t}$, $t>T$. Take now $u_0(x)=\varphi(x\cdot\xi)$, $x\in\X$. We have $u_0\in E_{\la,\xi}(\X)\setminus E_{\la_*,\xi}(\X)$. Then, by \cite[Proposition~3.3]{FKT100-1}, the corresponding solution has the form $u(x,t)=\phi(x\cdot\xi,t)$. By Proposition~\ref{prop:fullcomp} applied to the equation \eqref{eq:basic_one_dim}, $\phi(s,t)\geq\psi_1(s-c_1t)$, $s\in\R$, $t\geq0$, where $c_1=\la_1^{-1}(\varkappa^+\A_\xi(\la_1)-m)>c_*(\xi)$, cf.~\cite[formula~(1.13)]{FKT100-2}. Take $c\in (c_*(\xi),c_1)$ and consider an open set $\Tauout_\xi: =\{x\in\X\mid x\cdot\xi<c\}$, then $\Tau_{1,\xi}\subset \Tauout_\xi \subset \{x\in\X\mid x\cdot\xi\leq c_1\}=:A_1$. One has
 \begin{align*}
 \sup_{x\notin t\Tauout_{\xi}}u(x,t)&\geq \sup_{x\in tA_1\setminus t\Tauout_{\xi}}\phi(x\cdot\xi,t)\\
 &\geq \sup_{ct<s\leq c_1t}\psi_1(s-c_1t)=\psi_1(ct-c_1t)>\psi_1(0),
 \end{align*}
 as $c<c_1$ and $\psi_1$ is decreasing. As a result, \eqref{convtp0general} does not hold.

 On the other hand, if $\psi_*\in\M$ is a profile with the minimal speed $c_*(\xi)\neq0$ and if $j=2$, cf.~\cite[Proposition~3.1]{FKT100-2}, then $u_0(x):=\psi_*(x\cdot\xi)$ does not belong to the space $E_{\la_*,\xi}(\X)$, and the arguments above do not contradict
\eqref{convtp0general} anymore. In the next remark, we consider this case in more details.
\end{remark}

\begin{remark}\label{rem:ads2}
  In connection with the previous remark, it is worth noting also that one can easily generalize Theorem~\ref{thm:decayoutsidedirectional} in the following way. Let $u_0\in E_{\la,\xi}(\X)\cap \Ltheta$, for some $\la\in (0,\la_*]$, and let $u\in\tXinf$ be the corresponding solution to \eqref{eq:basic}. Consider the set $A_{c,\xi}:=\bigl\{ x\in\X \mid x\cdot\xi\leq c\bigr\}$, where  $c=\la^{-1}(\varkappa^+\A_\xi(\la)-m)$
   cf.~\cite[formula~(1.12)]{FKT100-2}. Then, for any open set $B_{c,\xi}\supset A_{c,\xi}$ with $\delta_c:=\dist(A_{c,\xi},\X\setminus B_{c,\xi})>0$, one gets
\begin{equation}\label{equivasdas}
  \esssup_{x\notin t B_{c,\xi}} u(x,t)\leq \lVert u_0\rVert_{\la,\xi} e^{-\la\delta_c t}.
\end{equation}
Therefore, if $u_0(x)=\psi_*(x\cdot \xi)$, where $\psi_*$ is as in Remark~\ref{rem:ads1} above, then, evidently, $u_0\in E_{\la,\xi}(\X)$, for any $\la\in(0,\la_*)$. Then, for any open $\Tauout_\xi \supset\Tau_{1,\xi}$ with $\delta:=\dist (\Tau_{1,\xi},\X\setminus\Tauout_\xi )>0$  one can choose, for any $\eps\in(0,1)$, $c_1=c_*(\xi)+\delta\eps$. By Theorem~\ref{thm:trwall}, there exists a unique $\la_1=\la_1(\eps)\in(0,\la_*)$ such that $c_1=\la_1^{-1}(\varkappa^+\A_\xi(\la_1)-m)$. Then $u_0\in E_{\la_1,\xi}(\X)$ and
$A_{c_1,\xi}\subset \Tauout_\xi $, i.e. $\Tauout_\xi $ may be considered as a set $B_{c_1,\xi}$, cf.~above. As a result, \eqref{equivasdas} gives \eqref{supconvto0xi}, with the constant $\lVert u_0\rVert_{\la_1,\xi}<\lVert u_0\rVert_{\la_*,\xi}$, and with $\la_*\delta$ replaced by $\la_1 \delta(1-\eps)$. Note that, clearly, $\lVert u_0\rVert_{\la_1,\xi}\nearrow\lVert u_0\rVert_{\la_*,\xi}$, $\la_1\nearrow\la_*$, $\eps\to0$.
\end{remark}

\section{Long-time behavior in different directions}

\subsection{Convergence to $0$}\label{subsec:convto0}

Through this section we will assume that the conditions \eqref{as:beta}--\eqref{as:nondegglobalmod} hold. Let the convex closed set $\Tau_*$ be given by \eqref{eq:condonfrontglobal}. Define, cf. \eqref{eq:TauTxi},
\begin{equation}
\Tau_{T}=\left\{ x\in\X |x\cdot\xi\leq c_T^{*}(\xi),\ \xi\in\S \right\}, \quad T>0.\label{eq:TauT}
\end{equation}
By \eqref{eq:TauTxi}-\eqref{taut=ttau1-dir},
\begin{equation}\label{TauT=TTau1}
  \Tau_{T}=\bigcap_{\xi\in\S }\Tau_{T,\xi}=\bigcap_{\xi\in\S }T\Tau_{1,\xi}=T\Tau_1=T\Tau_*, \quad T>0;
\end{equation}
in particular, $\Tau_*=\Tau_1$.

\begin{proposition}\label{prop:verynew}
Let \eqref{as:beta}--\eqref{as:nondegglobalmod} hold. Then, cf.~\eqref{firstfullmoment}, $\m$ is an interior point of $\Tau_*$.
\end{proposition}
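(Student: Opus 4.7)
The plan is to combine the pointwise strict inequality $c_*(\xi)>\m\cdot\xi$ supplied by Theorem~\ref{thm:trwall}(2) with the lower semicontinuity of $c_*$ on the compact sphere $\S$, upgrading the pointwise bound to a uniform one, and then concluding via an elementary Cauchy--Schwarz argument.

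First I would dispose of the trivial case: if \eqref{as:expintdir} fails for every $\xi\in\S$, then $c_*(\xi)=\infty$ for all $\xi$ by convention, so $\Tau_*=\X$ and the claim is immediate. So assume some $\xi$ satisfies \eqref{as:expintdir}. Next I verify that, at every $\xi\in\S$ for which \eqref{as:expintdir} holds, all the hypotheses of Theorem~\ref{thm:trwall}(2) are met: \eqref{as:beta}--\eqref{as:bdd} are standing assumptions; \eqref{firstglobalmoment} yields \eqref{as:firstmoment} in every direction; and \eqref{as:nondegglobalmod} implies \eqref{as:nondegglobal}, which gives \eqref{as:nondegdir} with $r(\xi)=0$ in every direction. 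Noting from \eqref{firstdirmoment} and \eqref{firstfullmoment} that $\m_\xi=\m\cdot\xi$, Theorem~\ref{thm:trwall}(2) therefore delivers
\begin{equation*}
f(\xi):=c_*(\xi)-\m\cdot\xi>0, \qquad \xi\in\S,
\end{equation*}
understood in $\R\cup\{+\infty\}$, with $f(\xi)=+\infty$ precisely at those $\xi$ where \eqref{as:expintdir} fails.

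The key step is to pass from pointwise positivity to a uniform lower bound. By Proposition~\ref{cstarsareequal}, $c_*(\xi)=c_1^*(\xi)$, and the paper records that $c_T^*$ is lower semicontinuous on $\S$ (the Weinberger--scheme output from \cite[Propositions~5.1, 5.2]{Wei1982a}). Since $\xi\mapsto\m\cdot\xi$ is continuous, $f$ is the difference of a lower semicontinuous extended-real-valued function and a continuous one, hence lower semicontinuous on $\S$. A lower semicontinuous function on a compact set attains its infimum, and since $f>0$ pointwise, this infimum is strictly positive; call it $2\delta$.

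Finally, for any $y\in\X$ with $|y|<\delta$ and any $\xi\in\S$, Cauchy--Schwarz gives $y\cdot\xi\leq|y|<\delta\leq f(\xi)=c_*(\xi)-\m\cdot\xi$, whence $(\m+y)\cdot\xi\leq c_*(\xi)$ for every $\xi\in\S$. Therefore $\m+y\in\Tau_*$, so $B_\delta(\m)\subset\Tau_*$ and $\m\in\inter(\Tau_*)$. The only genuine obstacle is the uniform strict positivity of $f$; once one has lower semicontinuity of $c_*$ on all of $\S$ (taking values in the extended reals, so that directions with $c_*(\xi)=\infty$ cause no trouble), the compactness of $\S$ does the rest, and the final inclusion is routine.
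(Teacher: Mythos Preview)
Your proof is correct and follows essentially the same route as the paper: establish the strict pointwise inequality $c_*(\xi)>\m\cdot\xi$ via Theorem~\ref{thm:trwall}(2), invoke the lower semicontinuity of $c_1^*=c_*$ from Weinberger's scheme (together with Proposition~\ref{cstarsareequal}) to pass to a uniform positive gap on the compact sphere, and conclude that a ball around $\m$ lies in $\Tau_*$. The paper's argument is the same in all essentials; your version is merely more explicit about verifying the hypotheses of Theorem~\ref{thm:trwall}(2) and about handling directions with $c_*(\xi)=\infty$ as extended-real values.
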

\begin{proof}
Firstly, if \eqref{as:expintdir} fails for all $\xi\in\S$ then $\Tau_*=\X$ and the statement is trivial. Next, for an arbitrary $\xi\in\S$ such that \eqref{as:expintdir} holds, we have, by \eqref{firstdirmoment} and the inequality in \eqref{eq:cstar}, that
\begin{equation}\label{dsasasa}
   \m\cdot\xi =\varkappa^+ \int_\X x\cdot \xi a^+(x)\,dx=\m_\xi<c_*(\xi).
\end{equation}
Therefore, cf.~\eqref{eq:condonfrontdir}, $\m\in \Tau_*(\xi)$, $\xi\in\S$. 
Next, as it was already mentioned, by \cite[Proposition~5.1]{Wei1982a}, the function $c_1^*(\xi)$ is lower-semicontinuous in $\xi\in\S$. Therefore, by \eqref{ct=tc}, the function 
$c_*(\xi)-\m_\xi>0$ is lower-semicontinuous on the compact $\S$, and hence attains its minimum, which we denote by $d_0>0$. As a result, $\m\cdot \xi <c_*(\xi)-d_0$ for all $\xi\in\S$, and therefore, an open ball with center at $\m$ and radius $d_0$ belongs to the interior of 
$\Tau_*(\xi)$, for each $\xi\in\S$. From this, by \eqref{eq:condonfrontglobal}, one gets the statement.
\end{proof}

\begin{proposition}\label{prop:front_is_non-empty}
Let \eqref{as:beta}--\eqref{as:expintglobal} hold. Then, $\Tau_*=\Tau_1$ is a compact.
\end{proposition}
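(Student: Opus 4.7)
The plan is to show that $\Tau_*=\Tau_1$ is bounded (it is already closed as an intersection of closed half-spaces), and the key is a uniform upper bound on $c_*(\xi)$ over $\xi\in\S$ that comes directly from \eqref{as:expintglobal}.

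First I would observe that \eqref{as:expintglobal} implies \eqref{as:expintdir} uniformly: for every $\xi\in\S$,
\begin{equation*}
\A_\xi(\mu_d)=\int_\X a^+(x)e^{\mu_d x\cdot\xi}\,dx\leq \int_\X a^+(x)e^{\mu_d |x|}\,dx=:K<\infty,
\end{equation*}
since $x\cdot\xi\leq|x|$. Thus $\sigma_\xi(a^+)\geq\mu_d$ for each $\xi\in\S$, and, because the full set of assumptions \eqref{as:beta}--\eqref{as:expintglobal} of the proposition also ensures \eqref{as:bdd}, \eqref{as:firstmoment} and \eqref{as:nondegdir} at every $\xi\in\S$ (the latter with $r(\xi)=0$ via \eqref{as:nondegglobalmod}), the second item of Theorem~\ref{thm:trwall} applies in every direction.

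Next I would use the min-formula \eqref{eq:cstar} and specialize $\la=\mu_d$ to get a $\xi$-independent bound. Namely,
\begin{equation*}
c_*(\xi)=\min_{\la>0}\frac{\varkappa^+\A_\xi(\la)-m}{\la}\leq \frac{\varkappa^+\A_\xi(\mu_d)-m}{\mu_d}\leq \frac{\varkappa^+ K-m}{\mu_d}=:C.
\end{equation*}
Hence $\sup_{\xi\in\S}c_*(\xi)\leq C<\infty$.

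Finally I would convert this directional bound into a ball. For any $x\in\Tau_*$, the defining property \eqref{eq:condonfrontglobal} gives $x\cdot\xi\leq c_*(\xi)\leq C$ for every $\xi\in\S$. If $x\neq 0$, choosing $\xi=x/|x|$ yields $|x|\leq C$. Therefore $\Tau_*\subset\overline{B_C(0)}$, and being closed as well (it is the intersection of closed half-spaces), $\Tau_*$ is compact, which finishes the proof.

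There is really no substantial obstacle: the work is entirely done by \eqref{as:expintglobal}, which makes the exponential-moment estimate uniform in direction, so that the variational formula \eqref{eq:cstar} produces a dimension-wide upper bound by simply evaluating it at the single exponent $\mu_d$.
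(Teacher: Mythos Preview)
Your proof is correct but proceeds differently from the paper's. The paper does not derive a uniform-in-$\xi$ upper bound on $c_*(\xi)$; instead it fixes an orthonormal basis $\{e_i\}$, uses only that $c_*(\pm e_i)<\infty$ for each $i$ (which follows from \eqref{as:expintglobal} via Theorem~\ref{thm:trwall}), and then bounds any $x\in\Tau_*$ via $|x|\le\sum_i|x\cdot e_i|\le\sum_i\max\{|c_*(e_i)|,|c_*(-e_i)|\}$, using the two-sided inequality $-c_*(-\xi)\le x\cdot\xi\le c_*(\xi)$. Your route is more direct and more quantitative: evaluating the variational formula \eqref{eq:cstar} at the single exponent $\mu_d$ produces an explicit containing radius $C=(\varkappa^+K-m)/\mu_d$, and the choice $\xi=x/|x|$ finishes immediately. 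The paper's argument, on the other hand, uses slightly less machinery---only the finiteness of $c_*(\cdot)$ at finitely many fixed directions, not the explicit min-formula---so it would survive in settings where one knows existence of minimal-speed traveling waves without the characterization \eqref{eq:cstar}.
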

\begin{proof}
First, \eqref{as:expintglobal} implies that  \eqref{as:expintdir} holds for all $\xi\in\S$. Then, by Theorem~\ref{thm:trwall}, $c_*(\xi)<\infty$ for all $\xi\in\S$.
Next, by~\eqref{firstdirmoment} and Proposition~\ref{prop:verynew}, for any orthonormal basis $\{e_i\mid 1\leq i \leq d\}\subset\S $, $\m=\sum\limits_{i=1}^d \m_{e_i}e_i\in\inter(\Tau_*)$. 
By Theorem~\ref{thm:trwall}, $x\in\Tau_*$ implies that, for any fixed $\xi\in\S $, $x\cdot \xi\leq c_*(\xi)$ and $x\cdot (-\xi)\leq c_*(-\xi)$, i.e.
\begin{equation}\label{dirbundforfront}
  -c_*(-\xi)\leq x\cdot\xi\leq c_*(\xi), \quad x\in\Tau_*, \ \xi\in\S .
\end{equation}
Then \eqref{dirbundforfront} implies
\[
\lvert x\cdot\xi\rvert\leq \max\bigl\{\lvert c_*(\xi)\rvert,\lvert c_*(-\xi)\rvert\bigr\}, \quad
x\in\Tau_*, \ \xi\in\S ;
\]
in particular, for an orthonormal basis $\{e_i\mid 1\leq i\leq d\}$ of $\X$, one gets
\[
|x|\leq \sum_{i=1}^d \lvert x\cdot e_i\rvert\leq \sum_{i=1}^d
\max\bigl\{\lvert c_*(e_i)\rvert,\lvert c_*(-e_i)\rvert\bigr\}=:R<\infty,\quad x\in\Tau_*,
\]
that fulfills the statement.
\end{proof}

\begin{remark}
Here and in Propositions~\ref{prop:combi-1}, \ref{prop:combi-2}, the condition \eqref{as:nondegglobalmod} can be weaken to \eqref{as:nondegglobal}. As a matter of fact, it is enough to assume that \eqref{as:nondegdir} holds for all $\xi\in\S$.
\end{remark}

\begin{remark}\label{rem:justequiv}
 Since $\int_{x\cdot \xi\leq0}a^+(x)e^{\la x\cdot\xi}\,dx\in[0,1]$, $\xi\in\S $, $\la>0$, we have the following observation. If, for some $\xi\in\S $, there exist $\mu^\pm>0$, such that, cf.~\eqref{aplusexpla}, $\A_{\pm\xi}(\mu^\pm)<\infty$, i.e. if \eqref{as:expintdir} holds for both $\xi$ and $-\xi$, then, for $\mu=\min\{\mu^+,\mu^-\}$,
 \begin{align}
&\quad \int_\X a^+(x) e^{\mu |x\cdot\xi|}\,dx=
 \int_{x\cdot\xi\geq0} a^+(x) e^{\mu x\cdot\xi}\,dx
 +\int_{x\cdot\xi<0} a^+(x) e^{-\mu x\cdot\xi}\,dx\notag
 \\&\leq \int_{x\cdot\xi\geq0} a^+(x) e^{\mu^+ x\cdot\xi}\,dx
 +\int_{x\cdot(-\xi)> 0} a^+(x) e^{\mu^- x\cdot(-\xi)}\,dx<\infty.
 \label{expmoddir}
\end{align}
Let now $\{e_i\mid 1\leq i\leq d\}$ be an orthonormal basis in $\X$. Let \eqref{as:expintdir} holds for $2d$ directions $\{\pm e_i\mid 1\leq i\leq d\}\subset\S $ and let $\mu_i=\min\{\mu(e_i),\mu(-e_i)\}$, $1\leq i\leq d$, cf.~\eqref{expmoddir}. Set $\mu=\frac{1}{d}\min\{\mu_i\mid 1\leq i\leq d\}$. Then, by the triangle and Jensen's inequalities and \eqref{expmoddir}, one has
\begin{align*}
      \int_\X a^+(x) e^{\mu|x|}\,dx
&\leq \int_\X a^+(x) \exp\biggl(\sum_{i=1}^d \frac{1}{d}\mu_i|x\cdot e_i|\biggr)\,dx\\
&\leq \sum_{i=1}^d \frac{1}{d} \int_\X a^+(x) e^{\mu_i|x\cdot e_i|}\,dx<\infty.
\end{align*}
Therefore, \eqref{as:expintglobal} is equivalent to that \eqref{as:expintdir} holds for all $\xi\in\S $. 
\end{remark}

\begin{remark}\label{rem:minspeedopospos}
  It is worth noting that, by \eqref{eq:cstar}, \eqref{firstdirmoment}, the following inequality holds, cf.~\eqref{dirbundforfront},
\begin{equation*}
  c_*(\xi)+c_*(-\xi)>\m_\xi+\m_{-\xi}=0.
\end{equation*}
\end{remark}

\medskip

We are ready to prove now the first item of Theorem~\ref{thm:combi}.
\begin{proposition}\label{prop:combi-1}
Let the conditions \eqref{as:beta}--\eqref{as:nondegglobalmod} hold and there exists $\xi\in\S$, such that \eqref{as:expintdir} holds. Let $u_0\in E_\theta^+$ be such that \eqref{eq:fastinitcond} holds for all those $\xi\in\S$ where $c_*(\xi)<\infty$. Let $u\in\x_\infty$ be the corresponding classical solution to \eqref{eq:basic} on $\R_+$. Then, for any compact set $\Tauin\subset\X\setminus\Tau_*$, there exist $\nu=\nu(\Tauin)>0$ and $D=D(u_0,\Tauin)>0$, such that
\begin{equation}\label{eq:globalaboveresult-prop}
  \esssup_{x\in t\Tauin} u(x,t)\leq D  e^{-\nu t}, \quad t>0.
\end{equation}
\end{proposition}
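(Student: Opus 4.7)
The plan is to reduce the multi-dimensional decay statement to finitely many one-dimensional decay statements covered by Theorem~\ref{thm:decayoutsidedirectional}. The starting point is the identity
\[
      \X\setminus\Tau_*=\bigcup_{\xi\in\S,\ c_*(\xi)<\infty}\bigl\{x\in\X\mid x\cdot\xi> c_*(\xi)\bigr\},
\]
which follows directly from \eqref{eq:condonfrontglobal} together with the convention $c_*(\xi)=\infty$ whenever \eqref{as:expintdir} fails (in which case $\Tau_*(\xi)=\X$ contributes nothing to the complement).

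First, I would fix $x\in\Tauin$. Since $x\notin\Tau_*$, there exists $\xi_x\in\S$ with $c_*(\xi_x)<\infty$ and $x\cdot\xi_x>c_*(\xi_x)$. Setting $2\eps_x:=x\cdot\xi_x-c_*(\xi_x)>0$, the open half-space $V_x:=\{y\in\X\mid y\cdot\xi_x>c_*(\xi_x)+\eps_x\}$ contains $x$. Using that $\Tauin$ is compact, I extract a finite subcover $\Tauin\subset V_{x_1}\cup\cdots\cup V_{x_N}$ and relabel $\xi_i:=\xi_{x_i}$, $\eps_i:=\eps_{x_i}$, $\la_i:=\la_*(\xi_i)$. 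By \eqref{eq:fastinitcond}, $\lVert u_0\rVert_{\la_i,\xi_i}<\infty$ for each $i$, since $c_*(\xi_i)<\infty$.

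Next, for each $i=1,\ldots,N$, I apply Theorem~\ref{thm:decayoutsidedirectional} in the direction $\xi_i$ with the open set
\[
   \Tauout_{\xi_i}:=\bigl\{y\in\X\mid y\cdot\xi_i< c_*(\xi_i)+\eps_i\bigr\}\supset\Tau_*(\xi_i),
\]
which satisfies $\dist(\Tau_*(\xi_i),\X\setminus\Tauout_{\xi_i})=\eps_i>0$. Theorem~\ref{thm:decayoutsidedirectional} yields
\[
      \esssup_{z\notin t\Tauout_{\xi_i}}u(z,t)\leq \lVert u_0\rVert_{\la_i,\xi_i}\,e^{-\la_i\eps_i t},\quad t>0.
\]
Finally, for any $y\in\Tauin$ choose $i$ with $y\in V_{x_i}$; then $y\cdot\xi_i>c_*(\xi_i)+\eps_i$, hence $(ty)\cdot\xi_i>t(c_*(\xi_i)+\eps_i)$, i.e.\ $ty\notin t\Tauout_{\xi_i}$. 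Therefore
\[
      \esssup_{x\in t\Tauin}u(x,t)\leq\max_{1\leq i\leq N}\lVert u_0\rVert_{\la_i,\xi_i}\,e^{-\la_i\eps_i t}\leq D\,e^{-\nu t},
\]
with $D:=\max_i\lVert u_0\rVert_{\la_i,\xi_i}$ and $\nu:=\min_i\la_i\eps_i>0$, giving \eqref{eq:globalaboveresult-prop}.

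The only real content is the finite-cover reduction — it is elementary but must be set up so that the margins $\eps_i$ remain strictly positive simultaneously for all finitely many directions. There is no obstruction from the possibly unbounded $\Tau_*$ here, because we only ever require $x$ to lie strictly on the outer side of a single directional half-space, not on the outer side of all of $\Tau_*$ at once.
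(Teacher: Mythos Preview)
Your proof is correct and follows essentially the same strategy as the paper: cover the compact $\Tauin$ by finitely many open half-spaces associated to directions $\xi_i$ with $c_*(\xi_i)<\infty$, apply Theorem~\ref{thm:decayoutsidedirectional} in each direction, and take the worst of the finitely many exponential rates. Your explicit margins $\eps_i$ and direction-specific sets $\Tauout_{\xi_i}$ are in fact a cleaner way to secure the positive-distance hypothesis of Theorem~\ref{thm:decayoutsidedirectional} than the paper's choice of a single $\Tauout=\X\setminus\Tauin$ for every $i$ (which would require $\Tau_*(\xi_i)\cap\Tauin=\emptyset$ for each $i$, something the bare cover $\Tauin\subset\bigcup_i\{x\cdot\xi_i>c_*(\xi_i)\}$ does not by itself ensure).
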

\begin{proof}
Since there exists $\xi\in\S$, such that \eqref{as:expintdir} holds, we will get from \eqref{eq:condonfrontglobal}, that $\Tau_*\neq\X$. Therefore,
\[
  \Tau_*=\bigcap_{\substack{\xi\in\S:\\ c_*(\xi)<\infty} }\bigl\{ x\in\X \mid x\cdot\xi\leq c_*(\xi)\bigr\}.
\]
Then a closed set $\Tauin\subset\X\setminus\Tau_*$ satisfies
\[
  \Tauin\subset \bigcup_{\substack{\xi\in\S:\\ c_*(\xi)<\infty} } \bigl\{ x\in\X \mid c_*(\xi)< x\cdot\xi\bigr\}.
\]
Since $\Tauin$ is a compact, there exist $K\in\N$ and $\xi_1,\ldots,\xi_K\in\S$, such that $c_*(\xi_i)<\infty$, $1\leq i\leq K$ and
\[
  \Tauin\subset \bigcup_{1\leq i\leq K} \bigl\{ x\in\X \mid x\cdot\xi_i> c_*(\xi_i)\bigr\}.
\]
Therefore,
\[
  \Tauout:=\X\setminus \Tauin\supset \bigcap_{1\leq i\leq K} \Tau_*(\xi_i).
\]
Clearly, $\Tauout$ is an open subset of $\X$ and $\Tauout\supset \Tau_*(\xi_i)$ for $1\leq i\leq K$. By the assumption on $u_0$ and the condition $c_*(\xi_i)<\infty$, $1\leq i\leq K$, the inequality \eqref{eq:fastinitcond} holds for all $\xi=\xi_i$, $1\leq i\leq K$. 

Since $\Tau_*(\xi_i)$ is a closed set and $\Tauin$ is a compact, we have that
 \[
  \nu_{i} :=\la_*(\xi_i) \, \dist (\Tau_*(\xi_i),\Tauin)>0, \quad 1\leq i\leq K.
\]
The inequality $c_*(\xi_i)<\infty$ implies that the condition \eqref{as:expintdir} holds for $\xi=\xi_i$, $1\leq i\leq K$. Therefore, by Theorem~\ref{thm:decayoutsidedirectional}, one gets, for any $1\leq i\leq K$,
\[
  \esssup_{x\in t\Tauin} u(x,t) =\esssup_{x\notin t\Tauout} u(x,t) \leq \lVert u_0\rVert_{\la_*(\xi_i),\xi_i} e^{-\nu_{i} t}\leq D e^{-\nu t}, \quad t>0,
\]
where $\nu:=\min\{\nu_{i}\mid 1\leq i\leq K\}$, $D:=\max\{\lVert u_0\rVert_{\la_*(\xi_i),\xi_i}\mid 1\leq i\leq K\}$.
\end{proof}

Prove now the second item of Theorem~\ref{thm:combi}.

\begin{proposition}\label{prop:combi-2}
In conditions and notations of Proposition~\ref{prop:combi-1}, we assume, additionally, that the set $\Tau_*$ is bounded (and hence compact). Then \eqref{eq:globalaboveresult} holds for any closed set $\Tauin\subset \X\setminus\Tau_*$.
\end{proposition}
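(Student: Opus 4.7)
The new difficulty compared with Proposition~\ref{prop:combi-1} is that $\Tauin$ need not be compact, so one can no longer extract a finite subcover of $\Tauin$ by the separating half-spaces $\{x\cdot\xi>c_*(\xi)\}$ directly. My plan is to split $\Tauin$ into a bounded part, handled verbatim by Proposition~\ref{prop:combi-1}, and a far part $\Tauin\setminus B_R(0)$ for large $R$, controlled uniformly by a single finite family of directions $\xi_1,\ldots,\xi_L\in\S$ extracted from the compactness of $\Tau_*$.

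The main step, and the only nontrivial one, is the following geometric lemma. Put $\mathcal{E}:=\{\xi\in\S\mid c_*(\xi)<\infty\}\neq\emptyset$, so that $\Tau_*=\bigcap_{\xi\in\mathcal{E}}\Tau_*(\xi)$. Since $\Tau_*$ is bounded, no $v_0\in\S$ can satisfy $v_0\cdot\xi\leq 0$ for every $\xi\in\mathcal{E}$: otherwise the ray $\{\m+tv_0\mid t\geq 0\}$ would lie in $\Tau_*$ (using $\m\in\Tau_*$ from Proposition~\ref{prop:verynew}), contradicting boundedness. Hence for each $v\in\S$ some $\xi(v)\in\mathcal{E}$ satisfies $v\cdot\xi(v)>0$, and a covering/continuity argument on the compact $\S$ produces finitely many $\xi_1,\ldots,\xi_L\in\mathcal{E}$ and $\delta_0>0$ with $\max_{1\leq l\leq L}v\cdot\xi_l\geq\delta_0$ for every $v\in\S$. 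This replaces the naive ``$c_*(\xi)<\infty$ for all $\xi$'' one might hope for: compactness of $\Tau_*$ does not in general force $\mathcal{E}=\S$, so the positively-spanning finite family has to be extracted from inside $\mathcal{E}$, and this is what I expect to be the subtlest point.

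Fix $\eps>0$ and pick $R>\delta_0^{-1}(\eps+\max_l c_*(\xi_l))$. For any $x\in\X$ with $|x|\geq R$, the lemma applied to $v=x/|x|$ gives some $l$ with $x\cdot\xi_l\geq|x|\delta_0>c_*(\xi_l)+\eps$. Decompose $\Tauin=\Tauin_1\cup\Tauin_2$ with $\Tauin_1:=\Tauin\cap\overline{B_R(0)}$ compact and $\Tauin_2:=\Tauin\setminus B_R(0)$. Proposition~\ref{prop:combi-1} yields constants $D_1,\nu_1>0$ with $\esssup_{x\in t\Tauin_1}u(x,t)\leq D_1 e^{-\nu_1 t}$. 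For $\Tauin_2$, for each $l$ the open set $\Tauout_l:=\{x\in\X\mid x\cdot\xi_l<c_*(\xi_l)+\eps\}$ contains $\Tau_*(\xi_l)$ with $\dist(\Tau_*(\xi_l),\X\setminus\Tauout_l)=\eps$, so Theorem~\ref{thm:decayoutsidedirectional} gives $\esssup_{x\notin t\Tauout_l}u(x,t)\leq\lVert u_0\rVert_{\la_*(\xi_l),\xi_l}e^{-\la_*(\xi_l)\eps t}$; the required bound \eqref{eq:fastinitcond} in direction $\xi_l$ is part of the hypotheses inherited from Proposition~\ref{prop:combi-1}, since $c_*(\xi_l)<\infty$. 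By the choice of $R$, each $x\in t\Tauin_2$ lies outside at least one $t\Tauout_l$, so taking the maximum over $l$ bounds $\esssup_{x\in t\Tauin_2}u(x,t)$ by $D_2 e^{-\nu_2 t}$ with $D_2:=\max_l\lVert u_0\rVert_{\la_*(\xi_l),\xi_l}$ and $\nu_2:=\eps\min_l\la_*(\xi_l)$. Combining both estimates with $D:=\max(D_1,D_2)$, $\nu:=\min(\nu_1,\nu_2)$ gives \eqref{eq:globalaboveresult}.
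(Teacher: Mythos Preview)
Your argument is correct, but the route differs from the paper's. The paper avoids splitting $\Tauin$ into near and far parts altogether: it invokes \cite[Lemma~7.2]{Wei1982a}, which says that because $\Tau_1=\Tau_*$ is bounded and nonempty, any open set $\Tauout\supset\Tau_1$ contains a polytope $M=\{x\mid x\cdot\xi_i\le c_1^*(\xi_i)+\eps,\ 1\le i\le K\}$ with $\Tau_1\subset M\subset\Tauout$. Taking $\Tauout=\X\setminus\Tauin$ and shrinking each half-space slightly to $\Tauout_{\xi_i}=\{x\cdot\xi_i<c_1^*(\xi_i)+\eps/2\}$, one gets $\X\setminus\Tauout\subset\bigcup_i(\X\setminus\Tauout_{\xi_i})$ directly, and then Theorem~\ref{thm:decayoutsidedirectional} finishes the proof exactly as in your far-part estimate.

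Your approach trades the citation of Weinberger's polyhedral approximation lemma for a self-contained geometric fact: boundedness of $\Tau_*$ forces $\mathcal{E}$ to positively span $\X$, so finitely many $\xi_l\in\mathcal{E}$ suffice to cover every direction with a positive inner product. This is essentially an elementary reproof of the relevant content of Weinberger's lemma, and it has the advantage of being explicit about why only finitely many directions from $\mathcal{E}$ are needed even when $\mathcal{E}$ may be a proper subset of $\S$. The cost is the extra near/far decomposition and the appeal to Proposition~\ref{prop:combi-1} for the bounded piece, whereas the paper handles all of $\Tauin$ in one stroke.
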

\begin{proof}
Consider the set $\mathcal{M}$ of all subsets from $\X$ of the following form:
\begin{equation}
M=M_{\eps,K,\xi_1,\ldots,\xi_K}=\bigl\{x\in\X \mid
x\cdot\xi_i\leq c^*_1(\xi_i)+\eps,\ i=1,\ldots,K\bigr\},\label{eq:eps_neighb_ofGamma}
\end{equation}
for some $\eps>0$, $K\in\N$, $\xi_1,\ldots,\xi_K\in\S $.
By \eqref{TauT=TTau1} and Proposition~\ref{prop:verynew}, the set $\Tau_1=\Tau_*$ is bounded and nonempty. Take an arbitrary closed set $\Tauin\subset \X\setminus\Tau_*$, and consider the open set $\Tauout:=\X\setminus\Tauin\supset\Tau_*=\Tau_1$. Then, by \cite[Lemma~7.2]{Wei1982a}, there exist $\eps>0$, $K\in\N$, $\xi_1,\ldots,\xi_K\in\S $ and a set $M\in\mathcal{M}$ of the form \eqref{eq:eps_neighb_ofGamma}, such that
 \begin{equation}\label{maininclW}
   \Tau_*=\Tau_1\subset M\subset\Tauout.
\end{equation}
Choose now
\[
\Tauout_{\xi_i}=\Bigl\{x\in\X\Bigm\vert x\cdot\xi_i <c_1^*(\xi_i)+\frac{\eps}{2}\Bigr\}\supset \Tau_{1,\xi_i}, \quad 1\leq i\leq K.
\]
Then, by \eqref{maininclW}, 
\[
\Tau_*=\Tau_1=\bigcap_{\xi\in\S }\Tau_{1,\xi}\subset\bigcap_{i=1}^K \Tau_{1,\xi_i}\subset
\bigcap_{i=1}^K \Tauout_{\xi_i}\subset M\subset\Tauout,
\]
and, therefore,
\begin{equation}\label{complincl}
    \X\setminus \Tauout \subset \bigcup_{i=1}^K (\X\setminus\Tauout_{\xi_i}).
\end{equation}
Denote
 \[
  \nu_{i} :=\la_*(\xi_i) \, \dist (\Tau_{1,\xi_i},\X\setminus\Tauout_{\xi_i})=\la_*(\xi_i)\frac{\eps}{2}, \quad 1\leq i\leq K.
\]
Then, by Theorem~\ref{thm:decayoutsidedirectional} and \eqref{complincl}, one gets, for any $t>0$,
\[
  \esssup_{x\in t\Tauin} u(x,t)=\esssup_{x\notin t\Tauout} u(x,t) \leq \max_{1\leq i\leq K}\esssup_{x\notin t\Tauout_{\xi_i}} u(x,t)\leq D e^{-\nu t},
\]
with $\nu:=\min\{\nu_{i}\mid 1\leq i\leq K\}$, $D:=\max\{\lVert u_0\rVert_{\la_*(\xi_i),\xi_i}\mid 1\leq i\leq K\}$.
\end{proof}

\subsection{Convergence to $\theta$}\label{subsec:convtotheta}

We proof, at first, item 3 of Theorem~\ref{thm:combi} for uniformly continuous functions. Namely, we assume that $u_0\in\Utheta\cap\Buc$, $u_0\not\equiv0$, cf.~\eqref{eq:defUtheta}, and we will prove, under assumptions
\eqref{as:beta}--\eqref{as:nondegglobalmod}, that, for any compact set $\Tauin\subset\inter(\Tau_*)=\inter(\Tau_1)$,
\begin{equation}\label{convtotheta}
    \lim_{t\to\infty} \min_{x\in t\Tauin} u(x,t)=\theta.
  \end{equation}
To do this, in Proposition~\ref{prop:conv_to_theta_cont_time}, we apply results of \cite{Wei1982a} for discrete time, to prove \eqref{convtotheta} for continuous time, provided that $u_0$ is separated from $0$ on a large enough set. Then we will use the hair-trigger effect (Theorem~\ref{thm:hair-trigger}), which implies that $u(x,\tau)$ is separated from $0$ on an arbitrary large set (shifted by~$\tau\m$) for big enough $\tau>0$. Combining these results, we will get \eqref{convtotheta} for an arbitrary $u_0\in\Utheta\cap\Buc$, $u_0\not\equiv0$. Finally, by the comparison principle, we will get the third item  of Theorem~\ref{thm:combi} for $u_0\in\Ltheta$.

We start with the following Weinberger's result (rephrased in our settings).
Note that, under \eqref{as:beta}--\eqref{as:nondegglobalmod}, $\Tau_T\neq\emptyset$, $T>0$. Indeed, if there exists $\xi\in\S$, such that \eqref{as:expintdir} holds, then the result above follows from Proposition~\ref{prop:verynew} and \eqref{taut=ttau1-dir}. Otherwise, $\Tau_*=\X$ and \eqref{taut=ttau1-dir} yields the statement.

\begin{lemma}[{cf.~\cite[Theorem~6.2]{Wei1982a}}] \label{lem:conv_to_theta-1}
Let \eqref{as:beta}--\eqref{as:nondegglobalmod} hold.
Let $u_0\in \Utheta$ and $T>0$ be arbitrary, and $Q_T$ be given by \eqref{def:Q_T}  (in particular, $Q_T$ satisfies the properties \ref{eq:QBtheta_subset_Btheta}--\ref{prop:Q_cont} of Theorem~\ref{thm:Qholds}). Define
\begin{equation}\label{uniterarion}
u_{n+1}(x):=(Q_Tu_n)(x), \quad n\geq0.
\end{equation}
Then, for any compact set $\Tauin_T\subset\inter(\Tau_T)$ and for any $\sigma\in(0,\theta)$, one can choose a radius $r_\sigma=r_\sigma(Q_T,\Tauin_T)$, such that
\begin{equation}\label{initcondissepfrom0}
  u_0(x)\geq\sigma, \quad x\in B_{r_\sigma}(0),
\end{equation}
implies
\begin{equation}\label{liminfconv}
  \lim_{n\rightarrow\infty}\min\limits _{x\in n\Tauin_T}u_{n}(x)=\theta.
\end{equation}
\end{lemma}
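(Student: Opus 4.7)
The plan is to reduce the statement to a direct application of \cite[Theorem~6.2]{Wei1982a}. Since the operator $Q_T$ is fixed and the iteration \eqref{uniterarion} is the discrete-time recursion studied in \cite{Wei1982a}, everything boils down to checking that the hypotheses of Weinberger's theorem are satisfied in our setting, and then identifying the resulting ``asymptotic spreading set'' with $\Tau_T$ as defined in \eqref{eq:TauT}.

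First, I would verify Weinberger's abstract hypotheses on the recursion operator $Q_T$. By Theorem~\ref{thm:Qholds}, $Q_T$ maps $\Utheta$ into itself \ref{eq:QBtheta_subset_Btheta}, commutes with translations \ref{prop:QTy=TyQ}, fixes $0$ and $\theta$ while satisfying the strict-inequality property $Q_T r > r$ for every constant $r \in (0,\theta)$ \ref{prop:Ql_gr_l}, is order-preserving \ref{prop:Q_preserves_order}, and is continuous with respect to locally uniform convergence \ref{prop:Q_cont}. These are exactly the axioms on which \cite[Theorem~6.2]{Wei1982a} is built.

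Next, I would match the geometric objects. The directional spreading speeds $c_T^*(\xi)$ constructed in Subsection~\ref{subsec:Weinberger} are precisely Weinberger's speeds associated with $Q_T$, so the set $\Tau_T$ in \eqref{eq:TauT} coincides with the set produced by Weinberger's construction. By the comment preceding the lemma, $\Tau_T \neq \emptyset$: either $\Tau_* = \X$ (when \eqref{as:expintdir} fails in every direction) and then $\Tau_T = T\X = \X$ by \eqref{taut=ttau1-dir}, or else Proposition~\ref{prop:verynew} together with \eqref{taut=ttau1-dir} gives $T\m \in \inter(\Tau_T)$. Thus the non-degeneracy hypothesis needed in Weinberger's theorem is in force.

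The conclusion of \cite[Theorem~6.2]{Wei1982a} then reads: for any $\sigma \in (0,\theta)$ and any $\eps>0$ there exists a radius $r_\sigma$ such that, whenever \eqref{initcondissepfrom0} holds, the iterates satisfy
\begin{equation*}
\liminf_{n\to\infty}\ \min_{x\in n(1-\eps)\Tau_T} u_n(x) = \theta.
\end{equation*}
To deduce \eqref{liminfconv}, given a compact set $\Tauin_T \subset \inter(\Tau_T)$, I would pick $\eps=\eps(\Tauin_T)>0$ small enough so that $\Tauin_T \subset (1-\eps)\inter(\Tau_T)$; this is possible since $\inter(\Tau_T)$ is a convex open set containing $\Tauin_T$ at positive distance from its boundary, cf.\ Proposition~\ref{prop:verynew}. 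Then $n\Tauin_T \subset n(1-\eps)\Tau_T$, the bound $u_n \leq \theta$ holds on $\X$ by \ref{eq:QBtheta_subset_Btheta}, and \eqref{liminfconv} follows. The main delicate point is the passage from the cone-like ``approximations from below'' employed in Weinberger's proof to the geometry of an arbitrary compact $\Tauin_T$ in $\inter(\Tau_T)$; fortunately, this is absorbed by the convexity of $\Tau_T$ (so that $\Tauin_T \Subset (1-\eps)\Tau_T$ for some $\eps>0$), so no additional work beyond citing \cite[Theorem~6.2]{Wei1982a} is required.
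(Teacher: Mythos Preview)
Your proposal is correct and matches the paper's approach: the paper gives no separate proof of this lemma, treating it simply as a restatement of \cite[Theorem~6.2]{Wei1982a} once the properties \ref{eq:QBtheta_subset_Btheta}--\ref{prop:Q_cont} of Theorem~\ref{thm:Qholds} have been verified and the set $\Tau_T$ identified with Weinberger's spreading set. One cosmetic point: Weinberger's Theorem~6.2 is already formulated for an arbitrary compact subset of $\inter(\Tau_T)$, so your detour through $(1-\eps)\Tau_T$ is unnecessary (though your scaling argument is valid).
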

\begin{remark}\label{justAradius}
By the proof of \cite[Theorem~6.2]{Wei1982a}, the radius $r_\sigma(Q_T,\Tauin_T)$ is not defined uniquely. In the sequel, $r_\sigma(Q_T,\Tauin_T)$ means just a~radius which fulfills the assertion of Lemma~\ref{lem:conv_to_theta-1} for the chosen $Q_T$ and $\Tauin_T$, rather than a~function of~$Q_T$~and $\Tauin_T$.
\end{remark}
\begin{remark}
It is worth noting, that, by \eqref{def:Q_T} and the uniqueness of the solution to \eqref{eq:basic}, the iteration \eqref{uniterarion} is just given by
\begin{equation}\label{realiteration}
  u_{n}(x)=u(x,nT), \quad x\in\X, n\in\N\cup\{0\}.
\end{equation}
Therefore, \eqref{liminfconv} with $T=1$ yields \eqref{convtotheta}, for $\N\ni t\to\infty$, namely,
\begin{equation}\label{convtothetanatural}
  \lim_{n\to\infty}\min_{x\in n\Tauin}u(x,n)=\theta,
\end{equation}
provided that \eqref{initcondissepfrom0} holds with $r_\sigma=r_\sigma(Q_1,\Tauin)$, $\Tauin\subset \inter(\Tau_1)$.
\end{remark}

\begin{lemma}
Let \eqref{as:beta}--\eqref{as:nondegglobalmod} hold. Fix a $\sigma\in(0,\theta)$ and a compact set $\Tauin\subset \inter(\Tau_1)$. Let $u_0\in\Utheta$ be such that $u_0(x)\geq\sigma$, $x\in B_{r_{\sigma}(Q_{1},\Tauin)}(0)$. Then, for any $k\in\N$,
\begin{equation}
\lim_{n\to\infty}\min_{x\in \frac{n}{k}\Tauin}u\Bigl(x,\dfrac{n}{k}\Bigr)=\theta.
\label{eq:rsigma1_geq_rsigma05:i}
\end{equation}
\end{lemma}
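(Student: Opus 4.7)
The strategy is to reduce the claim to Lemma~\ref{lem:conv_to_theta-1} applied with the finer time-step $T=1/k$. The radius that lemma requires for $T=1/k$ is, in general, not the same as (and may be larger than) the radius $r_\sigma(Q_1,\Tauin)$ provided by the hypothesis. To bridge this gap we first use the hair-trigger effect (Theorem~\ref{thm:hair-trigger}) to wait until the solution is separated from zero on a ball of the required radius (centered at $t\m$), and then translate the solution in space so as to place this ball at the origin.

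First, since $\Tauin$ is compact with $\Tauin\subset\inter(\Tau_1)$, fix a compact set $\Tauin'$ satisfying $\Tauin\subset\inter(\Tauin')\subset\Tauin'\subset\inter(\Tau_1)$; by~\eqref{TauT=TTau1}, $\frac{1}{k}\Tauin'\subset\inter(\Tau_{1/k})$. Fix an arbitrary $\sigma'\in(0,\theta)$ and set $R:=r_{\sigma'}(Q_{1/k},\frac{1}{k}\Tauin')$, the radius provided by Lemma~\ref{lem:conv_to_theta-1} with $T=1/k$. Since the hypothesis ensures $u_0\geq\sigma>0$ on the (non-degenerate) ball $B_{r_\sigma(Q_1,\Tauin)}(0)$, Theorem~\ref{thm:hair-trigger} applies with $K=\overline{B_R(0)}$ and yields $u(\cdot,t)\geq\sigma'$ a.e.\ on $B_R(t\m)$ for all sufficiently large $t$; pick $n_0\in\N$ so that $t_0:=n_0/k$ is such a time.

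By translation invariance and time-autonomy of~\eqref{eq:basic}, the function $v(x,t):=u(x+t_0\m,t+t_0)$ is again a classical solution to~\eqref{eq:basic}, and $v(\cdot,0)=u(\cdot+t_0\m,t_0)\in\Utheta$ (by~\ref{eq:QBtheta_subset_Btheta} of Theorem~\ref{thm:Qholds}) satisfies $v(\cdot,0)\geq\sigma'$ on $B_R(0)$. Hence Lemma~\ref{lem:conv_to_theta-1} applied to $v$ with $T=1/k$, compact $\frac{1}{k}\Tauin'\subset\inter(\Tau_{1/k})$, value $\sigma'$, and radius $R$ yields $\lim_{n\to\infty}\min_{x\in\frac{n}{k}\Tauin'}v(x,n/k)=\theta$; unwrapping $v$ and reindexing $n'=n+n_0$, this reads
\[
\lim_{n'\to\infty}\min_{y\in \frac{n'-n_0}{k}\Tauin'+\frac{n_0}{k}\m} u\Bigl(y,\dfrac{n'}{k}\Bigr)=\theta.
\]

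Finally, a short geometric comparison converts this to the desired statement: for $z\in\Tauin$ and $n'>n_0$, the point
\[
w_{n'}:=\frac{n'z-n_0\m}{n'-n_0}=z+\frac{n_0(z-\m)}{n'-n_0}
\]
satisfies $|w_{n'}-z|\leq \frac{n_0}{n'-n_0}\sup_{z\in\Tauin}|z-\m|\to0$ uniformly in $z\in\Tauin$, so by compactness of $\Tauin$ and $\Tauin\subset\inter(\Tauin')$, one has $w_{n'}\in\Tauin'$ for all sufficiently large $n'$; equivalently, $\frac{n'}{k}\Tauin\subset\frac{n'-n_0}{k}\Tauin'+\frac{n_0}{k}\m$ for all large $n'$. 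Combined with the above display and $u\leq\theta$, this gives~\eqref{eq:rsigma1_geq_rsigma05:i}. The main obstacle is precisely the radius mismatch: the hypothesis controls $u_0$ on a ball of a fixed radius calibrated to $T=1$, whereas Lemma~\ref{lem:conv_to_theta-1} at step $T=1/k$ may demand a larger radius. The hair-trigger warm-up together with the spatial shift by $t_0\m$ (which also forces the slight enlargement $\Tauin\rightsquigarrow\Tauin'$) is exactly what reconciles them.
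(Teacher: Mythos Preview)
Your argument is correct. The route, however, differs from the paper's in a substantive way.

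The paper's proof stays entirely within Weinberger's scheme: it first applies Lemma~\ref{lem:conv_to_theta-1} with $T=1$ (using the hypothesis directly) to obtain integer-time convergence $\min_{x\in n\Tauin}u(x,n)\to\theta$, and uses this to find an integer time $N$ at which $u(\cdot,N)\ge\sigma$ on a set $N\Tauin$ large enough to contain $B_{r_\sigma(Q_{1/k},\,\frac1k\tilde{\Tauin})}(0)$; then Lemma~\ref{lem:conv_to_theta-1} is re-applied with $T=1/k$ and initial datum $u(\cdot,N)$, with no spatial shift. You instead invoke the hair-trigger effect (Theorem~\ref{thm:hair-trigger}) to warm up the solution, which forces the spatial translation by $t_0\m$ and the subsequent geometric absorption of $\frac{n'}{k}\Tauin$ into $\frac{n'-n_0}{k}\Tauin'+\frac{n_0}{k}\m$.

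What each approach buys: the paper's argument is more self-contained (only Lemma~\ref{lem:conv_to_theta-1} is used) but implicitly relies on $B_{r_\sigma(Q_{1/k},\,\frac1k\tilde{\Tauin})}(0)\subset N\Tauin$ for some $N$, which tacitly presupposes $0\in\inter(\Tauin)$; in the general setting of the lemma (where the origin may lie outside $\Tau_1$, cf.\ Figure~\ref{fig:f1}), this step needs an additional justification. Your hair-trigger-plus-shift argument avoids that issue entirely, at the cost of importing Theorem~\ref{thm:hair-trigger} and front-loading the $\m$-shift that the paper performs only later in the proof of Proposition~\ref{prop:combi-3}. The final geometric absorption step is essentially the same in both proofs.
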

\begin{proof}
Since $\Tauin\subset\inter(\Tau_1)$, one can choose a compact
set $\tilde{\Tauin}\subset\inter(\Tau_1)$ such that
\begin{equation}\label{eq:rsigma1_geq_rsigma05:ii}
\Tauin\subset\inter(\tilde{\Tauin}).
\end{equation}
By \eqref{realiteration} and Lemma \ref{lem:conv_to_theta-1} (with $T=1$), the assumption
$u_0(x)\geq\sigma$, $x\in B_{r_{\sigma}(Q_{1},\Tauin)}(0)$ implies \eqref{convtothetanatural}. Fix $k\in\N$, take $p=\frac{1}{k}$; then choose and fix the radius $r_{\sigma}\bigl(Q_{p},p\tilde{\Tauin}\bigr)$. By~\eqref{convtothetanatural}, there exists an~$N=N(k)\in\N$, such that
\begin{gather*}
u(x,N)\geq\sigma,\quad x\in N\Tauin, \\
 B_{r_{\sigma}(Q_{p},p\tilde{\Tauin})}(0)\subset N\Tauin.
\end{gather*}
Apply now Lemma~\ref{lem:conv_to_theta-1}, with $u_0(x)=u(x,N)$, $x\in\X$, $T=p$, and
\[
\Tauin_T=\Tauin_p:=p\tilde{\Tauin}\subset p\,\inter(\Tau_1)=\inter(\Tau_p),
\]
as, by \eqref{TauT=TTau1}, $p\Tau_1=\Tau_p$. We will get then
\begin{equation}\label{eq:rsigma1_geq_rsigma05:iii}
\lim_{n\to\infty}\min_{x\in np\tilde{\Tauin}}u(x,N+np)=\theta.
\end{equation}
By (\ref{eq:rsigma1_geq_rsigma05:ii}), there exists $M\in\N$ such
that one has
\begin{equation}\label{inclwithn}
  \Bigl(\frac{N}{n}+p\Bigr)\Tauin\subset p\tilde{\Tauin},\quad n\geq M.
\end{equation}
Therefore, by \eqref{inclwithn}, one gets, for $n\geq M$,
\begin{align}\notag
\min_{x\in np\tilde{\Tauin}}u(x,N+np)&\leq
\min_{x\in n(\frac{N}{n}+p)\Tauin}u(x,N+np)\\&=
\min_{x\in(Nk+n)\frac{1}{k}\Tauin}u\Bigl(x,(Nk+n)\frac{1}{k}\Bigr)\leq\theta.\label{eq:rsigma1_geq_rsigma05:iv}
\end{align}
By \eqref{eq:rsigma1_geq_rsigma05:iii} and \eqref{eq:rsigma1_geq_rsigma05:iv}, one gets the statement.
\end{proof}

Now, one can prove \eqref{convtotheta}, under an assumption on the initial condition.
\begin{proposition}\label{prop:conv_to_theta_cont_time}
Let \eqref{as:beta}--\eqref{as:nondegglobalmod} hold. Fix a $\sigma\in(0,\theta)$ and a compact set $\Tauin\subset \inter(\Tau_1)$. Let $u_0\in\Utheta\cap\Buc$ be such that $u_0(x)\geq\sigma$, $x\in B_{r_{\sigma}(Q_{1},\Tauin)}(0)$, and $u\in\Xinf$ be the corresponding solution to \eqref{eq:basic}. Then \eqref{convtotheta} holds.
\end{proposition}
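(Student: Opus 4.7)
My plan is to pass from the convergence along the rational times $t=n/k$ supplied by the preceding lemma to the continuous-time convergence \eqref{convtotheta}, by combining a uniform-in-$t$ time Lipschitz estimate for $u$ in $E$ with a geometric enlargement of $\Tauin$ which absorbs the small scaling factor $t/(n/k)\in[1,1+1/n)$ when $t\in[n/k,(n+1)/k)$.

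First, I would observe that, since $0\le u\le\theta$ and $a^\pm$ are probability densities, the equation \eqref{eq:basic} directly yields $|\partial_t u(x,t)|\le M$ for a.a.\ $x$ and all $t>0$, with, for instance, $M:=\theta(\varkappa^++m+\theta\varkappa^-)$; in particular $\|u(\cdot,t_1)-u(\cdot,t_2)\|_E\le M|t_1-t_2|$ for all $t_1,t_2\ge 0$. Next, since $\Tauin$ is a compact subset of $\inter(\Tau_1)$, I would choose $\delta>0$ so that the closed $\delta$-neighbourhood $\tilde{\Tauin}:=\{x\in\X\mid \dist(x,\Tauin)\le\delta\}$ still lies in $\inter(\Tau_1)$. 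Setting $R:=\max_{y\in\Tauin}|y|$, one easily checks that $\lambda\Tauin\subset\tilde{\Tauin}$ whenever $\lambda\in[1,1+\delta/R]$, since $|\lambda y-y|\le(\lambda-1)R\le\delta$ for every $y\in\Tauin$.

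I would then apply the preceding lemma to $\tilde{\Tauin}$ rather than $\Tauin$: using the flexibility in the choice of radius recorded in Remark~\ref{justAradius}, the $r_\sigma(Q_1,\Tauin)$ appearing in the hypothesis may be taken large enough to also serve as $r_\sigma(Q_1,\tilde{\Tauin})$, and thus the lower bound on $u_0$ persists. This yields, for every fixed $k\in\N$, $\lim_{n\to\infty}\min_{x\in(n/k)\tilde{\Tauin}}u(x,n/k)=\theta$. Now, given $\eps>0$, I would fix $k\in\N$ with $M/k<\eps/2$ and then $N\in\N$ so that, for $n\ge N$, the above minimum is at least $\theta-\eps/2$ and $1/n\le\delta/R$. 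For $t\ge(N+1)/k$ and $n:=\lfloor tk\rfloor\ge N$, the scaling factor $tk/n\in[1,1+\delta/R]$ produces the inclusion $t\Tauin=(n/k)(tk/n)\Tauin\subset(n/k)\tilde{\Tauin}$, and the Lipschitz estimate gives
\[
\min_{x\in t\Tauin}u(x,t)\ge\min_{x\in(n/k)\tilde{\Tauin}}u(x,n/k)-M/k\ge\theta-\eps,
\]
which is the required convergence.

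The main delicate point will be the bookkeeping around the non-unique radius $r_\sigma$ (the very reason for Remark~\ref{justAradius}): one must ensure that the hypothesis on $u_0$ transfers from $\Tauin$ to the slightly larger $\tilde{\Tauin}$. A secondary implicit technical point inherited from the preceding lemma is that the inclusion $B_{r_\sigma(Q_p,p\tilde{\Tauin})}(0)\subset N\Tauin$ used there only holds when the origin lies in the interior of the working compact; if it does not already for $\Tauin$, one enlarges $\tilde{\Tauin}$ further so it contains a neighbourhood of the origin.
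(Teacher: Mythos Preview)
Your argument is correct and yields the same conclusion, but it proceeds differently from the paper's proof. The paper argues by contradiction and invokes the joint space--time uniform continuity of $u$ on $\X\times\R_+$ (quoted from \cite[Proposition~5.1]{FT2017a}, and this is where the hypothesis $u_0\in\Buc$ is actually used): given a hypothetical bad sequence $(x_N,t_N)$ with $x_N=t_Ny_N$, $y_N\in\Tauin$, it compares $u(t_Ny_N,t_N)$ with $u\bigl(\tfrac{n}{k}y_N,\tfrac{n}{k}\bigr)$, where both the spatial and the temporal arguments move, and then contradicts \eqref{eq:rsigma1_geq_rsigma05:i} applied to $\Tauin$ itself. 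In particular the paper never enlarges $\Tauin$, so the radius $r_\sigma(Q_1,\Tauin)$ appearing in the hypothesis is used as is.

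Your route decouples the two variations: the temporal one by the elementary $E$-Lipschitz bound $\|\partial_t u\|_E\le M$ read off directly from \eqref{eq:basic}, and the spatial one by the inclusion $t\Tauin\subset\tfrac{n}{k}\tilde{\Tauin}$ coming from the geometric enlargement $\tilde{\Tauin}$. This is more self-contained (no appeal to the external uniform-continuity result) and in fact does not use the $\Buc$ hypothesis beyond what is needed to make sense of the minimum. The price you pay is exactly the point you flag: you must apply \eqref{eq:rsigma1_geq_rsigma05:i} with $\tilde{\Tauin}$, and hence interpret the radius in the statement as one that also serves for $\tilde{\Tauin}$; this is legitimate in view of Remark~\ref{justAradius}, since $\tilde{\Tauin}$ is determined by $\Tauin$ alone. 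Your side remark about the inclusion $B_{r_\sigma(Q_p,p\tilde{\Tauin})}(0)\subset N\Tauin$ in the preceding lemma requiring $0\in\inter(\Tauin)$ is a fair observation about that lemma, but it is orthogonal to the present proof and affects both approaches equally.
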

\begin{proof}
Suppose \eqref{convtotheta} were false.
Then, there exist $\eps>0$ and a sequence $t_N\to\infty$, such that
$\min\limits_{x\in t_N\Tauin}u(x,t_N)<\theta-\eps$, $n\in\N$.
Since $t_N\Tauin$ is a compact set and, by \ref{eq:QBtheta_subset_Btheta} in Theorem~\ref{thm:Qholds}, 
\begin{equation}\label{eq:newdop}
   u(\cdot,t)\in\Utheta\cap\Buc, \quad t\geq0,
\end{equation}
there exists $x_N\in t_N\Tauin$, such that
\begin{equation}\label{eq:conv_to_theta:i}
u(x_N,t_N)<\theta-\eps, \quad n\in\N.
\end{equation}
Next, by \eqref{eq:newdop} and~\cite[Proposition 5.1]{FT2017a}, there exists a $\delta=\delta(\eps)>0$
such that, for all $x',x''\in\X$ and for all $t', t''>0$, with $|x'-x''|+|t'-t''|<\delta$,
one has
\begin{equation}\label{eq:conv_to_theta:ii}
|u(x',t')-u(x'',t'')|<\dfrac{\eps}{2}.
\end{equation}
Since $\Tauin$ is a compact, $p(\Tauin):=\sup\limits_{x\in\Tauin}\lVert x\rVert<\infty$. Choose $k\in\N$, such that $\frac{1}{k}<\frac{\delta}{1+p(\Tauin)}$.
By~\eqref{eq:rsigma1_geq_rsigma05:i}, there exists $M(k)\in\N$, such that, for all $n\geq M(k)$, \begin{equation}\label{conseqofle}
  u\Bigl(x,\frac{n}{k}\Bigr)>\theta-\frac{\eps}{2}, \quad x\in \frac{n}{k}\Tauin.
\end{equation}
Choose $N>N_0$ big enough to ensure $t_N>\frac{M(k)}{k}$.
Then, there exists $n\geq M(k)$, such that $t_N\in\bigl[\frac{n}{k},\frac{n+1}{k}\bigr)$. Hence
\begin{equation}\label{sadas}
  \Bigl\lvert t_N-\frac{n}{k}\Bigr\rvert <\frac{1}{k}<\frac{\delta}{1+p(\Tauin)}.
\end{equation}
Next, for the chosen $N$, there exists $y_N\in\Tauin$, such that $x_N=t_N y_N$.
Set $t'=t_N$, $t''=\frac{n}{k}$, $x'=x_N=t_N y_N$, and $x''=\frac{n}{k}y_N$.
Then, by \eqref{sadas},
\[
|t'-t''|+|x'-x''|=\Bigl\lvert t_N-\frac{n}{k}\Bigr\rvert\bigl(1+|y_N|\bigr)<\delta.
\]
Therefore, one can apply \eqref{eq:conv_to_theta:ii}. Combining this with \eqref{eq:conv_to_theta:i}, one gets
\[
u\Bigl(\frac{n}{k}y_N,\frac{n}{k}\Bigr)=
u\Bigl(\frac{n}{k}y_N,\frac{n}{k}\Bigr)-u(t_N y_N,t_N)+
u(x_N,t_N)<\frac{\eps}{2}+\theta-\eps=\theta-\frac{\eps}{2},
\]
that contradicts \eqref{conseqofle}, as $\frac{n}{k} y_N\in\frac{n}{k}\Tauin$.
Hence the statement is proved.
\end{proof}

Now, we are ready to prove the third item of Theorem~\ref{thm:combi}.
\begin{proposition}\label{prop:combi-3}
Let the conditions \eqref{as:beta}--\eqref{as:nondegglobalmod} hold. Let $u_0\in E_\theta^+$ be such that there exist $x_0\in\X$, $\eta>0$, $r>0$, with $u_0(x)\geq\eta$ for a.a.~$x\in B_r(x_0)$; and let $u\in\x_\infty$ be the corresponding classical solution to \eqref{eq:basic} on $\R_+$.
Then, for any compact set $\Tauin\subset\inter(\Tau_*)$, the convergence \eqref{eq:globalbelowresult} holds.
\end{proposition}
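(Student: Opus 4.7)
The plan is to derive Proposition~\ref{prop:combi-3} from Proposition~\ref{prop:conv_to_theta_cont_time} by combining the hair-trigger effect (Theorem~\ref{thm:hair-trigger}), the translation property~\ref{prop:QTy=TyQ} of the semi-flow, and the comparison principle (Proposition~\ref{prop:fullcomp}). The first step is to reduce to the case of a uniformly continuous initial datum: pick any continuous bump $v_0\colon\X\to[0,\eta]$ supported in $B_r(x_0)$ with $v_0\equiv \eta/2$ on $B_{r/2}(x_0)$. Then $v_0\in\Utheta\cap\Buc$ and $0\leq v_0\leq u_0$ a.e., so by Proposition~\ref{prop:fullcomp} the corresponding solution $v$ satisfies $0\leq v(x,t)\leq u(x,t)$ a.e.\ for each $t\geq0$; it is therefore enough to prove \eqref{eq:globalbelowresult} with $v$ in place of~$u$.

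Next, fix $\sigma\in(0,\theta)$ and a compact $\Tauin\subset\inter(\Tau_*)$. Since $\inter(\Tau_*)$ is open and $\Tauin$ is compact, I enlarge slightly to a compact $\tilde\Tauin\subset\inter(\Tau_*)$ with $\Tauin\subset\inter(\tilde\Tauin)$; set $R:=r_\sigma(Q_1,\tilde\Tauin)$, cf.\ Lemma~\ref{lem:conv_to_theta-1} and Remark~\ref{justAradius}. Applying Theorem~\ref{thm:hair-trigger} to $v$ with $K=\overline{B_R(0)}$ yields
\[
\lim_{t\to\infty}\essinf_{x\in B_R(0)} v(x+t\m,t)=\theta,
\]
so I can choose $\tau>0$ so large that $v(x+\tau\m,\tau)\geq\sigma$ for a.a.\ $x\in B_R(0)$.

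Now I restart and translate. Let $w_0(x):=v(x+\tau\m,\tau)$; by~\ref{eq:QBtheta_subset_Btheta} of Theorem~\ref{thm:Qholds}, $v(\cdot,\tau)\in\Utheta\cap\Buc$, hence so is $w_0$. By the translation invariance~\ref{prop:QTy=TyQ} and the uniqueness of classical solutions, the solution $w$ starting from $w_0$ equals $w(x,s)=v(x+\tau\m,s+\tau)$. Since $w_0\geq\sigma$ on $B_R(0)=B_{r_\sigma(Q_1,\tilde\Tauin)}(0)$, Proposition~\ref{prop:conv_to_theta_cont_time} applied with the compact set $\tilde\Tauin$ gives $\min_{x\in s\tilde\Tauin}w(x,s)\to\theta$ as $s\to\infty$; equivalently,
\[
\lim_{s\to\infty}\essinf_{y\in s\tilde\Tauin+\tau\m}v(y,s+\tau)=\theta.
\]

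To transfer this to $t\Tauin$, write $t=s+\tau$ and note that for every $x\in\Tauin$ one has $\frac{t}{t-\tau}x-\frac{\tau}{t-\tau}\m\to x$ uniformly on the compact set $\Tauin$. Hence for $t$ large enough the set $\bigl\{\frac{t}{t-\tau}x-\frac{\tau}{t-\tau}\m\bigm\vert x\in\Tauin\bigr\}$ lies in $\inter(\tilde\Tauin)$; multiplying by $t-\tau$ gives the inclusion $t\Tauin\subset(t-\tau)\tilde\Tauin+\tau\m$. Combining this with the previous display and taking the essinf yields $\essinf_{x\in t\Tauin}v(x,t)\to\theta$, and the comparison $v\leq u$ completes the proof. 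The main obstacle is precisely the bookkeeping in this final step: the hair-trigger effect only delivers a lower bound on sets shifted by $t\m$, while $t\Tauin$ fans out linearly from the origin, so the additive shift $\tau\m$ must be absorbed inside a slight enlargement of $\Tauin$; this is possible precisely because, by Proposition~\ref{prop:verynew}, both $\m$ and a neighborhood of $\Tauin$ lie in the open set $\inter(\Tau_*)$.
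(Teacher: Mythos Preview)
Your proof is correct and follows essentially the same route as the paper: reduce to a $\Buc$ initial datum via the comparison principle, use the hair-trigger effect to make the (shifted) solution exceed $\sigma$ on a ball of radius $r_\sigma(Q_1,\tilde\Tauin)$, restart and apply Proposition~\ref{prop:conv_to_theta_cont_time} to the translated solution, and finally absorb the fixed shift $\tau\m$ by the inclusion $t\Tauin\subset(t-\tau)\tilde\Tauin+\tau\m$ via the very same computation $\frac{tx-\tau\m}{t-\tau}\to x$. The only cosmetic differences are that the paper performs the $\Buc$ reduction at the end rather than the beginning, and that your closing remark invoking Proposition~\ref{prop:verynew} is not actually needed for the inclusion argument (the uniform convergence $\frac{t}{t-\tau}x-\frac{\tau}{t-\tau}\m\to x$ on the compact $\Tauin$ holds regardless of where $\m$ sits).
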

\begin{proof}
At first, we suppose that $u_0\in\Utheta\cap\Buc$. 
For $u_0\equiv\theta$, the statement is trivial. Hence let $u_0\not\equiv\theta$, $u_0\not\equiv0$. Recall that, \eqref{as:nondegglobalmod} implies
  \eqref{as:nondegglobal}. 

Let $\Tauin\subset\inter(\Tau_1)$ be an arbitrary compact set. It is well-known, that the distance between disjoint compact and closed sets is positive; in particular, one can consider the compact $\Tauin$ and the closure of $\X\setminus\Tau_1$. Therefore, there exists a compact set $\K\subset\inter(\Tau_1)$, such that $\Tauin\subset\inter(\K)$. Let $\delta_0>0$ be the distance between $\Tauin$ and the closure of $\X\setminus\K$.

Choose any $\sigma\in(0,\theta)$ and consider a radius $r_\sigma=r_\sigma(Q_1,\K)$ which fulfills Proposition~\ref{prop:conv_to_theta_cont_time}, cf.~Remark~\ref{justAradius}. By~Theorem~\ref{thm:hair-trigger}, there exists $t_1>0$, such that
\begin{equation}\label{eq:dop}
    u(x+t_1\m,t_1)\geq \sigma, \quad |x|\leq r_\sigma.
\end{equation}
We apply now Proposition~\ref{prop:conv_to_theta_cont_time} (with $\Tauin$ replaced by $\K$) to the equation \eqref{eq:basic} with
\[
  u_0(x):=u(x+t_1\m,t_1), \quad x\in\X
\]
By~\eqref{convtotheta} and the uniqueness arguments, we will have then
\begin{equation}\label{convtothetaunderassumtion231}
  \lim_{t\to\infty}\min_{x\in t\K}u(x+t_1\m,t+t_1)=\theta.
\end{equation}

By \eqref{convtothetaunderassumtion231}, for any $\eps>0$, there exists $t_2>0$ such that, for all $t>t_1+t_2=:t_3>0$ and for all $y\in\K$,
\begin{equation}\label{sasasfsfwr}
 u\bigl((t-t_1)y+t_1\m,t\bigr)>\theta-\eps
\end{equation}
Without loss of generality we can assume that $t_2$ is big enough to ensure
\begin{equation}\label{bigt5}
  t_1\max\limits_{x\in\Tauin}|x|+t_1|\m|<\delta_0 t_2.
\end{equation}
Then, for any $x\in\Tauin$ and for any $t>t_3$, the vector
\[
y(x,t):=\frac{tx-t_1\m}{t-t_1}
\]
is such that
\[
\lvert y(x,t)-x\rvert
=\frac{\bigl\lvert t_1x-t_1\m\bigr\rvert}{t-t_1}
<\delta_0,
\]
where we used \eqref{bigt5}.
Therefore, $y(x,t)\in\K$, for all $x\in\Tauin$ and $t>t_3$, and hence \eqref{sasasfsfwr}, being applied for any such $y(x,t)$, yields
\[
  u(tx,t)>\theta-\eps, \quad x\in\Tauin, \ t>t_3, 
\]
that fulfills the proof of \eqref{convtotheta} for $u_0\in\Utheta\cap\Buc$. 

Let now $u_0\in\Ltheta$ satisfies the assumptions.
Then there exists a function $v_0\in\Utheta\cap\Buc\subset\Ltheta$, $v_0\not\equiv0$, such that $u_0(x)\geq v_0(x)$, for a.a.~$x\in\X$. Next, by Proposition~\ref{prop:fullcomp}, $u(x,t)\geq v(x,t)$, for a.a.~$x\in\X$, and for all $t\geq0$, where $v\in\x_\infty$ is the corresponding to $v_0$ solution to \eqref{eq:basic}. Then, by the proved above, we will get \eqref{convtotheta} for $v$, with the same $\Tau_1$, cf.~\ref{eq:QBtheta_subset_Btheta} of Theorem~\ref{thm:Qholds}. As a result, the evident inequality
  \[
  \min\limits_{x\in t\Tauin}v(x,t)\leq
  \essinf\limits_{x\in t\Tauin}u(x,t)\leq\theta
  \]
implies \eqref{eq:globalbelowresult}. The statement is fully proved now.
\end{proof}

Now one can prove Proposition~\ref{prop:infinitespeed}.
\begin{proof}[Proof of Proposition~\ref{prop:infinitespeed}]
The first statement is a direct consequence of the third item in Theorem~\ref{thm:combi}, since \eqref{eq:heavy} implies that $\Tau_*=\X$. To prove the second statement, suppose that, in contrast, for some $\xi\in\S$, $c\in\R$, and $\psi\in\M$, \eqref{eq:deftrw} holds. Then $u_0(x)=\psi(x\cdot\xi)$ satisfies the assumptions of the first statement. Take a compact set $\K\subset\X$, such that $c_1:=\max\limits_{y\in\K}y\cdot\xi >c$. Then \eqref{eq:globalbelowresult} implies
  \begin{align*}
    \theta&=\lim_{t\to\infty} \essinf_{x\in t\K} \psi(x\cdot\xi -ct)=\lim_{t\to\infty} \essinf_{y\in \K} \psi\bigl(t(y\cdot\xi -c)\bigr)\\
    &=
    \lim_{t\to\infty} \psi\bigl(t(c_1 -c)\bigr)=0,
  \end{align*}
  where we used that $\psi$ is decreasing. One gets a contradiction which proves the second statement.
\end{proof}

Another important application of the third item in Theorem~\ref{thm:combi} is that there are not stationary solutions $u\geq0$  to \eqref{eq:basic} (i.e. solutions with $\frac{\partial}{\partial t}u=0$), except $u\equiv0$ and $u\equiv\theta$, provided that the origin belongs to $\inter(\Tau_*)$.

\begin{proposition}\label{uniqstationarysolutions}
Let \eqref{as:beta}--\eqref{as:nondegglobalmod} hold. If $\kl=0$ in \eqref{eq:basic}, we assume, additionally, that there exists $r_0>0$ such that
  \begin{equation}\label{infpos}
    \alpha:=\inf\limits_{|x|\leq r_0} a^-(x)>0.
  \end{equation}
Let also the origin belong to $\inter(\Tau_*)$.
Then there exist only two non-negative stationary solutions to \eqref{eq:basic}
in $E$, namely, $u\equiv0$ and $u\equiv\theta$.
\end{proposition}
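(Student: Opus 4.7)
Let $u\in E$ with $u\geq 0$ a.e.~be a stationary solution to \eqref{eq:basic} and suppose $u\not\equiv 0$; I plan to show $u\equiv\theta$ in three steps.

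\emph{Step 1: $u\leq\theta$ a.e.} Subtract $\kn\theta(a^-\ast u)(x)$ from both sides of the stationary equation $\varkappa^+(a^+\ast u)(x)=u(x)[m+G(u(x))]$ to rewrite it as
\[
\bigl((\varkappa^+a^+-\kn\theta a^-)\ast u\bigr)(x)=mu(x)+\kl u(x)^2+\kn(a^-\ast u)(x)\bigl(u(x)-\theta\bigr).
\]
By \eqref{as:compar} the convolution kernel is non-negative, with total mass $\varkappa^+-\kn\theta=m+\kl\theta$, so the LHS is bounded by $(m+\kl\theta)M$ with $M:=\esssup u$. Moreover, the pointwise quadratic
\[
\kl u(x)^2+\bigl[m+\kn(a^-\ast u)(x)\bigr]u(x)-\varkappa^+(a^+\ast u)(x)=0
\]
expresses $u$ as a locally Lipschitz function of the uniformly continuous convolutions $a^\pm\ast u$ (uniform continuity follows from $\|T_h a-a\|_{L^1}\to 0$), so $u$ itself is uniformly continuous. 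Pick $x_n$ with $u(x_n)\to M$. If $\kl>0$, discarding the non-negative last term (eventually $u(x_n)>\theta$) yields $mM+\kl M^2\leq(m+\kl\theta)M$ in the limit, i.e.~$\kl M(M-\theta)\leq 0$, forcing $M\leq\theta$. If $\kl=0$ (hence $\kn>0$), the LHS is instead bounded by $mM$, and the identity forces $\kn(a^-\ast u)(x_n)(u(x_n)-\theta)\to 0$, so $(a^-\ast u)(x_n)\to 0$ under the hypothesis $M>\theta$. Passing to a locally uniform translation limit $u_\infty(\,\cdot\,)=\lim u(\,\cdot\,+x_n)$ via Arzel\`a--Ascoli (and checking $u_\infty$ satisfies the stationary equation by dominated convergence), one obtains $u_\infty(0)=M$ and $(a^-\ast u_\infty)(0)=0$; by \eqref{infpos} this forces $u_\infty\equiv 0$ a.e.~on $B_{r_0}(0)$, contradicting continuity of $u_\infty$ and $u_\infty(0)=M>0$. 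Hence $u\in E^+_\theta$ in every case.

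\emph{Step 2: a ball of positivity.} Since now $u\leq\theta$, $m+G(u)\leq\varkappa^+$ and the stationary equation rearranges to $u(x)\geq(a^+\ast u)(x)$. Using \eqref{as:nondegglobal},
\[
u(x)\geq \rho\int_{B_\delta(x)}u(y)\,dy =:\rho f(x);
\]
the function $f$ is Lipschitz continuous and, since $u\not\equiv 0$, not identically zero, so $f\geq c>0$ on some open ball $B_r(x_0)$. Therefore $u\geq\rho c>0$ a.e.~on $B_r(x_0)$.

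\emph{Step 3: conclusion.} The stationary $u\in E^+_\theta$ is the unique solution to \eqref{eq:basic} with initial datum $u_0:=u$, which by Step 2 satisfies the hypothesis of the third item of Theorem~\ref{thm:combi}. Hence, for any compact $\Tauin\subset\inter(\Tau_*)$, $\essinf_{x\in t\Tauin}u(x)\to\theta$ as $t\to\infty$. Since $0\in\inter(\Tau_*)$, one may take $\Tauin=\overline{B_\eps(0)}\subset\inter(\Tau_*)$; then $t\Tauin=\overline{B_{t\eps}(0)}$ exhausts every bounded subset of $\X$, yielding $u\geq\theta$ a.e.~on all of $\X$. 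Combined with Step 1 this gives $u\equiv\theta$.

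\emph{Main obstacle.} The technically delicate point is Step 1 in the pure-nonlocal regime $\kl=0$: the algebraic quadratic squeeze that settles the $\kl>0$ case disappears, and one must combine the pointwise positivity assumption \eqref{infpos} of $a^-$ with uniform continuity of $u$ and a translation-compactness argument to convert the vanishing of $(a^-\ast u)$ along a maximising sequence into a contradiction with continuity.
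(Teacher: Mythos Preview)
Your proof is correct and follows the same overall strategy as the paper: first establish $u\in\Buc$ and $u\le\theta$ via the rewriting
\[
\bigl((\varkappa^+a^+-\kn\theta a^-)\ast u\bigr)(x)=mu(x)+\kl u(x)^2+\kn(a^-\ast u)(x)\bigl(u(x)-\theta\bigr),
\]
then invoke the third item of Theorem~\ref{thm:combi} together with $0\in\inter(\Tau_*)$.

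Two minor differences are worth noting. In the delicate subcase $\kl=0$, you pass to a translation limit via Arzel\`a--Ascoli to produce a continuous $u_\infty$ with $u_\infty(0)=M$ and $(a^-\ast u_\infty)(0)=0$, contradicting \eqref{infpos}. The paper instead argues directly: from $(a^-\ast u)(x_n)\le 1/n$ and \eqref{infpos} it extracts, for each $n$, a point $y_n\in B_{r_n}(x_n)$ with $r_n=n^{-1/(2d)}$ and $u(y_n)\le(\alpha\sqrt{n}\,V_d(1))^{-1}$; since $|x_n-y_n|\to0$ while $u(x_n)\to M$ and $u(y_n)\to0$, uniform continuity is violated. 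Both arguments are valid; the paper's is slightly more elementary (no compactness extraction), while yours is a clean instance of the standard translation-compactness technique. Second, your Step~2 is superfluous: having already shown $u\in\Buc$, the hypothesis $u\not\equiv0$ immediately yields a ball on which $u$ is bounded below by a positive constant, so the inequality $u\ge a^+\ast u$ is not needed here. The paper simply applies Theorem~\ref{thm:combi} directly after Step~1.
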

\begin{proof}
Since $\frac{\partial}{\partial t}u=0$, one gets from \eqref{eq:basic} that
\begin{equation}\label{stateq1}
  u(x)=\frac{\pm\sqrt{D(x)} - \bigl(m+B(x)\bigr)} {\kl}, \quad x\in\X,
\end{equation}
where
\begin{align*}
  A(x) &= \varkappa^+ (a^+*u)(x),\ B(x) = \kn(a^-*u)(x),\\
  D(x) &= \bigl( m+B(x) \bigr)^2 + 4\kl A(x) \geq m > 0.
\end{align*}
Then, by \cite[Lemma~2.1]{FKT100-1}, one easily gets that $u\in\Buc$.

Denote $M:=\|u\|=\sup\limits_{x\in\X}u(x)$. We are going to prove now that $M\leq\theta$. On the contrary, suppose that $M>\theta$.
One can rewrite \eqref{stateq1} as follows:
\begin{multline}
  mu(x) + \kl u^2(x) + \kn(a^-*u)(x)(u(x)-\theta)\\
  = (J_\theta*u)(x) \leq M(\varkappa^+{-}\kn\theta), \label{stateq2}
\end{multline}
where 
\[
  J_\theta(x):=\varkappa^+a^+(x)-\theta\kn a^-(x)\geq0,
\]
and hence $\int_\X J_\theta(x)\,dx=\varkappa^+{-}\kn\theta$.

Choose a sequence $x_n\in\X$, $n\in\N$, such that $u(x_n)\to M$, $n\to\infty$. Substitute $x_n$ to the inequality \eqref{stateq2} and pass $n\to\infty$. Since $M>\theta$ and $u\geq0$, one gets then that $(a^-*u)(x_n)\to0$, $n\to\infty$. Passing to a subsequence of $\{x_n\}$ and keeping the same notation, for simplicity, one gets that
\[
(a^-*u)(x_n)\leq\dfrac{1}{n},\ n\geq 1.
\]

For all $n\geq r_0^{-2d}$, set $r_n:=n^{-\frac{1}{2d}}\leq r_0$; then the inequality \eqref{infpos} holds, for any $x\in B_{r_n}(0)$, and hence
\begin{equation}\label{eq:st_sol_inf_est}
\dfrac{1}{n}\geq(a^-*u)(x_n)\geq\alpha(\1_{B_{r_n}(0)}*u)(x_n)\geq \alpha V_d(r_n)\min\limits_{x\in B_{r_n}(x_n)}u(x),
\end{equation}
where $V_d(R)$ is a volume of a sphere with the radius $R>0$ in $\X$. Since $V(r_n)=r_n^d V_d(1)=n^{-\frac{1}{2}}V_d(1)$, we have from \eqref{eq:st_sol_inf_est}, that, for any $n\geq r_0^{-2d}$, there exists $y_n\in B_{r_n}(x_n)$, such that
\[
u(y_n)\leq\dfrac{1}{\alpha\sqrt{n}V_d(1)}.
\]
Thus $u(y_n)\to0$, $n\to\infty$. Recall that $u(x_n)\to M>0$, $n\to\infty$, however, $|x_n-y_n|\leq r_n =n^{-\frac{1}{2d}}$, that may be arbitrary small. This contradicts the fact that $u\in\Buc$.

As a result, $0\leq u(x)\leq \theta=M$, $x\in\X$. Let $u\not\equiv0$. By the third item in Theorem~\ref{thm:combi}, for  any compact set $\Tauin\subset\inter(\Tau_1)$,
  $\min\limits_{x\in t\Tauin} u(x)\to\theta$, $t\to\infty$, as $u(x,t)=u(x)$ now. Since $0\in\inter(\Tau_1)$, the latter convergence is obviously possible for $u\equiv\theta$ only.
\end{proof}

\begin{remark}\label{rem:zero_is_in_front}
It is worth noting that, by \eqref{eq:TauTxi}, \eqref{taut=ttau1-dir}, and \eqref{ct=tc}, the assumption $0\in\inter(\Tau_1)$ implies that $c_*(\xi)\geq0$, for all $\xi\in\S$. It means that all traveling waves in all directions have nonnegative speeds only.
\end{remark}

\section*{Acknowledgments}
Authors gratefully acknowledge the financial support by the DFG through CRC 701 ``Stochastic
Dynamics: Mathematical Theory and Applications'' (DF, YK, PT), the European
Commission under the project STREVCOMS PIRSES-2013-612669 (DF, YK), and the ``Bielefeld Young Researchers'' Fund through the Funding Line Postdocs: ``Career Bridge Doctorate\,--\,Postdoc'' (PT).

\end{document}